\pretocmd{\chapter}{\addtocontents{toc}{\protect\addvspace{15\p@}}}{}{}
\pretocmd{\section}{\addtocontents{toc}{\protect\addvspace{0\p@}}}{}{}
\def\@tocline#1#2#3#4#5#6#7{\relax
  \ifnum #1>\c@tocdepth 
  \else
    \par \addpenalty\@secpenalty\addvspace{#2}%
    \begingroup \hyphenpenalty\@M
    \@ifempty{#4}{%
      \@tempdima\csname r@tocindent\number#1\endcsname\relax
    }{%
      \@tempdima#4\relax
    }%
    \parindent\z@ \leftskip#3\relax \advance\leftskip\@tempdima\relax
    \rightskip\@pnumwidth plus4em \parfillskip-\@pnumwidth
    #5\leavevmode\hskip-\@tempdima
      \ifcase #1
       \or\or \hskip .5em \or \hskip 1em \else \hskip 1.5em \fi%
      #6\nobreak\relax
    \dotfill\hbox to\@pnumwidth{\@tocpagenum{#7}}\par
    \nobreak
    \endgroup
  \fi}
\newcommand{\C}{\mathbb{C}}
\newcommand{\Z}{\mathbb{Z}}
\newcommand{\Q}{\mathbb{Q}}
\newcommand{\N}{\mathbb{N}}
\newcommand{\F}{\mathbb{F}}
\newcommand{\X}{\mathbb{X}}
\newcommand{\Gal}{\operatorname{Gal}}
\newcommand{\rk}{\operatorname{rk}}
\renewcommand{\sc}{\operatorname{sc}}
\renewcommand{\ss}{\operatorname{ss}}
\newcommand{\der}{\operatorname{der}}
\newcommand{\End}{\operatorname{End}}
\newcommand{\GL}{\mathrm{GL}}
\newcommand{\PSL}{\mathrm{PSL}}
\newtheorem{theorem}{Theorem}[subsection]
\newtheorem{lemma}[theorem]{Lemma}
\newtheorem{proposition}[theorem]{Proposition}
\theoremstyle{definition}
\newtheorem{definition}{Definition}
\newtheorem{remark}[theorem]{Remark}
\newenvironment{customthm}[1]
  {\innercustomthm}
  {\endinnercustomthm}
\newenvironment{customcor}[1]
  {\innercustomcor}
  {\endinnercustomcor}
\newenvironment{customrem}[1]
  {\innercustomrem}
  {\endinnercustomrem}
\begin{document}

\title{$\ell$-independence for compatible systems of (mod $\ell$) representations}

\author{Chun Yin Hui}

\email{chunyin.hui@uni.lu, pslnfq@gmail.com}

\address{University of Luxembourg,
Mathematics Research Unit,
6 rue Richard Coudenhove-Kalergi,
L-1359 Luxembourg}

\subjclass{11F80, 14F20, 20D05}
\keywords{Galois representations, $\ell$-independence, big Galois image, \'etale cohomology}
\thanks{Major revisions of the paper were done when I was a postdoctoral fellow at The Hebrew University of Jerusalem supported by Aner Shalev's ERC Advanced Grant no. 247034.}

\begin{abstract} Let $K$ be a number field. For any system
of semisimple mod $\ell$ Galois representations
 $\{\phi_\ell:\mathrm{Gal}(\bar{\Q}/K)\rightarrow\mathrm{GL}_N(\mathbb{F}_\ell)\}_{\ell}$
arising from \'etale cohomology (Definition \ref{arise}), there exists a finite normal extension $L$ of $K$ such that 
if we denote $\phi_\ell(\mathrm{Gal}(\bar{\Q}/K))$ and 
$\phi_\ell(\mathrm{Gal}(\bar{\Q}/L))$ by respectively $\bar\Gamma_\ell$ and $\bar{\gamma}_\ell$ for all $\ell$, and
let $\bar{\mathbf{S}}_\ell$ be
the $\mathbb{F}_\ell$-semisimple subgroup of $\GL_{N,\F_\ell}$ associated to $\bar{\gamma}_\ell$ (or $\bar\Gamma_\ell$) by Nori \cite{Nori} for all sufficiently large $\ell$,
then the following statements hold for all sufficiently large $\ell$: 
\begin{enumerate}
\item[A(i)] The formal character of $\bar{\mathbf{S}}_\ell\hookrightarrow \mathrm{GL}_{N,\mathbb{F}_\ell}$ (Definition \ref{fc}) is independent of $\ell$  and is equal to the formal character of $(\mathbf{G}_\ell^\circ)^{\mathrm{der}}\hookrightarrow \mathrm{GL}_{N,\Q_\ell}$, where $(\mathbf{G}_\ell^\circ)^{\mathrm{der}}$ is the derived group of the identity component of $\mathbf{G}_\ell$, the monodromy group of the corresponding semi-simplified $\ell$-adic Galois representation $\Phi_\ell^{\ss}$.
\item[A(ii)] The non-cyclic composition factors of $\bar{\gamma}_\ell$ and $\bar{\mathbf{S}}_\ell(\mathbb{F}_\ell)$ are identical. 
Therefore, the composition factors of $\bar\gamma_\ell$ are finite simple groups of Lie type of characteristic $\ell$ and cyclic groups.
\item[B(i)] The total $\ell$-rank $\rk_\ell\bar\Gamma_\ell$ of $\bar\Gamma_\ell$ (Definition \ref{rank}) is equal to the rank of $\bar{\mathbf{S}}_\ell$ and is therefore independent of $\ell$.
\item[B(ii)] The $A_n$-type $\ell$-rank $\rk_\ell^{A_n}\bar\Gamma_\ell$ of $\bar\Gamma_\ell$ (Definition \ref{rank}) for $n\in\N\backslash\{1,2,3,4,5,7,8\}$ and the parity of $(\rk_\ell^{A_4}\bar\Gamma_\ell)/4$ are independent of $\ell$. 
\end{enumerate}
\end{abstract}

\maketitle

{\tableofcontents}

\section{Introduction}

Let $K$ be a number field, $\mathscr{P}\subset\N$ the set of prime numbers, and $X$ a complete non-singular variety defined over $K$. For any integer $i$ belonging to $[0,2\mathrm{dim}X]$, the absolute Galois group $\mathrm{Gal}_K:=\mathrm{Gal}(\bar{\Q}/K)$ acts on the $i$th $\ell$-adic \'etale cohomology group $H^i_{\mathrm{\acute{e}t}}(X_{\bar{K}},\Q_\ell)$ for each prime number $\ell\in\mathscr{P}$. The dimension of $H^i_{\mathrm{\acute{e}t}}(X_{\bar{K}},\Q_\ell)$ as a $\Q_\ell$-vector space is independent of $\ell$ and we denote it by $N$. We therefore obtain a system of continuous, $\ell$-adic Galois representations indexed by $\mathscr{P}$:
\begin{equation*}
\{\Phi_\ell:\mathrm{Gal}_K\rightarrow \mathrm{GL}_N(\Q_\ell)\}_{\ell\in\mathscr{P}}
\end{equation*}
which satisfies strict compatibility  (Deligne \cite{Del}) in the sense of Serre \cite[Chapter 1]{Sbk}. There is a conjectural $\ell$-independence \cite{S94} on the images of $\{\Phi_\ell\}$ which has been studied by many people. When $X$ is an elliptic curve without complex multiplication, Serre has proved that the Galois action on the $\ell$-adic Tate module $T_\ell(X)$ is the whole $\mathrm{GL}(T_\ell(X))$ when $\ell$ is sufficiently large by showing that the Galois action $\phi_\ell$ on $\ell$-torsion points $X[\ell]\cong T_\ell(X)/\ell T_\ell(X)$:
\begin{equation*}
\phi_\ell: \mathrm{Gal}_K\rightarrow \mathrm{GL}(X[\ell])\cong\mathrm{GL}_2(\mathbb{F}_\ell)
\end{equation*}
is surjective for $\ell\gg1$ \cite{S72}. This paper is motivated by the idea that the largeness of the $\ell$-adic Galois image $\Gamma_\ell:=\Phi_\ell(\mathrm{Gal}_K)$ can be studied via \emph{taking mod $\ell$ reduction}.  More precisely, given any continuous, $\ell$-adic representation $\Phi_\ell:\mathrm{Gal}_K\rightarrow \GL_N(\Q_\ell)$, one can find a Galois stable $\mathbb{Z}_\ell$-lattice of $\Q_\ell^N$ so that up to some change of coordinates, we may assume $\Phi_\ell(\mathrm{Gal}_K)\subset\mathrm{GL}_N(\mathbb{Z}_\ell)$ since $\mathrm{Gal}_K$ is compact. Then by taking mod $\ell$ reduction $\mathrm{GL}_N(\mathbb Z_\ell)\rightarrow \mathrm{GL}_N(\mathbb F_\ell)$ and semi-simplification, we obtain a continuous, semisimple, mod $\ell$ Galois representation
\begin{equation*}
\phi_\ell: \mathrm{Gal}_K\rightarrow \mathrm{GL}_N(\mathbb{F}_\ell)
\end{equation*}
 which is independent of the choice of the $\mathbb{Z}_\ell$-lattice by Brauer-Nesbitt \cite[Theorem 30.16]{CR}. Denote the mod $\ell$ Galois image $\phi_\ell(\mathrm{Gal}_K)$ by $\bar{\Gamma}_\ell$.

\begin{definition}\label{arise} A system of mod $\ell$ Galois representations 
\begin{equation*}
\{\phi_\ell: \mathrm{Gal}_K\rightarrow \mathrm{GL}_N(\mathbb{F}_\ell)\}_{\ell\in\mathscr{P}}
\end{equation*}
 is said to be \emph{arising from \'etale cohomology} if it is the semi-simplification of a
 mod $\ell$ reduction of the $\ell$-adic system or its dual system:
\begin{equation*}
\{\Phi_\ell: \mathrm{Gal}_K\rightarrow \mathrm{GL}(H^i_{\mathrm{\acute{e}t}}(X_{\bar{K}},\Q_\ell))\}_{\ell\in\mathscr{P}},
\end{equation*}
\begin{equation*}
\{\Phi_\ell: \mathrm{Gal}_K\rightarrow \mathrm{GL}(H^i_{\mathrm{\acute{e}t}}(X_{\bar{K}},\Q_\ell)^\vee)\}_{\ell\in\mathscr{P}}
\end{equation*}
for a complete non-singular variety $X$ defined over $K$ and some $i$, where $H^i_{\mathrm{\acute{e}t}}(X_{\bar{K}},\Q_\ell)^\vee:=\mathrm{Hom}_{\Q_\ell}(H^i_{\mathrm{\acute{e}t}}(X_{\bar{K}},\Q_\ell),\Q_\ell)$.\\
\end{definition}

Let $\rho^{\ss}$ denote the semi-simplification for any finite dimensional representation $\rho$ over a perfect field (well defined by Brauer-Nesbitt \cite[Theorem 30.16]{CR}). Let $\{\Phi_\ell\}$ be a compatible system of $\ell$-adic representations of $\mathrm{Gal}_K$ in Definition \ref{arise}, the algebraic monodromy group  at $\ell$ of the semi-simplified system $\{\Phi_\ell^{\ss}\}$, denoted by $\mathbf{G}_\ell$, is the Zariski closure of $\Phi_\ell^{\ss}(\mathrm{Gal}_K)$ in $\mathrm{GL}_{N,\Q_\ell}$. Then $\mathbf{G}_\ell$ is reductive. Denote the set of non-Archimedean valuations of $K$ and $\bar{K}$ by respectively $\Sigma_K$ and $\Sigma_{\bar{K}}$. The strict compatibility of $\{\Phi_\ell\}$ implies $\{\phi_\ell\}$ is strictly compatible in the following sense.

\begin{definition}\label{comsys} A system of mod $\ell$ Galois representations 
\begin{equation*}
\{\phi_\ell: \mathrm{Gal}_K\rightarrow \mathrm{GL}_N(\mathbb{F}_\ell)\}_{\ell\in \mathscr{P}}
\end{equation*}
is said to be \textit{strictly compatible} if $\{\phi_\ell\}$ is continuous, semisimple, and satisfies the following conditions:
\begin{enumerate}
\item[(i)] There is a finite subset $S\subset\Sigma_K$ such that $\phi_\ell$ is \emph{unramified} outside $S_\ell:=S\cup\{v\in\Sigma_K: v|\ell \}$ for all $\ell$,
\item[(ii)] For any $\ell_1,\ell_2\in\mathscr{P}$ and any $\bar{v}\in\Sigma_{\bar{K}}$ extending some $v\in\Sigma_K\backslash(S_{\ell_1}\cup S_{\ell_2})$, the characteristic polynomials of $\phi_{\ell_1}(\mathrm{Frob}_{\bar{v}})$ and $\phi_{\ell_2}(\mathrm{Frob}_{\bar{v}})$ are the reductions mod $\ell_1$ and mod $\ell_2$ of some polynomial $P_v(x)\in\mathbb{Q}[x]$ depending only on $v$.\\
\end{enumerate}
\end{definition}

Let $\rho:\mathbf{G}\rightarrow \GL_{N,F}$ be a faithful representation of a rank $r$ reductive algebraic group $\mathbf{G}$ defined over field $F$. We define in the beginning of $\mathsection2$ \emph{the formal character} of $\rho$ as an element of quotient set $\GL_r(\Z)\backslash \Z[\Z^r]$. Here $\Z[\Z^r]$ is the free abelian group generated by $\Z^r$ and $\GL_r(\Z)$ acts naturally on $\Z[\Z^r]$. This allows us to define what is meant by two representations having \emph{the same formal character} (see Definition \ref{fc}') and the formal character is \emph{bounded by a constant} $C>0$ (see Definition \ref{bfc},\ref{bfc}'). Let $\{\phi_\ell\}$ be a strictly compatible system of mod $\ell$ representations arising from \'etale cohomology (Definition \ref{arise},\ref{comsys}), this paper studies $\ell$-independence of mod $\ell$ Galois images $\bar{\Gamma}_\ell$ for all sufficiently large $\ell$. Let $\mathfrak{g}$ be a Lie type. We define \emph{total $\ell$-rank} $\rk_\ell\bar\Gamma$ and \emph{$\mathfrak{g}$-type $\ell$-rank} $\rk_\ell^{\mathfrak{g}}\bar\Gamma$ of a finite group $\bar\Gamma$ in $\mathsection3.3$ Definition \ref{rank}. The main results are as follows.

\begin{customthm}{A}\label{main}(Main theorem) \textit{Let $K$ be a number field and $\{\phi_\ell:\mathrm{Gal}_K\rightarrow\mathrm{GL}_N(\mathbb{F}_\ell)\}_{\ell\in\mathscr{P}}$ a strictly
compatible system of mod $\ell$ Galois representations  arising from \'etale cohomology (Definition \ref{arise},\ref{comsys}). There exists a finite normal extension $L$ of $K$ such that if we denote $\phi_\ell(\mathrm{Gal}_K)$ and $\phi_\ell(\mathrm{Gal}_L)$ by respectively $\bar\Gamma_\ell$ and $\bar{\gamma}_\ell$ for all $\ell$, and let $\bar{\mathbf{S}}_\ell\subset\GL_{N,\F_\ell}$ be the connected $\mathbb{F}_\ell$-semisimple subgroup associated to $\bar{\gamma}_\ell$ (or $\bar\Gamma_\ell$) by Nori's theory for $\ell\gg1$, then the following hold for $\ell\gg1:$
\begin{enumerate}
\item[(i)] The formal character of $\bar{\mathbf{S}}_\ell\hookrightarrow \mathrm{GL}_{N,\mathbb{F}_\ell}$ is independent of $\ell$ (Definition \ref{fc}') and is equal to the formal character of  $(\mathbf{G}_\ell^\circ)^{\mathrm{der}}\hookrightarrow \mathrm{GL}_{N,\Q_\ell}$, where $(\mathbf{G}_\ell^\circ)^{\mathrm{der}}$ is the derived group of the identity component of $\mathbf{G}_\ell$, the algebraic monodromy group of the semi-simplified representation $\Phi_\ell^{\ss}$.
\item[(ii)] The composition factors of $\bar{\gamma}_\ell$ and $\bar{\mathbf{S}}_\ell(\mathbb{F}_\ell)$ are identical modulo cyclic groups. Therefore, the composition factors of $\bar{\gamma}_\ell$ are finite simple groups of Lie type of characteristic $\ell$ and cyclic groups. 
\end{enumerate}}
\end{customthm}

\begin{customcor}{B}\label{cor} \textit{Let $\bar{\mathbf{S}}_\ell$ be defined as above, then the following hold for $\ell\gg1:$
\begin{enumerate}
\item[(i)] The total $\ell$-rank $\rk_\ell\bar\Gamma_\ell$ of $\bar\Gamma_\ell$ (Definition \ref{rank}) is equal to the rank of $\bar{\mathbf{S}}_\ell$ and is therefore independent of $\ell$.
\item[(ii)] The $A_n$-type $\ell$-rank $\rk_\ell^{A_n}\bar\Gamma_\ell$ of $\bar\Gamma_\ell$ (Definition \ref{rank}) for $n\in\N\backslash\{1,2,3,4,5,7,8\}$ and 
the parity of $(\rk_\ell^{A_4}\bar\Gamma_\ell)/4$ are independent of $\ell$. 
\end{enumerate}}
\end{customcor}

\begin{customrem}{1.1}\label{max} As an application of the main results, we prove in \cite{HL} that $\Phi_\ell(\mathrm{Gal}_K)$, the $\ell$-adic Galois image  arising from \'etale cohomology has certain maximality inside the algebraic monodromy group $\mathbf{G}_\ell$ if $\ell$ is sufficiently large and $\mathbf{G}_\ell$ is of type A. This generalizes Serre's open image theorem on non-CM elliptic curves \cite{S72}.\end{customrem}

\begin{customrem}{1.2}\label{iota} For any field $F$, define $\iota$ to be the involution of $\GL_{N,F}$ that sends  $A$ to $(A^t)^{-1}$. If $\Gamma$ is a subgroup of $\GL_N(F)$, then $\Gamma$ is semisimple on $F^N$ if and only if  $\iota(\Gamma)$ is semisimple on $F^N$. If $\phi_\ell$ is the mod $\ell$ Galois representation arising from the dual representation $H^i_{\mathrm{\acute{e}t}}(X_{\bar{K}},\Q_\ell)^\vee$ (Definition \ref{arise}), then the mod $\ell$ representation arising from $H^i_{\mathrm{\acute{e}t}}(X_{\bar{K}},\Q_\ell)$ is $\iota\circ\phi_\ell$ under suitable basis by Brauer-Nesbitt \cite[Theorem 30.16]{CR}. Since $\iota$ is an automorphism of $\GL_{N}$, it suffices to prove Theorem \ref{main} by considering only the dual mod $\ell$ system $\{\phi_\ell\}$ and Galois images $\{\bar{\Gamma}_\ell\}$. Let $\phi_{\bar{v}}$ be the restriction of $\phi_\ell$ to inertia subgroup $I_{\bar{v}}$ such that $\bar{v}\in\Sigma_{\bar{K}}$ divides $\ell$. The reason for choosing the dual system is that the characters of $\phi_{\bar{v}}^{\ss}$ have  bounded exponents in the sense of Definition \ref{restrict} for $\ell\gg1$ by Serre's tame inertia conjecture proved by Caruso \cite{Car} (see Theorem \ref{231}). Such boundedness makes our arguments simpler.\\
 \end{customrem}

This paper can be considered as ``mod $\ell$'' version of \cite{Hui} in which we studied $\ell$-independence of monodromy groups of any compatible system of $\ell$-adic representations by the theory of abelian $\ell$-adic representation \cite{Sbk} and the representation theory of complex semisimple Lie algebra. The main difference between \cite{Hui} and this paper is that 
you get nothing new for 
considering monodromy groups of mod $\ell$ Galois images because they are just finite groups. 
The strategy in this paper is to first construct for each $\ell\gg1$ a connected $\mathbb{F}_\ell$-reductive subgroup $\bar{\mathbf{G}}_\ell\subset\mathrm{GL}_{N,\mathbb{F}_\ell}$ with bounded formal characters (Definition \ref{bfc},\ref{bfc}') such that $[\bar{\Gamma}_\ell:\bar{\Gamma}_\ell\cap \bar{\mathbf{G}}_\ell(\mathbb F_\ell)]$ and $[\bar{\mathbf{G}}_\ell(\mathbb F_\ell):\bar{\Gamma}_\ell\cap \bar{\mathbf{G}}_\ell(\mathbb F_\ell)]$ are both uniformly bounded (Theorem \ref{redenv}). The idea to construct such $\bar{\mathbf{G}}_\ell$ was due to Serre \cite{SL2} where he considered the Galois action on the $\ell$-torsion points of abelian varieties $A$ without complex multiplication. In Serre's case, the semisimple part $\bar{\mathbf{S}}_\ell$ of $\bar{\mathbf{G}}_\ell$ is constructed by Nori's theory \cite{Nori} and the center $\bar{\mathbf{C}}_\ell$ of $\bar{\mathbf{G}}_\ell$ is the mod $\ell$ reduction of some $\Q$-diagonalizable group $\mathbf{C}_{\Q}$ which is a subgroup of the centralizer of monodromy $\mathbf{G}_\ell$ in $\GL_{N,\Q_\ell}$. Hence, $\{\bar{\mathbf{G}}_\ell\subset \GL_{N,\F_\ell}\}_\ell$ has bounded formal characters. The construction of $\mathbf{C}_{\Q}$ uses the abelian theory of $\ell$-adic representations \cite{Sbk} and the Tate conjecture for abelian variety (proved by Faltings \cite{F}) which relates the endomorphism ring of $A$ and the commutant of Galois image $\Gamma_\ell$ in $\End_N(\Q_\ell)$. Although we don't have the luxury of the Tate conjecture for \'etale cohomology in general, it is still possible to construct reductive $\bar{\mathbf{G}}_\ell\subset \GL_{N,\F_\ell}$ with the above conditions for $\ell\gg1$ by Nori's theory, tame inertia tori \cite{SL2}, and Serre's tame inertia conjecture (proved by Caruso \cite{Car}). The constructions of these algebraic envelopes $\bar{\mathbf{G}}_\ell$  of $\bar{\Gamma}_\ell$ (see Definition \ref{env}) are accomplished in $\mathsection2$. Once these nice envelopes are ready, we can use the techniques in \cite[$\mathsection3$]{Hui} to prove that the formal character (Definition \ref{fc}) of the semisimple part $\bar{\mathbf{S}}_\ell\hookrightarrow \mathrm{GL}_{N,\mathbb F_\ell}$ is independent of $\ell\gg1$ (Theorem \ref{main}). The number of $A_n$ factors of $\bar{\mathbf{S}}_\ell$ for large $n$ are then independent of $\ell$ for all $\ell\gg1$ by \cite[Theorem 2.19]{Hui}. Since the group of $\mathbb F_\ell$-rational points of $\bar{\mathbf{G}}_\ell$ is commensurate to the Galois image $\bar{\Gamma}_\ell$, one deduces $\ell$-independence results on the number of Lie type composition factors of characteristic $\ell$ of $\bar{\Gamma}_\ell$ for $\ell\gg1$ (Corollary \ref{cor}). Section $3$ is devoted to the proofs of Theorem \ref{main} and Corollary \ref{cor}. 
The following summarizes the symbols that are frequently used within the text. Groups inside $\GL_{N,F}$ with $\mathrm{char}F>0$ have their symbols over-lined and should not be confused with base change to an algebraic closure.\\

\begin{tabular}{ll}
$\mathrm{Gal}_F$ & the absolute Galois group of field $F$\\
$K$, $L$ & number fields\\
$\bar v$ & a valuation of $\bar K$ that divides prime $\ell$\\
$I_{\bar v}$ & the inertia subgroup of $\mathrm{Gal}_K$ at valuation $\bar v$\\ 
$U_\ell$, $V_\ell$, $W_\ell$ ($\bar U_\ell$, $\bar V_\ell$, $\bar W_\ell$), $\ldots$ & vector spaces defined over $\F_\ell$ (over $\bar\F_\ell$)\\
$\bar{\Gamma}_\ell$, $\bar{\gamma}_\ell$, $\bar{\Omega}_\ell$, $\bar{\Omega}_{\bar v}$, $\ldots$ & finite subgroups of $\GL_N(\F_\ell)$\\
$\mathbf{G}_\ell$, $\mathbf{T}_\ell$, $\ldots$ & algebraic subgroups of $\GL_{N,\Q_\ell}$\\
$\bar{\mathbf{G}}_\ell$, $\bar{\mathbf{S}}_\ell$, $\bar{\mathbf{N}}_\ell$, $\bar{\mathbf{I}}_\ell$, $\bar{\mathbf{I}}_{\bar v}$, $\ldots$  & algebraic subgroups of $\GL_{N,\F_\ell}$\\
$\Phi_\ell$, $\Psi_\ell$, $\Theta_\ell$, $\ldots$ & representations over $\Q_\ell$\\
$\phi_\ell$, $\psi_\ell$, $\mu_\ell$, $t_\ell$, $\rho_{\bar v}$, $f_{\bar v}$, $w_{\bar v}$, $\ldots$ & representations over $\F_\ell$ \\
$\rho^{\ss}$ & the semi-simplification of representation $\rho$\\
$\rho^{\vee}$ & the dual representation of representation $\rho$
\end{tabular}

\section{Algebraic envelope $\bar{\mathbf{G}}_\ell$}

We define \emph{formal character} and prove some related propositions before stating the main result (Theorem \ref{redenv}) of this section. Let $\rho:\mathbf{G}\rightarrow\GL_{N,F}$ be a faithful representation of a rank $r$ reductive algebraic group $\mathbf{G}$ defined over field $F$. Choose a maximal torus $\mathbf{T}$ of $\mathbf{G}$ and denote the character group of $\mathbf{T}$ by $\X$. Let $\{w_1,w_2,...,w_N\}\subset \X$ be the \emph{multiset} of weights of $\rho|_{\mathbf{T}}$ over $\bar{F}$ and choose an isomorphism $\X\cong\Z^r$. Then the image of $w_1+w_2+\cdots+w_N\in \Z[\X]\cong\Z[\Z^r]$ in the quotient set $\GL(\X)\backslash \Z[\X]\cong \GL_r(\Z)\backslash \Z[\Z^r]$ is independent of the choices of maximal torus $\mathbf{T}$ and isomorphism $\X\cong\Z^r$. 

\begin{definition}\label{fc} Let $\rho$ be as above. \emph{The formal character} of $\rho$ is defined to be the image of $w_1+w_2+\cdots+w_N\in\Z[\Z^r]$ in $\GL_r(\Z)\backslash \Z[\Z^r]$.\end{definition}

This definition of formal character is more general than the one in \cite[$\mathsection2.1$]{Hui}. It allows us to compare formal characters of two $N$-dimensional faithful representations $\rho_1:\mathbf{G}_1\rightarrow\GL_{N,F_1}$ and $\rho_2:\mathbf{G}_2\rightarrow\GL_{N,F_2}$ over different fields whenever  $\mathbf{G}_1$ and $\mathbf{G}_2$ have the same rank. Let $\mathbb{G}_{m}^{N}$ be the diagonal subgroup of $\GL_{N}$. Every character $\chi$ of $\mathbb{G}_{m}^{N}$ can be expressed uniquely as $x_1^{m_1}x_2^{m_2}\cdots x_N^{m_N}$, a product of powers of \emph{standard characters} $\{x_1,x_2,...,x_N\}$, where $x_i$ maps $(a_1,...,a_N)\in  \mathbb{G}_{m}^{N}$ to $a_i$ for all $i$. The following  proposition (definition) is particularly useful.

\begin{proposition}\label{p201}(Definition \ref{fc}') Let $\rho_1$ and $\rho_2$ be as above. If $\mathbf{T}_1\subset \mathbf{G}_1$ and $\mathbf{T}_2\subset\mathbf{G}_2$ are maximal tori. The following conditions are equivalent:
\begin{enumerate}
\item[(i)] Representations $\rho_1$ and $\rho_2$ have the same formal character.
\item[(ii)] Tori $\rho_1(\mathbf{T}_1)$ and $\rho_2(\mathbf{T}_2)$ are respectively conjugate (in $\GL_{N,\bar{F}_1}$ and $\GL_{N,\bar{F}_2}$) to some subtori $\mathbf{D}_1$ and $\mathbf{D}_2$  of the diagonal subgroup $\mathbb{G}_{m}^N\subset \GL_N$ such that $\mathbf{D}_1$ and $\mathbf{D}_2$ are annihilated by the same set of characters of $\mathbb{G}_{m}^N$.
\end{enumerate}
Hence, formal characters of $N$-dimensional faithful representations are in bijective correspondence with subtori in $\mathbb{G}_{m}^{N}$ up to natural action of permutation group $\mathrm{Perm}(N)$ of $N$ letters on $\mathbb{G}_{m}^{N}$.\end{proposition}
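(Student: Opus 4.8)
The plan is to prove the equivalence (i) $\Leftrightarrow$ (ii) by working entirely with multisets of weights, and then to extract the bijective correspondence as a formal consequence. First I would make explicit the relationship between a faithful representation $\rho\colon\mathbf{G}\to\GL_{N,F}$, a choice of maximal torus $\mathbf{T}\subset\mathbf{G}$, and the diagonal subgroup $\mathbb{G}_m^N$: after base change to $\bar F$ and a suitable conjugation, $\rho|_{\mathbf{T}}$ is simultaneously diagonalizable, so $\rho(\mathbf{T}_{\bar F})$ lands in $\mathbb{G}_m^N$ as some subtorus $\mathbf{D}$. The inclusion $\mathbf{D}\hookrightarrow\mathbb{G}_m^N$ induces a surjection $\Z^N=\X^*(\mathbb{G}_m^N)\twoheadrightarrow\X^*(\mathbf{D})$ sending the standard character $x_i$ to the $i$-th weight $w_i$ of $\rho|_{\mathbf{T}}$ (with respect to the identification of $\X^*(\mathbf{D})$ with $\X^*(\mathbf{T})$ coming from $\rho|_{\mathbf{T}}\colon\mathbf{T}\xrightarrow{\sim}\mathbf{D}$, which is an isomorphism since $\rho$ is faithful). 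In particular the multiset $\{w_1,\dots,w_N\}$, up to the $\GL_r(\Z)$-action coming from changing the identification $\X^*(\mathbf{T})\cong\Z^r$, is exactly the datum of $\mathbf{D}$ up to permutation of coordinates and diagonal automorphism. The element $\sum w_i\in\Z[\Z^r]$, up to $\GL_r(\Z)$, is precisely Definition \ref{fc}.

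Next I would prove (i) $\Rightarrow$ (ii). Suppose $\rho_1$ and $\rho_2$ have the same formal character, and diagonalize as above to get subtori $\mathbf{D}_1,\mathbf{D}_2\subset\mathbb{G}_m^N$ with weight multisets $\{w_1^{(1)},\dots,w_N^{(1)}\}$ and $\{w_1^{(2)},\dots,w_N^{(2)}\}$ in $\X^*(\mathbf{D}_1)\cong\Z^{r_1}$ and $\X^*(\mathbf{D}_2)\cong\Z^{r_2}$. Since the formal characters agree we have $r_1=r_2=:r$, and after applying an element of $\GL_r(\Z)$ to one side and permuting coordinates in $\mathbb{G}_m^N$ we may assume $w_i^{(1)}=w_i^{(2)}$ for all $i$ as elements of $\Z^r$. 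It then suffices to observe that the annihilator of $\mathbf{D}_j$ in $\X^*(\mathbb{G}_m^N)=\Z^N$ — i.e.\ the set of characters $x_1^{m_1}\cdots x_N^{m_N}$ restricting trivially to $\mathbf{D}_j$ — is exactly the kernel of the map $\Z^N\to\Z^r$, $x_i\mapsto w_i^{(j)}$, because a character of $\mathbb{G}_m^N$ is trivial on a subtorus iff it dies under restriction to that subtorus, and restriction is the surjection dual to the inclusion. Since the two maps $\Z^N\to\Z^r$ are literally equal, $\mathbf{D}_1$ and $\mathbf{D}_2$ are annihilated by the same set of characters of $\mathbb{G}_m^N$, which is (ii). For (ii) $\Rightarrow$ (i) I would run the same bookkeeping backwards: if $\mathbf{D}_1$ and $\mathbf{D}_2$ have the same annihilator $M\subset\Z^N$, then both $\X^*(\mathbf{D}_j)$ are canonically identified with $\Z^N/M$, and under this identification the weight multisets of $\rho_1$ and $\rho_2$ are both the common image of $\{x_1,\dots,x_N\}$ in $\Z^N/M$; hence their sums in $\Z[\Z^N/M]\cong\Z[\Z^r]$ agree, and a fortiori they agree in $\GL_r(\Z)\backslash\Z[\Z^r]$, so the formal characters coincide.

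Finally, the bijective correspondence is a repackaging of what has been shown: the construction $\rho\mapsto\mathbf{D}$ (diagonalized image of a maximal torus) is well-defined up to the $\mathrm{Perm}(N)$-action on $\mathbb{G}_m^N$, because any two choices of maximal torus of $\mathbf{G}$ are $\mathbf{G}(\bar F)$-conjugate and any two diagonalizations of $\rho|_{\mathbf{T}}$ differ by an element of $\GL_N(\bar F)$ normalizing $\mathbb{G}_m^N$ up to inner automorphism, i.e.\ by a permutation matrix; conversely, given a subtorus $\mathbf{D}\subset\mathbb{G}_m^N$ one recovers a faithful $N$-dimensional representation by taking $\mathbf{G}=\mathbf{D}$ with its tautological embedding. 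The equivalence (i) $\Leftrightarrow$ (ii) shows that two representations give the same formal character iff their associated subtori agree up to annihilator, and since a subtorus of $\mathbb{G}_m^N$ is determined by its annihilator (a saturated subgroup of $\Z^N$), ``same annihilator'' is the same as ``equal'', so formal characters biject with $\mathrm{Perm}(N)$-orbits of subtori. The main obstacle here is not conceptual but one of careful normalization: tracking precisely how the $\GL_r(\Z)$-ambiguity in Definition \ref{fc} corresponds to the (basis-dependent) identification $\X^*(\mathbf{T})\cong\Z^r$ and how the $\mathrm{Perm}(N)$-ambiguity arises from conjugacy of maximal tori together with the Weyl-group action, so that the two sides of the claimed bijection are matched without over- or under-counting.
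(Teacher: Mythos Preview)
Your proof is correct and follows essentially the same approach as the paper: both arguments diagonalize the maximal torus, identify the annihilator of $\mathbf{D}_j$ with the kernel of the restriction map $\Z^N\to\X^*(\mathbf{T}_j)$, and deduce the equivalence from the fact that this kernel is determined by (and determines) the ordered weight list up to permutation and $\GL_r(\Z)$-automorphism. Your treatment of (ii)$\Rightarrow$(i) via $\X^*(\mathbf{D}_j)\cong\Z^N/M$ is in fact a bit more direct than the paper's detour through the standard form $\{a_1=\cdots=a_{N-r}=1\}$; the only imprecision is the phrase ``normalizing $\mathbb{G}_m^N$ up to inner automorphism'' in the well-definedness step, where the cleaner statement (which the paper uses) is simply that the multiset of weights of $\rho|_{\mathbf{T}}$ is intrinsic, so any two diagonalizations differ by a permutation of coordinates.
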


\begin{proof}
Assume $\mathbf{T}_j= \mathbb{G}_{m,\bar{F}_j}^r$ and $\rho_j(\mathbf{T}_j)\subset \mathbb{G}_{m,\bar{F}_j}^N\subset \GL_{N,\bar{F_j}}$ from now on by base change to algebraic closure of $F_j$ and diagonalizations for $j=1,2$. Suppose (i) holds, then by taking an automorphism of the character group of $\mathbf{T}_1$ and a permutation of coordinates of $\mathbb{G}_{m}^{N}$ we obtain 
\begin{equation*}
x_i\circ\rho_1=x_i\circ\rho_2
\end{equation*}
for all standard character $x_i$ of $\mathbb{G}_{m}^{N}$ if we identify the character groups of $\mathbb{G}_{m,\bar{F}_1}^r$ and $\mathbb{G}_{m,\bar{F}_2}^r$ naturally. This implies the set of characters of $\mathbb{G}_{m}^{N}$ that annihilate $\mathbf{D}_j:=\rho_j(\mathbf{T}_j)$ for $j=1,2$ are equal which is (ii). Suppose (ii) holds, it suffices to consider the case that $\rho_1$ and $\rho_2$ are standard representations (inclusions) since $\rho:\mathbf{G}\rightarrow \GL_{N,F}$ and $\rho(\mathbf{G})\subset\GL_{N,F}$ always have the same formal character. Condition (ii) implies that there exists an automorphism of $\mathbb{G}_{m}^{N}$ such that  
\begin{equation*}
\mathbf{D}_j=\{(a_1,...,a_N)\in \mathbb{G}_{m}^{N}:~a_1=a_2=\cdots=a_{N-r}=1\}
\end{equation*}
for $j=1,2$ because $\mathbf{D}_1$ and $\mathbf{D}_2$ are connected. Then (i) follows easily.

Let $\rho:\mathbf{T}\rightarrow \GL_{N,\bar{F}}$ be a representation of a torus $\mathbf{T}$. Since the set of weights of $\rho$ is obtained by diagonalizing $\rho(\mathbf{T})$ and is independent of diagonalizations, subtori of  $\mathbb{G}_{m}^{N}$ that are conjugate to $\rho(\mathbf{T})$ only differ by a permutation of $N$ coordinates. Therefore, the map from formal characters of $N$-dimensional faithful representations to subtori of $\mathbb{G}_{m}^{N}$ modulo action of $\mathrm{Perm}(N)$ is well defined. Since the equivalence of (i) and (ii) implies injectivity and any subtorus of $\mathbb{G}_{m}^{N}$ is the formal character of the standard representation of the subtorus, the map is a bijective correspondence.
\end{proof}

\noindent \textbf{Examples}: Denote standard representation and faithful representation by respectively Std and $\rho$. Below are some pair of representations that have the same formal character:
\begin{enumerate}
\item[(i)] $(\GL_{2,\Q_\ell},\mathrm{Std})$ and $(\GL_{2,\F_\ell},\mathrm{Std})$;
\item[(ii)] $(\mathbf{G},\rho)$ and $(\mathbf{H},\rho|_{\mathbf{H}})$ if $\mathbf{H}$ is a reductive subgroup of $\mathbf{G}$ of same rank;
\item[(iii)] $(\mathbf{G},\rho)$ and $(\mathbf{G},\rho^\vee)$; 
\item[(iv)] $(\mathbf{G},\rho)$ and $(\rho(\mathbf{G}),\mathrm{Std})$.\\
\end{enumerate}

\begin{definition}\label{bfc} The formal character of $\rho$ is said to be \emph{bounded by a constant} $C>0$ if there exists an isomorphism $\X\cong\Z^r$ such that the coefficients of the images of weights $w_1,w_2,...,w_N\in\X$ in $\Z^r$ have absolute values bounded by $C$.\\
Let $N$ be a fixed integer and $\{\rho_i:\mathbf{G}_i\rightarrow\GL_{N_i,F_i}\}_{i\in I}$ a family of faithful representations of reductive groups such that
$N_i\leq N$ for all $i\in I$. The family is said to have \emph{bounded formal characters} if the formal character of $\rho_i$ is bounded by some constant $C>0$ for all $i\in I$. \end{definition}

\begin{remark}\label{finitefc} Let $\{\rho_i\}_{i\in I}$ be a family of representations in Definition \ref{bfc} having bounded formal characters. Then the number of distinct formal characters arising from the family is finite. \end{remark}

Let $\chi=x_1^{m_1}x_2^{m_2}\cdots x_N^{m_N}$ be a character of $\mathbb{G}_{m}^{N}$ expressed as products of standard characters. We call multiset $\{m_1,...,m_N\}$ \emph{the exponents} of $\chi$ and say \emph{the exponents are bounded by} $C>0$ if $|m_i|<C$ for all $1\leq i\leq N$. The following characterization of Definition \ref{bfc} is very useful in this paper.

\begin{proposition}\label{p203}(Definition \ref{bfc}') Let $\{\rho_i\}_{i\in I}$ be a family of  faithful representations of reductive $\mathbf{G}_i$ such that $\rho_i$ is $N_i$-dimensional and $N_i\leq N$ for all $i\in I$. Choose a maximal torus $\mathbf{T}_i$ of $\mathbf{G}_i$ for each $i\in I$. The following conditions are equivalent:
\begin{enumerate}
\item[(i)] The family has bounded formal characters.
\item[(ii)] For any $i\in I$ and any subtorus $\mathbf{D}_i$ of the diagonal subgroup $\mathbb{G}_{m}^{N_i}\subset \GL_{N_i}$ that is conjugate (in $\GL_{N_i,\bar{F_i}}$) to $\rho_i(\mathbf{T}_i)$, one can choose a set $R_i$ of characters of $\mathbb{G}_{m}^{N_i}$  such that the common kernel of $R_i$ is $\mathbf{D}_i$ and the exponents of characters in $R_i$ are bounded by a constant independent of $i\in I$.
\end{enumerate}
\end{proposition}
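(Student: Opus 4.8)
The plan is to recognise Proposition~\ref{p203} as a statement about integer lattices and to reduce both implications to a uniform, Siegel's-lemma type bound on bases of kernel lattices of integer matrices with bounded entries. First I would set up the dictionary. Fix $i\in I$ and, exactly as in the proof of Proposition~\ref{p201}, base change to $\bar F_i$ and diagonalise so that $\mathbf{T}_i=\mathbb{G}_{m,\bar F_i}^{r_i}$ and $\mathbf{D}_i:=\rho_i(\mathbf{T}_i)\subseteq\mathbb{G}_m^{N_i}$. Because $\rho_i$ is faithful, $\rho_i|_{\mathbf{T}_i}$ is a closed immersion onto $\mathbf{D}_i$, so restriction of characters gives a surjection $\Z^{N_i}=\X(\mathbb{G}_m^{N_i})\twoheadrightarrow\X(\mathbf{D}_i)\cong\X(\mathbf{T}_i)$ sending the standard character $x_j$ to the weight $w_j$ of $\rho_i$, with kernel $M_i:=\{\chi:\chi|_{\mathbf{D}_i}=1\}$; since $\mathbf{D}_i$ is a torus, $\X(\mathbf{D}_i)$ is free and hence $M_i$ is a saturated sublattice of $\Z^{N_i}$. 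Dually, the cocharacter lattice $X_*(\mathbf{D}_i)\subseteq\Z^{N_i}$ is $\{v\in\Z^{N_i}:\langle v,\chi\rangle=0\text{ for all }\chi\in M_i\}$, the pairing $X_*(\mathbf{D}_i)\times\X(\mathbf{D}_i)\to\Z$ is perfect, and $\langle v,w_j\rangle$ is the $j$-th coordinate $(v)_j$ of $v$. I would also record that, for a set $R_i$ of characters of $\mathbb{G}_m^{N_i}$, the common kernel of $R_i$ equals $\mathbf{D}_i$ if and only if $\langle R_i\rangle=M_i$ (as $\mathbf{D}_i$ is cut out by $M_i$ and is connected, any strictly smaller generated sublattice makes $\Z^{N_i}/\langle R_i\rangle$ fail to be torsion-free or of the right rank, enlarging the common kernel), in which case $X_*(\mathbf{D}_i)=\{v\in\Z^{N_i}:\langle v,\chi\rangle=0\text{ for all }\chi\in R_i\}$. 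The one external ingredient is the standard fact that there is a function $g(n,C)$ such that the kernel lattice $\ker A\cap\Z^{n}$ of any integer matrix $A$ with $n$ columns (and any number of rows) whose entries have absolute value $\le C$ admits a $\Z$-basis with entries of absolute value $\le g(n,C)$; this follows, e.g., from Hermite normal form, or from spanning $\ker A$ by vectors assembled from the maximal minors of $A$.

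With this in hand the two implications are short. For (i)$\Rightarrow$(ii): an isomorphism $\X(\mathbf{T}_i)\cong\Z^{r_i}$ witnessing the bound $C$ on the weights turns $x_j\mapsto w_j$ into a matrix with $N_i\le N$ columns and entries $\le C$ and kernel $M_i$; a $\Z$-basis $R_i$ of $M_i$ supplied by $g$ then has exponents $\le g(N,C)$ and, since $\langle R_i\rangle=M_i$, common kernel $\mathbf{D}_i$, with $g(N,C)$ independent of $i$. For (ii)$\Rightarrow$(i): given $R_i$ with exponents $\le C$ and common kernel $\mathbf{D}_i$, the matrix whose rows are the elements of $R_i$ has $N_i\le N$ columns and entries $\le C$, so $X_*(\mathbf{D}_i)=\ker\cap\Z^{N_i}$ has a $\Z$-basis $v_1,\dots,v_{r_i}$ with entries $\le g(N,C)$; using the dual basis $v_1^{\ast},\dots,v_{r_i}^{\ast}$ of $\X(\mathbf{T}_i)\cong\Z^{r_i}$ we obtain $w_j=\sum_k\langle v_k,w_j\rangle v_k^{\ast}=\sum_k(v_k)_j\,v_k^{\ast}$, so the coordinates of every weight of $\rho_i$ occur among the entries of the $v_k$ and are $\le g(N,C)$. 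Hence the formal character of $\rho_i$ is bounded by $g(N,C)$ uniformly in $i$.

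I do not expect a genuine obstacle here: the only substantive input is the uniform bound $g(n,C)$ on kernel bases, and the only point needing care is saturation — one must use that $M_i$ is saturated (because $\mathbf{D}_i$ is a torus) for the perfect pairing and for the description of $X_*(\mathbf{D}_i)$, and note that ``common kernel $=\mathbf{D}_i$'' pins $\langle R_i\rangle$ down to $M_i$ itself rather than merely to a finite-index sublattice.
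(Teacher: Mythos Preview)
Your argument is correct, but it takes a genuinely different route from the paper's. The paper's own proof is a one-liner: by Remark~\ref{finitefc}, a family with bounded formal characters has only \emph{finitely many} distinct formal characters, and by the bijection of Proposition~\ref{p201} these correspond to finitely many subtori of $\mathbb{G}_m^{N}$ up to the $\mathrm{Perm}(N)$-action; one then just picks a defining set of characters for each of these finitely many tori and takes the maximum exponent. The converse runs the same way: bounded exponents on $R_i$ force the $\mathbf{D}_i$ to lie among finitely many subtori, hence finitely many formal characters, hence a uniform bound.

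Your approach instead bypasses the finiteness observation and gives an explicit, effective bound via the lattice dictionary and a Siegel/Hermite-type estimate on bases of kernel lattices. This is more work but buys you something the paper's argument does not: an actual function $g(N,C)$ relating the two bounds, rather than a non-constructive ``take the maximum over a finite set''. One small point to tidy: in (i)$\Rightarrow$(ii) you treat a single diagonalisation, whereas condition~(ii) quantifies over \emph{all} diagonal conjugates $\mathbf{D}_i$; you should remark (as in the proof of Proposition~\ref{p201}) that any two such conjugates differ by an element of $\mathrm{Perm}(N_i)$, which permutes the standard characters and hence preserves your exponent bound. With that addition your proof is complete.
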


\begin{proof} It follows easily from Definition \ref{bfc}, the bijective correspondence in Proposition \ref{p201}, and Remark \ref{finitefc}.\end{proof}

\begin{proposition}\label{p204} Let $\{\rho_i\}_{i\in I}$ and $\{\phi_i\}_{i\in I}$ be two families of faithful representations of reductive $\mathbf{G}_i$ and $\mathbf{H}_i$ over field $F_i$ with bounded formal characters such that the target of $\rho_i$ and $\phi_i$ are both equal to $\GL_{N_i,F_i}$ and $\rho_i(\mathbf{G}_i)$ commutes with $\phi_i(\mathbf{H}_i)$ for all $i\in I$. Then the family of standard representations
\begin{equation*}
\{ \rho_i(\mathbf{G}_i)\cdot \phi_i(\mathbf{H}_i) \subset \GL_{N_i,F_i}\}_{i\in I}
\end{equation*}
also has bounded formal characters.
\end{proposition}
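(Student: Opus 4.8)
The plan is to reduce everything to the combinatorics of diagonal subtori via Proposition \ref{p201} and its quantitative refinement Proposition \ref{p203}, and then observe that the product of two such subtori is governed by the intersection of the two kernel-character sets. Concretely, fix $i\in I$ and pass to $\bar{F}_i$. Since $\rho_i(\mathbf{G}_i)$ and $\phi_i(\mathbf{H}_i)$ commute, their maximal tori commute, so we may simultaneously diagonalize: after conjugation in $\GL_{N_i,\bar{F}_i}$ there are maximal tori $\mathbf{T}_i\subset\mathbf{G}_i$ and $\mathbf{T}'_i\subset\mathbf{H}_i$ with $\rho_i(\mathbf{T}_i)=\mathbf{D}_i$ and $\phi_i(\mathbf{T}'_i)=\mathbf{D}'_i$ both contained in the diagonal $\mathbb{G}_m^{N_i}$. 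A maximal torus of the commuting product $\rho_i(\mathbf{G}_i)\cdot\phi_i(\mathbf{H}_i)$ is then $\mathbf{D}_i\cdot\mathbf{D}'_i\subset\mathbb{G}_m^{N_i}$. By Proposition \ref{p201}, the formal character of the standard representation of $\rho_i(\mathbf{G}_i)\cdot\phi_i(\mathbf{H}_i)$ is determined by the set of characters of $\mathbb{G}_m^{N_i}$ annihilating $\mathbf{D}_i\cdot\mathbf{D}'_i$.

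Next I would identify that annihilator set. A character $\chi$ of $\mathbb{G}_m^{N_i}$ annihilates the product $\mathbf{D}_i\cdot\mathbf{D}'_i$ if and only if it annihilates both $\mathbf{D}_i$ and $\mathbf{D}'_i$; equivalently, $\chi$ lies in the intersection of the annihilator subgroups $\Ann(\mathbf{D}_i)$ and $\Ann(\mathbf{D}'_i)$ inside the character lattice $\Z^{N_i}$ of $\mathbb{G}_m^{N_i}$. So the common kernel of $\Ann(\mathbf{D}_i)\cap\Ann(\mathbf{D}'_i)$ is exactly $\mathbf{D}_i\cdot\mathbf{D}'_i$. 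It remains to produce a generating set of $\Ann(\mathbf{D}_i)\cap\Ann(\mathbf{D}'_i)$ whose exponents are bounded independently of $i$, so that Proposition \ref{p203}(ii) applies. By hypothesis and Proposition \ref{p203}, there are sets $R_i$ and $R'_i$ of characters with exponents bounded by some $C$ (independent of $i$) whose common kernels are $\mathbf{D}_i$ and $\mathbf{D}'_i$ respectively; equivalently $R_i$ generates a finite-index subgroup of $\Ann(\mathbf{D}_i)$ (indeed we may take $R_i$ to generate $\Ann(\mathbf{D}_i)$ as a saturated sublattice, since passing to the saturation only shrinks exponents in a suitable basis), and similarly for $R'_i$.

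The main obstacle is the uniform exponent bound for a generating set of the lattice intersection $M_i:=\Ann(\mathbf{D}_i)\cap\Ann(\mathbf{D}'_i)$. This is a finiteness statement rather than an explicit computation: by Remark \ref{finitefc} the families $\{\rho_i(\mathbf{G}_i)\subset\GL_{N_i}\}$ and $\{\phi_i(\mathbf{H}_i)\subset\GL_{N_i}\}$, having bounded formal characters and $N_i\le N$, realize only finitely many pairs $(\mathbf{D}_i,\mathbf{D}'_i)$ up to the action of $\mathrm{Perm}(N_i)$ and up to $N_i\le N$; hence only finitely many sublattices $\Ann(\mathbf{D}_i),\Ann(\mathbf{D}'_i)$ of $\Z^{N_i}$ occur, hence only finitely many intersections $M_i$ occur, up to permutation of coordinates. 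For each of these finitely many lattices choose once and for all a basis, let $C'$ be the maximum absolute value of a coordinate appearing among all these finitely many bases, and take $R_i''$ to be that basis (transported by the relevant coordinate permutation, which does not change exponents). Then the common kernel of $R_i''$ is $M_i$'s common kernel, namely $\mathbf{D}_i\cdot\mathbf{D}'_i$ — here one should note $M_i$ is already saturated in $\Z^{N_i}$ as an intersection of saturated sublattices, so its common kernel is the connected group $\mathbf{D}_i\cdot\mathbf{D}'_i$ with no extra component group — and the exponents in $R_i''$ are bounded by $C'$ independently of $i$. By Proposition \ref{p203} this shows $\{\rho_i(\mathbf{G}_i)\cdot\phi_i(\mathbf{H}_i)\subset\GL_{N_i,F_i}\}_{i\in I}$ has bounded formal characters, completing the proof. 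The one subtlety to double-check is that ``common kernel of $R_i$ equals $\mathbf{D}_i$'' should be upgraded to ``$R_i$ spans $\Ann(\mathbf{D}_i)$'' before intersecting; this is harmless since replacing $R_i$ by a basis of the saturation of its span only decreases exponents, and the hypothesis via Proposition \ref{p203} gives us this for free after passing through the finitely many occurring cases.
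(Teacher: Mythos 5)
Your argument is correct and follows essentially the same route as the paper's (one-line) proof: use the commutativity hypothesis to see that a maximal torus of $\rho_i(\mathbf{G}_i)\cdot\phi_i(\mathbf{H}_i)$ is generated by maximal tori of the two factors, then combine Remark \ref{finitefc} (only finitely many diagonal subtori, hence finitely many pairs and finitely many products up to permutation) with Proposition \ref{p203} to get a uniform exponent bound. Your extra remarks about saturating the span of $R_i$ are inessential, since the finiteness step already supplies the bounded generating sets, but they introduce no error.
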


\begin{proof} It follows easily from Remark \ref{finitefc}, Proposition \ref{p203}, and the fact (by the commutativity hypothesis) that any maximal torus of $\rho_i(\mathbf{G}_i)\cdot \phi_i(\mathbf{H}_i)$ is generated by some maximal torus of $\rho_i(\mathbf{G}_i)$ and some maximal torus of $\phi_i(\mathbf{H}_i)$. \end{proof}

Let $\{\phi_\ell\}$ be the strictly compatible system of mod $\ell$ Galois representations arising from (Definition \ref{arise},\ref{comsys}) the dual system of $\ell$-adic representations $\{\Phi_\ell\}$. Denote the image of $\phi_\ell$ by $\bar{\Gamma}_\ell$ and the ambient space of the representation by $V_\ell\cong\mathbb{F}_\ell^N$ for each $\ell$. Each  $\bar{\Gamma}_\ell:=\phi_\ell(\mathrm{Gal}_K)$ is a subgroup of GL$_N(\F_\ell)$ for a fixed $N$. Suppose $K'$ is a finite normal extension of $K$. Since $[\phi_\ell(\mathrm{Gal}_K):\phi_\ell(\mathrm{Gal}_{K'})]\leq [K':K]$ for all $\ell$ and the restriction of $\{\phi_\ell\}$ to $\mathrm{Gal}_{K'}$ is semisimple \cite[Theorem 49.2]{CR} and satisfies the compatibility conditions (Definition \ref{comsys}), we are free to replace $K$ by $K'$ in the course of proving the main theorem. The main result of this section states that  for $\ell\gg1$, $\bar{\Gamma}_\ell$ can be approximated by some connected, reductive subgroup $\bar{\mathbf{G}}_\ell\subset\mathrm{GL}_{N,\mathbb{F}_\ell}$ with bounded formal characters (Definition \ref{bfc}').

\begin{customthm}{2.0.5}\label{redenv} \textit{Let $\{\phi_\ell\}_{\ell\in\mathscr{P}}$ be a system of mod $\ell$ Galois representations as above. There exist a finite normal extension $L$ of $K$ and a connected, $\mathbb{F}_\ell$-reductive subgroup $\bar{\mathbf{G}}_\ell$ of $\mathrm{GL}_{N,\mathbb{F}_\ell}$ for each $\ell\gg1$ such that 
\begin{enumerate}
\item[(i)] $\bar{\gamma}_\ell:=\phi_\ell(\mathrm{Gal}_L)$ is a subgroup of $\bar{\mathbf{G}}_\ell(\mathbb{F}_\ell)$ of uniformly bounded index,
\item[(ii)] the action of $\bar{\mathbf{G}}_\ell$ on $\bar{V}_\ell:=V_\ell\otimes\bar{\F}_\ell$ is semisimple,
\item[(iii)] the representations $\{\bar{\mathbf{G}}_\ell\hookrightarrow \GL_{N,\F_\ell}\}_{\ell\gg1}$ have bounded formal characters in the sense of Definition \ref{bfc}'.
\end{enumerate}}
\end{customthm}

\begin{definition}\label{env}
A system of connected reductive groups $\{\bar{\mathbf{G}}_\ell\}_{\ell\gg1}$ satisfying the conditions in the above theorem is called \textit{a system of algebraic envelopes} of $\{\bar{\Gamma}_\ell\}_{\ell\gg1}$. We say $\bar{\mathbf{G}}_\ell$ is the \emph{algebraic envelope} of $\bar{\Gamma}_\ell$ when a system of algebraic envelopes is given.
\end{definition}

We first establish in $\mathsection2.1-2.4$ essential ingredients of the proof of Theorem \ref{redenv}. Then the proof is presented in $\mathsection2.5$.

\subsection{Nori's theory}

The material in this subsection is due to Nori \cite{Nori}. Suppose $\ell>N-1$. Given a subgroup $\bar\Gamma$  of $\mathrm{GL}_N(\mathbb{F}_\ell)$, Nori's theory gives us a connected algebraic group $\bar{\mathbf{S}}_\ell$ that captures all the order $\ell$ elements of $\bar\Gamma$ if $\ell$ is bigger than a constant that only depends on $N$.

Let $\bar\Gamma[\ell]=\{x\in \bar\Gamma~|~ x^\ell=1\}$.  The normal subgroup of $\bar\Gamma$ generated by $\bar\Gamma[\ell]$ is denoted by $\bar\Gamma^+$. Define $\mathrm{exp}(x)$ and $\mathrm{log}(x)$ by
$$\mathrm{exp}(x)=\sum_{i=0}^{\ell-1}\frac{x^i}{i!}\hspace{.1in}\mathrm{and}\hspace{.1in}
\mathrm{log}(x)=-\sum_{i=1}^{\ell-1}\frac{(1-x)^i}{i}.$$
Denote by $\bar{\mathbf{S}}$ the (connected) algebraic subgroup of $\mathrm{GL}_{N,\F_\ell}$, defined over $\mathbb{F}_\ell$, generated by the one-parameter subgroups
\begin{equation*}
t\mapsto x^t:=\mathrm{exp}(t\cdot\mathrm{log}(x))
\end{equation*}
for all $x\in \bar\Gamma[\ell]$. Algebraic subgroups with the above property are said to be \textit{exponentially generated}. The theorem we need is stated below.

\begin{theorem}\label{N1}\cite[Theorem B(1), 3.6(v)]{Nori} There is a constant $c_0=c_0(N)$ such that if $\ell>c_0$ and $\bar\Gamma$ is a subgroup of $\mathrm{GL}_N(\mathbb{F}_\ell)$, then
\begin{enumerate}
\item[(i)] $\bar\Gamma^+=\bar{\mathbf{S}}(\mathbb{F}_\ell)^+$,
\item[(ii)] $\bar{\mathbf{S}}(\mathbb{F}_\ell)/\bar{\mathbf{S}}(\mathbb{F}_\ell)^+$ is a commutative group of order $\leq 2^{N-1}$.
\end{enumerate}
\end{theorem}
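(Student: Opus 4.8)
This is Nori's structure theorem \cite{Nori}, and the plan is to follow his argument; I record the main steps. \textbf{Step 1 (preliminaries and the easy inclusion).} First I would record that, since $\ell>N-1$, an element $x\in\mathrm{GL}_N(\overline{\mathbb F}_\ell)$ satisfies $x^\ell=1$ exactly when it is unipotent (so $(x-1)^N=0$); that then the series defining $\log(x)$ and $\exp$ truncate in degree $<N$ and have invertible coefficients; and that $x\mapsto\log(x)$ is a bijection from the order-dividing-$\ell$ unipotents onto the nilpotents $X$ with $X^N=0$, with inverse $X\mapsto\exp(X)$, intertwining $x\mapsto x^t$ with $X\mapsto tX$. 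Hence for $x\in\bar\Gamma[\ell]$ the map $t\mapsto x^t$ is a homomorphism $\mathbb G_a\to\mathrm{GL}_{N,\mathbb F_\ell}$ defined over $\mathbb F_\ell$, and $\bar{\mathbf S}$ is the group they generate; by Noetherianity $\bar{\mathbf S}$ is generated by finitely many of them, so it is connected and defined over $\mathbb F_\ell$. Conjugation by $\gamma\in\bar\Gamma$ permutes $\bar\Gamma[\ell]$ and sends the one-parameter subgroup through $x$ to that through $\gamma x\gamma^{-1}$, so $\bar\Gamma$ normalizes $\bar{\mathbf S}$ and hence $\bar{\mathbf S}(\mathbb F_\ell)^+$; since each $x\in\bar\Gamma[\ell]$ lies in $\bar{\mathbf S}(\mathbb F_\ell)$ with order dividing $\ell$, we get $\bar\Gamma[\ell]\subseteq\bar{\mathbf S}(\mathbb F_\ell)^+$, and as $\bar\Gamma^+$ is the normal closure of $\bar\Gamma[\ell]$ in $\bar\Gamma$ this yields $\bar\Gamma^+\subseteq\bar{\mathbf S}(\mathbb F_\ell)^+$.

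\textbf{Step 2 (the reverse inclusion, the crux).} The key input, to be proved uniformly in $\ell$ once $\ell>c_0(N)$, is: for any connected exponentially generated $\bar{\mathbf H}\subseteq\mathrm{GL}_{N,\mathbb F_\ell}$, a unipotent $y\in\bar{\mathbf H}(\mathbb F_\ell)$ has $\log(y)\in\mathrm{Lie}(\bar{\mathbf H})$, so that $t\mapsto\exp(t\log y)$ is a one-parameter unipotent subgroup lying in $\bar{\mathbf H}$; consequently $\bar{\mathbf H}(\mathbb F_\ell)^+$ is generated by the $\mathbb F_\ell$-points of the one-parameter unipotent subgroups of $\bar{\mathbf H}$, and --- using the Levi and root-subgroup structure of $\bar{\mathbf H}$, available for $\ell$ large --- already by those conjugate to a fixed set of generators. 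Applied to $\bar{\mathbf S}$, whose generators $t\mapsto x^t$ ($x\in\bar\Gamma[\ell]$) have $\mathbb F_\ell$-points $\{x^t:t\in\mathbb F_\ell\}=\{x^k:k\in\mathbb Z\}\subseteq\bar\Gamma^+$, this gives $\bar{\mathbf S}(\mathbb F_\ell)^+\subseteq\bar\Gamma^+$, hence (i). The constant $c_0(N)$ enters precisely here: $\bar{\mathbf H}$ has rank $\le N$, and one must take $\ell$ beyond the ``good characteristic'' thresholds for every root datum that can occur, so that $\exp/\log$ give an equivariant group-theoretic bijection between its nilpotent cone and its unipotent variety and $\mathrm{Lie}(\bar{\mathbf H})$ is spanned by $\{\log(x):x\in\bar{\mathbf H}(\mathbb F_\ell),\ x^\ell=1\}$.

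\textbf{Step 3 (part (ii)).} For the component-type bound I would argue: $\bar{\mathbf S}(\mathbb F_\ell)^+$ is normal (it is generated by a conjugation-stable set) and contains every element of order $\ell$, so the quotient has order prime to $\ell$. Writing $\bar{\mathbf S}^{\mathrm{red}}=\bar{\mathbf S}/R_u\bar{\mathbf S}$, which is semisimple since a reductive exponentially generated group equals its derived group, Lang's theorem gives $\bar{\mathbf S}(\mathbb F_\ell)\twoheadrightarrow\bar{\mathbf S}^{\mathrm{red}}(\mathbb F_\ell)$, while $R_u\bar{\mathbf S}(\mathbb F_\ell)$ is an $\ell$-group (each element unipotent of order dividing $\ell$) hence lies in $\bar{\mathbf S}(\mathbb F_\ell)^+$; comparing the two extensions yields $\bar{\mathbf S}(\mathbb F_\ell)/\bar{\mathbf S}(\mathbb F_\ell)^+\cong\bar{\mathbf S}^{\mathrm{red}}(\mathbb F_\ell)/\bar{\mathbf S}^{\mathrm{red}}(\mathbb F_\ell)^+$. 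For $\ell$ large, $\widetilde{\mathbf S}(\mathbb F_\ell)$ is generated by its unipotents (Steinberg), so for the simply connected cover $\pi\colon\widetilde{\mathbf S}\to\bar{\mathbf S}^{\mathrm{red}}$ one has $\pi(\widetilde{\mathbf S}(\mathbb F_\ell))=\bar{\mathbf S}^{\mathrm{red}}(\mathbb F_\ell)^+$, and the Lang cohomology sequence of $1\to\ker\pi\to\widetilde{\mathbf S}\to\bar{\mathbf S}^{\mathrm{red}}\to1$ identifies the quotient with $H^1(\mathbb F_\ell,\ker\pi)$; since $\ker\pi$ is a finite abelian group scheme inside $Z(\widetilde{\mathbf S})$ and $\bar{\mathbf S}^{\mathrm{red}}$ admits a faithful representation of dimension $\le N$, $|\ker\pi|$ divides $2^{N-1}$, so the quotient is commutative of order $\le 2^{N-1}$.

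The hard part will be Step 2 --- proving, uniformly over all large $\ell$, that the order-$\ell$ elements of a connected exponentially generated $\bar{\mathbf H}\subseteq\mathrm{GL}_{N,\mathbb F_\ell}$ all lie in one-parameter unipotent subgroups of $\bar{\mathbf H}$, and that these reproduce exactly the subgroup generated by the prescribed generators. This is where $c_0(N)$ must be made explicit enough (as a function of $N$ alone, via the rank bound on $\bar{\mathbf H}$) to force $\ell$ into good characteristic for every root datum that can occur, so that the $\exp/\log$ dictionary is a genuine equivariant isomorphism of varieties compatible with the group law; once that is available, Steps 1 and 3 are routine bookkeeping with normal closures, Levi decompositions, Lang's theorem, and a bound on the fundamental groups of semisimple subgroups of $\mathrm{GL}_N$.
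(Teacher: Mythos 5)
The statement you are proving is not proved in the paper at all: it is quoted verbatim from Nori \cite[Theorem B(1), 3.6(v)]{Nori}, so there is no in-paper argument to match against, and your proposal has to stand or fall as a reconstruction of Nori's proof. As such it has a genuine gap exactly where you flag it. Step 1 (the inclusion $\bar\Gamma^+\subseteq\bar{\mathbf S}(\mathbb F_\ell)^+$ and the basic $\exp/\log$ formalism) is fine, but Step 2 is the entire content of the theorem and the mechanism you sketch does not close it. Knowing that every order-$\ell$ element $y$ of $\bar{\mathbf S}(\mathbb F_\ell)$ has $\log(y)\in\mathrm{Lie}(\bar{\mathbf S})$, and even knowing that $\bar{\mathbf S}(\mathbb F_\ell)^+$ is generated by $\mathbb F_\ell$-points of one-parameter unipotent subgroups ``conjugate to a fixed set of generators,'' does not give $\bar{\mathbf S}(\mathbb F_\ell)^+\subseteq\bar\Gamma^+$: the conjugating elements live in $\bar{\mathbf S}(\mathbb F_\ell)$ (or $\bar{\mathbf S}(\bar{\mathbb F}_\ell)$), not in $\bar\Gamma$, so the $\mathbb F_\ell$-points of those conjugated subgroups have no reason to lie in $\bar\Gamma^+$ --- which is precisely what has to be proved. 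Nori's actual argument is of a different nature: a uniform-in-$\ell$ comparison of the abstract group generated by the curves $\{x^t\}$, $x\in\bar\Gamma[\ell]$, with $\bar{\mathbf S}(\mathbb F_\ell)$, using dominance of product maps of the generating curves together with point-counting (Lang--Weil type) bounds and the structure theory of $H(\mathbb F_\ell)^+$ developed in his \S2--3, to force the index to be bounded and prime to $\ell$; the ``good characteristic $\exp/\log$ dictionary'' you invoke is an ingredient, not the engine.

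Step 3 also contains an error in the one place where the constant $2^{N-1}$ has to come from. The claim that ``$|\ker\pi|$ divides $2^{N-1}$'' because $\bar{\mathbf S}^{\mathrm{red}}$ has a faithful representation of dimension $\le N$ is false as stated: for the adjoint group of type $A_2$ embedded in $\GL_8$ via the adjoint representation, $|\ker\pi|=3$, which does not divide any power of $2$. What is needed (and what Nori's 3.6(v) encodes) is an order bound on $H^1(\mathbb F_\ell,\ker\pi)\cong(\ker\pi)(\mathbb F_\ell)$ in terms of $N$, and that requires an actual argument relating the fundamental group of $\bar{\mathbf S}^{\mathrm{red}}$ to the weight multiset of the given $N$-dimensional faithful representation; it does not follow formally from the existence of such a representation. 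The surrounding reductions in Step 3 (quotient of order prime to $\ell$, passage to $\bar{\mathbf S}^{\mathrm{red}}$, Lang's theorem, Steinberg generation of the simply connected points) are reasonable, but without Step 2 and without a correct source for the $2^{N-1}$ bound the proposal is an outline of Nori's theorem rather than a proof of it.
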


\begin{proposition}\label{N2} Let $\bar{\mathbf{S}}_\ell$ be the algebraic group associated to $\bar\Gamma_\ell$ by Nori's theory for all $\ell>N-1$. There is a constant $c_1=c_1(N)>c_0(N)$ that depends only on $N$ such that if $\ell>c_1$, then the following hold:
\begin{enumerate}
\item[(i)] $\bar{\mathbf{S}}_\ell$ is a connected, exponentially generated, semisimple $\mathbb{F}_\ell$-subgroup of $\mathrm{GL}_{N,\mathbb{F}_\ell}$. 
\item[(ii)] $\bar{\mathbf{S}}_\ell$ acts semi-simply on the ambient space $\bar{V}_\ell\cong \bar{\mathbb{F}}_\ell^N$.
\item[(iii)] $[\bar{\mathbf{S}}_\ell(\mathbb{F}_\ell):\bar{\mathbf{S}}_\ell(\mathbb{F}_\ell)\cap \bar{\Gamma}_\ell]\leq 2^{N-1}$.
\end{enumerate}
\end{proposition}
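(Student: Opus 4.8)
The plan is to read the statement off from Nori's theory (Theorem~\ref{N1} and the construction of $\bar{\mathbf{S}}$ recalled above) together with the standing hypothesis that each $\phi_\ell$ is semisimple, i.e.\ that $\bar\Gamma_\ell$ acts completely reducibly on $V_\ell$. Connectedness, exponential generation and the fact that $\bar{\mathbf{S}}_\ell$ is defined over $\F_\ell$ are built into the definition of $\bar{\mathbf{S}}$, so the only additional content of (i) is that $\bar{\mathbf{S}}_\ell$ is \emph{semisimple} as an algebraic group, which I will deduce from (ii). Part (iii) should come for free: since any conjugate of an order-$\ell$ element is again of order $\ell$, the set $\bar\Gamma_\ell[\ell]$ is conjugation-stable in $\bar\Gamma_\ell$, so the subgroup it generates is already normal and hence equals $\bar\Gamma_\ell^+$; then Theorem~\ref{N1}(i) gives $\bar\Gamma_\ell^+=\bar{\mathbf{S}}_\ell(\F_\ell)^+\subseteq\bar\Gamma_\ell\cap\bar{\mathbf{S}}_\ell(\F_\ell)$, and therefore
\[
[\bar{\mathbf{S}}_\ell(\F_\ell):\bar{\mathbf{S}}_\ell(\F_\ell)\cap\bar\Gamma_\ell]\le[\bar{\mathbf{S}}_\ell(\F_\ell):\bar{\mathbf{S}}_\ell(\F_\ell)^+]\le 2^{N-1}
\]
by Theorem~\ref{N1}(ii).

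The substance is (ii). First I would note that, by Clifford's theorem, the normal subgroup $\bar\Gamma_\ell^+$ of $\bar\Gamma_\ell$ acts semisimply on $V_\ell$, and since $\F_\ell$ is perfect and $\bar\Gamma_\ell^+$ is finite this persists under base change, so $\bar\Gamma_\ell^+$ acts semisimply on $\bar V_\ell$. Next I would show that for $\ell>N$ the lattice of $\bar{\mathbf{S}}_\ell$-stable subspaces of $\bar V_\ell$ coincides with the lattice of $\bar\Gamma_\ell^+$-stable subspaces: each $x\in\bar\Gamma_\ell[\ell]$ is unipotent, so $1-x$ is nilpotent with $(1-x)^N=0$, hence for $\ell>N$ the truncated $\log(x)$ is a genuine nilpotent matrix with $\exp(\log x)=x$, and $\log x$ and $x-1$ are polynomials in one another; consequently a subspace is stable under the one-parameter subgroup $t\mapsto x^t=\exp(t\log x)$ iff it is stable under $\log x$ iff it is stable under $x$. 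Since the $x^t$ for $x\in\bar\Gamma_\ell[\ell]$ generate $\bar{\mathbf{S}}_\ell$ while $\bar\Gamma_\ell[\ell]$ generates $\bar\Gamma_\ell^+$, the two lattices agree; and as complete reducibility of a module depends only on its lattice of submodules, it follows that $\bar{\mathbf{S}}_\ell$ acts semisimply on $\bar V_\ell$.

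Finally (i) follows formally from (ii). The embedding $\bar{\mathbf{S}}_\ell\hookrightarrow\GL_{N,\F_\ell}$ is a faithful semisimple representation, and in each simple $\bar{\mathbf{S}}_\ell$-summand of $\bar V_\ell$ the invariants of the normal subgroup $R_u(\bar{\mathbf{S}}_\ell)$ form a nonzero $\bar{\mathbf{S}}_\ell$-stable subspace, hence the whole summand; so $R_u(\bar{\mathbf{S}}_\ell)$ acts trivially, hence is trivial, so $\bar{\mathbf{S}}_\ell$ is reductive. A connected reductive group generated by unipotent one-parameter subgroups equals its derived group, because the image of each such $\mathbb{G}_a$ in the torus quotient $\bar{\mathbf{S}}_\ell/[\bar{\mathbf{S}}_\ell,\bar{\mathbf{S}}_\ell]$ is trivial; hence $\bar{\mathbf{S}}_\ell$ is semisimple. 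The constant $c_1(N)$ is then taken larger than $c_0(N)$, larger than $N-1$ (so that $\bar{\mathbf{S}}_\ell$ is defined at all), and larger than whatever Nori's estimates demand. The one delicate point — and the main obstacle — is precisely the calibration of this constant in the submodule comparison: one must be sure $\ell$ is large enough that the truncated $\exp$ and $\log$ are genuine mutual inverses on unipotent elements of $\GL_N(\bar\F_\ell)$ and that $t\mapsto x^t$ really is an algebraic one-parameter unipotent subgroup. This quantitative input is exactly the part supplied by \cite{Nori}, where the comparison of $\bar{\mathbf{S}}_\ell$-stable and $\bar\Gamma_\ell^+$-stable subspaces can also be found directly (see \cite[\S3]{Nori}).
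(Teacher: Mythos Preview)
Your argument is correct. The treatment of (iii) matches the paper's exactly, and your derivation of semisimplicity in (i) from (ii) is the same in spirit (reductive because a faithful semisimple representation kills the unipotent radical; semisimple because the group is generated by unipotents), though the paper phrases the reductivity step via $\F_\ell$-points: it observes that $\bar{\mathbf{S}}_\ell(\F_\ell)^+=\bar\Gamma_\ell^+$ acts semisimply and therefore has no nontrivial normal $\ell$-subgroup, whereas a nontrivial unipotent radical, being defined over $\F_\ell$, would contribute one.

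The genuine difference is in (ii). The paper does not carry out the submodule-lattice comparison you sketch; it simply invokes \cite[Theorem~24]{EHK} (and \cite{SL2}) to pass from semisimplicity of $\bar\Gamma_\ell^+$ on $\bar V_\ell$ to semisimplicity of $\bar{\mathbf{S}}_\ell$, for $\ell$ beyond some $c_1(N)$. Your route is more elementary and self-contained: once $\ell>N$ the truncated $\exp$/$\log$ are honest mutual inverses on unipotents of $\GL_N$, so for each $x\in\bar\Gamma_\ell[\ell]$ the operators $x-1$, $\log x$, and hence all $x^t$, stabilise the same subspaces; since stabilisers are Zariski-closed, this identifies $\bar{\mathbf{S}}_\ell$-stable and $\bar\Gamma_\ell^+$-stable subspaces, and semisimplicity transfers. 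This is essentially Nori's own argument, as you note. What the paper's citation buys is brevity and an off-the-shelf constant; what your argument buys is that the reader sees exactly why the constant depends only on $N$ (indeed $\ell>N$ suffices for this step) and does not need to chase an external reference.
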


\begin{proof} Since $\bar{\Gamma}_\ell$ acts semi-simply on $\bar{V}_\ell$, so does $\bar{\Gamma}_\ell^+$ \cite[Theorem 49.2]{CR}. Part (ii) then follows from \cite[Theorem 24]{EHK} for some sufficiently large constant $c_1(N)$ ($>c_0(N)$) depending only on $N$, see also \cite{SL2}. Since $\ell>c_0(N)$, $\bar{\mathbf{S}}_\ell(\mathbb{F}_\ell)^+=\bar{\Gamma}_\ell^+$ (Theorem \ref{N1}) also acts semi-simply on $\bar{V}_\ell$. This implies  $\bar{\mathbf{S}}_\ell(\mathbb{F}_\ell)^+$ cannot have normal $\ell$-subgroup. If $\bar{\mathbf{S}}_\ell$ has a non-trivial  unipotent radical $\bar{\mathbf{U}}_\ell$, then $\bar{\mathbf{U}}_\ell$ is defined over $\mathbb{F}_\ell$ \cite[Proposition 14.4.5(v)]{Springbk} and $\bar{\mathbf{U}}_\ell(\mathbb{F}_\ell)$ is then a non-trivial normal $\ell$-group of $\bar{\mathbf{S}}_\ell(\mathbb{F}_\ell)^+$ which is a contradiction. Therefore $\bar{\mathbf{S}}_\ell$ is reductive. $\bar{\mathbf{S}}_\ell$ is actually semisimple since it is generated by unipotent elements $\bar{\Gamma}_\ell^+$. This proves (i).
Since $\ell>c_0(N)$,
(iii) is proved by Theorem \ref{N1}. 
\end{proof}

\begin{definition}\label{Nori}
Define the \emph{semisimple envelope}  of $\bar{\Gamma}_\ell$  for all sufficiently large $\ell$ as the connected, semisimple $\F_\ell$-algebraic group $\bar{\mathbf{S}}_\ell$ in Proposition \ref{N2}.  
\end{definition}

\begin{remark}\label{ind}If $K'$ is a finite extension of $K$, then the semisimple envelopes of $\phi_\ell(\mathrm{Gal}_{K'})$ and $\phi_\ell(\mathrm{Gal}_K)$ are identical for  $\ell\gg1$ because the order $\ell$ elements 
of the two finite groups are the same when $\ell$ is large.\end{remark}

\subsection{Characters of tame inertia group} 

Let $\rho_\ell:\mathrm{Gal}_K\rightarrow \mathrm{GL}_N(\mathbb{F}_\ell)$ be a continuous representation and $I_{\bar{v}}$ the inertia subgroup of $\mathrm{Gal}_K$ at $\bar{v}\in\Sigma_{\bar{K}}$ that divides $\ell$. Let $I_{\bar{v}}^\mathrm{w}$ be the wild inertia (normal) subgroup of $I_{\bar{v}}$ and $\rho_{\bar{v}}^{\ss}$ the semi-simplification of the restriction of $\rho_\ell$ to $I_{\bar{v}}$.   Since $\rho_\ell^{\ss}(I_{\bar{v}}^\mathrm{w})$ is an $\ell$-group and semisimple on $\F_\ell^N$, $\rho_{\bar{v}}^{\ss}(I_{\bar{v}}^\mathrm{w})=\{1\}$ and $\rho_{\bar{v}}^{\ss}$ factors through a representation of the tame inertia group $I_{\bar{v}}^\mathrm{t}:=I_{\bar{v}}/I_{\bar{v}}^\mathrm{w}$ (still denoted by $\rho_{\bar{v}}^{\ss}$):
\begin{equation*}
\rho_{\bar{v}}^{\ss}: I_{\bar{v}}^\mathrm{t}\to \mathrm{GL}_N(\mathbb{F}_\ell).
\end{equation*}
The tame inertia group $I_{\bar{v}}^\mathrm{t}$ is a projective limit of cyclic groups of order prime to $\ell$ \cite[Proposition 2]{S72}
\begin{equation*}
\theta_{\bar{v}}: I_{\bar{v}}^\mathrm{t}\stackrel{\cong}{\longrightarrow} \varprojlim_{k} \mathbb{F}_{\ell^k}^*
\end{equation*}
where the projective system is given by norm maps of finite fields of characteristic $\ell$. The isomorphism is unique up to action of $\Gal_{\F_\ell}$ on the target.

\begin{definition}\label{fund} The \textit{fundamental characters} of $I_{\bar{v}}^\mathrm{t}$ of level $d$ \cite[$\mathsection1.7$]{S72} are defined as 
\begin{equation*}
\theta_{d}^{\ell^j},~~~~~ j=0,1,...,d-1
\end{equation*}
where $\theta_{d}:I_{\bar{v}}^\mathrm{t}\stackrel{\theta_{\bar{v}}}{\longrightarrow} \varprojlim_k \mathbb{F}_{\ell^k}^*\twoheadrightarrow \mathbb{F}_{\ell^d}^*\hookrightarrow \bar{\F}_\ell^*$.\\
\end{definition}

Any continuous character $\chi: I_{\bar{v}}^\mathrm{t}\rightarrow \bar{\mathbb{F}}_\ell^*$ of $\rho_{\bar{v}}^{\ss}$ factors through a power of some $\theta_d$. Character theory says that $\mathrm{Hom}(\mathbb{F}_{\ell^d}^*,\bar{\mathbb{F}}_\ell^*)\cong\mathrm{Hom}(\mathbb{F}_{\ell^d}^*,\mathbb{C}^*)$ is cyclic generated by $\theta_d$ of order $\ell^d-1$. Therefore, $\chi$ can always be expressed as a product of fundamental characters of level $d$
\begin{equation*}
\chi=(\theta_d)^{m_0}\cdot(\theta_d^{\ell})^{m_1}\cdots(\theta_d^{\ell^{d-1}})^{m_{d-1}}
\end{equation*}

\begin{definition}\label{restrict} Let $\chi: I_{\bar{v}}^\mathrm{t}\rightarrow \bar{\mathbb{F}}_\ell^*$ be a character of $\rho_{\bar{v}}^{\ss}$ and express  $\chi$ as a product of fundamental characters of level $d$ as above.
\begin{enumerate}
\item[(i)] The product is said to be \emph{$\ell$-restricted} if $0\leq m_i\leq \ell-1$ for all $i$ and not all $m_i$ equal to $\ell-1$. It is easy to see that $\ell$-restricted expression of $\chi$ is unique.
\item[(ii)] The \emph{exponents} of $\chi$ are defined to be the 
multiset of powers $\{m_0,m_1,...,m_{d-1}\}$ in the $\ell$-restricted product.
Note that the multiset is independent of the action of $\Gal_{\F_\ell}$ on the target.
\end{enumerate}
\end{definition}

\begin{lemma}\label{221} Let $V\cong\mathbb{F}_\ell^n$ be a continuous, irreducible subrepresentation of $\rho_{\bar{v}}$, then the characters of the representation can be written as a product of fundamental characters of level $n$.\end{lemma}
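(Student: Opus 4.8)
The plan is to reduce the statement to the basic structure theory of continuous representations of the tame inertia group $I_{\bar v}^{\mathrm t}$ that was recalled just above. First I would note that since $\rho_{\bar v}^{\mathrm{ss}}(I_{\bar v}^{\mathrm w}) = \{1\}$, an irreducible $I_{\bar v}$-subrepresentation $V$ of $\rho_{\bar v}$ is automatically a representation of the \emph{tame} quotient $I_{\bar v}^{\mathrm t}$ after semi-simplification; since $V$ is already irreducible, $\rho_{\bar v}|_V$ factors through $I_{\bar v}^{\mathrm t}$ directly. Now $I_{\bar v}^{\mathrm t} \cong \varprojlim_k \mathbb{F}_{\ell^k}^*$ is pro-cyclic of order prime to $\ell$, so by continuity the action of $I_{\bar v}^{\mathrm t}$ on $V \cong \mathbb{F}_\ell^n$ factors through a finite cyclic quotient $\mathbb{F}_{\ell^m}^*$ for some $m$; because this quotient has order prime to $\ell$, the group algebra $\mathbb{F}_\ell[\mathbb{F}_{\ell^m}^*]$ is semisimple, so $V$ becomes a sum of characters after base change to $\bar{\mathbb F}_\ell$, and these characters are powers of the generator $\theta_m$ of $\Hom(\mathbb{F}_{\ell^m}^*, \bar{\mathbb F}_\ell^*)$.

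The substantive point is to pin down $m = n$, i.e. that the level of the fundamental characters occurring can be taken to equal $\dim V$. Here I would use the standard argument: because $V$ is \emph{irreducible over $\mathbb{F}_\ell$} (not merely over $\bar{\mathbb F}_\ell$), the set of characters $\{\chi, \chi^\ell, \dots\}$ appearing in $V \otimes \bar{\mathbb F}_\ell$ forms a single orbit under $\Gal_{\mathbb{F}_\ell} = \langle \mathrm{Frob}_\ell\rangle$ — otherwise the sub-sum over a proper sub-orbit would descend to a proper nonzero $\mathbb{F}_\ell$-subrepresentation, contradicting irreducibility. Moreover these characters are distinct (a repeated character would again produce a smaller Galois-stable sub-sum, hence a proper $\mathbb{F}_\ell$-subrepresentation by the same descent), so the orbit has exactly $n$ elements and $V \otimes \bar{\mathbb F}_\ell = \bigoplus_{j=0}^{n-1} \chi^{\ell^j}$. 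Writing $\chi$ as a power of $\theta_d$ for the minimal level $d$, the orbit $\{\chi^{\ell^j}\}$ has size $d$, forcing $d = n$; hence every character of $\rho_{\bar v}|_V$ is $\chi^{\ell^j} = (\theta_n^{\ell^j})^{a}$ for the appropriate exponent $a$, i.e. a product (indeed a power) of fundamental characters of level $n$, as claimed.

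The main obstacle — really the only place requiring care — is the descent step: verifying that irreducibility over the non-algebraically-closed field $\mathbb{F}_\ell$ forces the weight multiset of $V\otimes\bar{\mathbb F}_\ell$ to be a single free $\mathrm{Frob}_\ell$-orbit of size exactly $n$. This is where one must be slightly careful that the $\Gal_{\mathbb{F}_\ell}$-action on characters is compatible with the $\mathbb{F}_\ell$-structure on $V$, and that a $\Gal_{\mathbb{F}_\ell}$-stable subset of weights genuinely cuts out an $\mathbb{F}_\ell$-rational subspace; both follow from Galois descent for $\mathbb{F}_\ell \subset \bar{\mathbb F}_\ell$ applied to the isotypic decomposition, but it is the step worth spelling out in the write-up. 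Everything else is formal once the pro-cyclic, prime-to-$\ell$ structure of $I_{\bar v}^{\mathrm t}$ is invoked.
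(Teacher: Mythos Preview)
Your proposal is correct. Both your argument and the paper's hinge on the same fact---that an irreducible $\mathbb{F}_\ell$-representation of a cyclic prime-to-$\ell$ group decomposes over $\bar{\mathbb{F}}_\ell$ as a single Frobenius orbit of $n$ distinct characters---but the routes differ in presentation. The paper argues module-theoretically: it takes a generator $x$ of the cyclic image, observes that its minimal polynomial $f$ is separable and (by irreducibility of $V$) irreducible of degree $n$, so $V \cong \mathbb{F}_\ell[x]/(f) \cong \mathbb{F}_{\ell^n} =: F$ as a one-dimensional $F$-vector space with the image acting through $F^*$; the characters then drop out of the explicit decomposition $F\otimes_{\mathbb{F}_\ell} F \to F^{\oplus n}$, $x\otimes y \mapsto (xy, x^\ell y, \dots, x^{\ell^{n-1}}y)$. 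You instead invoke Galois descent on the isotypic decomposition of $V\otimes\bar{\mathbb{F}}_\ell$ to conclude the weights form a free $\mathrm{Frob}_\ell$-orbit of size $n$. The paper's route is more concrete and avoids naming descent, while yours is cleaner conceptually and makes the role of the $\mathbb{F}_\ell$-rationality hypothesis more transparent; either is perfectly acceptable here.
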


\begin{proof} For simplicity, assume $\rho_{\bar{v}}$ is irreducible. The image of $I_{\bar{v}}^\mathrm{t}$ in $\mathrm{GL}(V)$ is a cyclic group of order prime to $\ell$, therefore $V$ is a $\mathbb{F}_\ell[x]/(f(x))$-module where $x$ corresponds to a generator of the cyclic image and the minimal polynomial $f(x)$ is separable. Irreducibility of $V$ implies $f(x)$ is irreducible over $\mathbb{F}_\ell$. Thus $\rho_{\bar{v}}(I_{\bar{v}}^\mathrm{t})$ is contained in a maximal subfield $F$ of $\mathrm{End}(V)$ and $\rho_{\bar{v}}:I_{\bar{v}}^\mathrm{t}\rightarrow F^*\subset \mathrm{GL}(V)$ can be written as a product of fundamental characters of level $n$ as above. On the other hand, $V$ has a structure of $F$-vector space of dimension $1$ such that the action of $\rho_{\bar{v}}(I_{\bar{v}}^\mathrm{t})\subset F^*$ is through field multiplication. By tensoring $F$ with $F$ (on the right) over $\mathbb{F}_\ell$, we obtain an $F$-isomorphism 
\[ \begin{array}{lcl}

F\otimes F\to F\oplus F\oplus\cdots\oplus F \\
~~x\otimes y\mapsto (xy,x^\ell y,...,x^{\ell^{n-1}}y)
\end{array}
\]
where $x,x^\ell,...,x^{\ell^{n-1}}$ are just conjugate of $x$ over $\mathbb{F}_\ell$. If $x\in \rho_{\bar{v}}(I_{\bar{v}}^\mathrm{t})\subset F^*$, then we see the action of $I_{\bar{v}}^\mathrm{t}$ on $V\otimes_{\mathbb{F}_\ell} F$ is a direct sum of products of fundamental characters of level $n$.\end{proof}

\subsection{Exponents of characters arising from \'etale cohomology} 

Every character $\chi$ of $\rho_{\bar{v}}^{\ss}: I_{\bar{v}}^\mathrm{t}\rightarrow \mathrm{GL}_N(\mathbb{F}_\ell)$ can be written as \begin{equation*}
\chi=(\theta_n)^{m_0}\cdot(\theta_n^{\ell})^{m_1}\cdots(\theta_n^{\ell^{n-1}})^{m_{n-1}},
\end{equation*}
a product of fundamental characters of level $n\leq N$ by Lemma \ref{221}. One would like to study the exponents $m_0,...,m_{n-1}$ (Definition \ref{restrict}) and in the case of \'etale cohomology we have the following theorem proved by Caruso \cite{Car}.

\begin{theorem}\label{231}(Serre's tame inertia conjecture)\label{conj} Let $X$ be a proper and smooth variety over a local field $K$ (a finite extension of $\Q_\ell$) with semi-stable reduction over $\mathscr{O}_{K}$, the ring of integers of $K$ and $i$ an integer. The Galois group $\mathrm{Gal}_K$ acts on $H^i_{\mathrm{\acute{e}t}}(X_{\bar{K}},\mathbb{Z}/\ell\mathbb{Z})^\vee$, the $\mathbb{F}_\ell$-dual of the $i$th cohomology group with $\mathbb{Z}/\ell\mathbb{Z}$ coefficients. If we restrict the representation to the inertia group of $\mathrm{Gal}_K$, then the exponents of the characters of the tame inertia group on any Jordan-Holder quotient of $H^i_{\mathrm{\acute{e}t}}(X_{\bar{K}},\mathbb{Z}/\ell\mathbb{Z})^\vee$ are between $0$ and $ei$ where $e$ 
is the ramification index of $K/\Q_\ell$.\\\end{theorem}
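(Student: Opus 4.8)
The plan is to deduce Theorem \ref{231} from integral $p$-adic Hodge theory, following Caruso \cite{Car}; here I only sketch the structure of the argument, writing $p$ for the prime $\ell$. First I would normalize the situation: by an unramified base change we may assume the residue field of $K$ is perfect, indeed as large as the combinatorics below requires, since an unramified extension acts trivially on $I_{\bar v}^{\mathrm{t}}$ and changes neither $e$ nor the semi-simplified tame inertia action. Fixing a semistable model of $X$ over $\mathscr{O}_K$, the object to control is the torsion $\mathrm{Gal}_K$-module $T:=H^i_{\mathrm{\acute{e}t}}(X_{\bar K},\Z/p\Z)^\vee$, and only its Jordan--H\"older constituents as an $\F_p[\mathrm{Gal}_K]$-module matter (dualization being a formal operation within the formalism below).

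The first real step is to realize $T$ inside integral $p$-adic Hodge theory. Semistable reduction makes $T$ a torsion semistable representation, so by the torsion comparison theorem (Breuil, Caruso) one obtains $T\cong T^*_{\mathrm{st}}(\mathcal M)$, where $\mathcal M$ is a Breuil module, i.e.\ a $(\varphi,N,\mathrm{Fil})$-module over $S_1=S/pS$ --- with $S$ the divided-power envelope attached to an Eisenstein polynomial $E(u)$ of degree $e$ --- arising from the log-crystalline cohomology of the special fibre, and $T^*_{\mathrm{st}}$ is the associated period functor. The only input from the cohomological degree is that the filtration on $\mathcal M$ sits in degrees $[0,i]$. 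For $p$ large relative to $i$, which is the only range used in this paper since $i\le 2\dim X$ is fixed, this lies in the classical Fontaine--Laffaille/Breuil range; the statement in general is Caruso's theorem.

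The heart of the argument is then an explicit computation of the tame inertia action through $T^*_{\mathrm{st}}$. After d\'evissage, a Jordan--H\"older quotient of $T$ corresponds to a simple sub-quotient Breuil module, and restricting to $I_{\bar v}$ and extending scalars decomposes its image as a sum of characters of $I_{\bar v}^{\mathrm{t}}$. Writing each such character as a product of fundamental characters and tracking how a uniformizer of $K$ acts through the period ring, I would show that each exponent is bounded by $(\deg E(u))\times(\text{top filtration degree of }\mathcal M)\le e\cdot i$: the factor $e$ comes from $E(u)$ being Eisenstein of degree $e$, and the factor $i$ from the filtration living in $[0,i]$. That Breuil modules are stable under sub-quotients with the same weight bound is what lets the estimate pass to every Jordan--H\"older quotient.

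The hard part is the first step: the integral comparison theorem for \emph{torsion} coefficients in the semistable case. The rational statement via $D_{\mathrm{st}}$ is standard, but controlling a $\Z/p$-coefficient cohomology group requires the full apparatus of strongly divisible lattices and Breuil modules together with the log-syntomic/log-crystalline comparison --- the technical core supplied by Caruso. Granting that, the exponent bound is a finite, essentially combinatorial calculation inside the period ring. (A contemporary alternative would run the same outline with Breuil--Kisin modules or prismatic cohomology, where the ramification/filtration bookkeeping is cleaner, but the precise statement needed is Caruso's, which we quote.)
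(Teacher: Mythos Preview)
The paper does not prove Theorem~\ref{231} at all: it is stated as a black-box input, attributed to Caruso~\cite{Car}, and immediately used to deduce Theorem~\ref{233}. So there is no ``paper's own proof'' to compare against---the paper treats Serre's tame inertia conjecture as an external theorem, exactly as one would cite the Weil conjectures or Faltings's theorem.

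Your sketch is a reasonable high-level outline of Caruso's argument via Breuil modules and the torsion comparison theorem in the semistable case, and your identification of where the bound $ei$ comes from (degree $e$ of the Eisenstein polynomial times filtration range $[0,i]$) is correct in spirit. But since the paper simply quotes the result, your proposal goes well beyond what is required here. If the intent is to match the paper, the appropriate ``proof'' is a one-line citation: this is Caruso's theorem~\cite{Car}. If the intent is to actually supply a proof, be aware that what you have written is a sketch with substantial gaps---the torsion log-crystalline comparison and the precise d\'evissage of simple Breuil modules are each significant pieces of work, and you have (correctly) flagged the first of these as the hard part without indicating how it is done.
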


We now relate our mod $\ell$ Galois representation $\phi_\ell$ to representation $H^i_{\mathrm{\acute{e}t}}(X_{\bar{K}},\mathbb{Z}/\ell\mathbb{Z})^\vee$ in Theorem \ref{231}. Cohomology group $H^i_{\mathrm{\acute{e}t}}(X_{\bar{K}},\mathbb{Z}_\ell)$ is a finitely generated, free $\mathbb{Z}_\ell$-module \cite{Gab} for $\ell\gg1$:
\begin{equation*}
H^i_{\mathrm{\acute{e}t}}(X_{\bar{K}},\mathbb{Z}_\ell)\cong \mathbb{Z}_\ell\oplus\cdots\oplus\mathbb{Z}_\ell.
\end{equation*}
Reduction mod $\ell$ gives 
\begin{equation*}
H^i_{\mathrm{\acute{e}t}}(X_{\bar{K}},\mathbb{Z}_\ell)\otimes \F_\ell= \mathbb{Z}/\ell\mathbb Z\oplus\cdots\oplus\mathbb{Z}/\ell\mathbb Z
\end{equation*}
and the semi-simplification of $H^i_{\mathrm{\acute{e}t}}(X_{\bar{K}},\mathbb{Z}_\ell)\otimes \F_\ell$ is then
isomorphic to the semi-simplification of a mod $\ell$ reduction of 
$\ell$-adic representation $H^i_{\mathrm{\acute{e}t}}(X_{\bar{K}},\Q_\ell)$ by Brauer-Nesbitt \cite[Theorem 30.16]{CR}. Since the sequence
\begin{equation*} H^i_{\mathrm{\acute{e}t}}(X_{\bar{K}},\mathbb{Z}_\ell)\stackrel{\ell}{\rightarrow} H^i_{\mathrm{\acute{e}t}}(X_{\bar{K}},\mathbb{Z}_\ell)\rightarrow H^i_{\mathrm{\acute{e}t}}(X_{\bar{K}},\mathbb{Z}/\ell\mathbb{Z})\end{equation*}
 is exact \cite[Theorem 19.2]{Milne}, $H^i_{\mathrm{\acute{e}t}}(X_{\bar{K}},\mathbb{Z}_\ell)\otimes \F_\ell$ is isomorphic to $H^i_{\mathrm{\acute{e}t}}(X_{\bar{K}},\mathbb{Z}/\ell\mathbb{Z})$. Recall $V_\ell$ is the semi-simplification of a mod $\ell$ reduction of 
$H^i_{\mathrm{\acute{e}t}}(X_{\bar{K}},\Q_\ell)^\vee$.
Thus, we conclude that

\begin{proposition}\label{232} For all sufficiently large $\ell$, $H^i_{\mathrm{\acute{e}t}}(X_{\bar{K}},\mathbb{Z}_\ell)\otimes \F_\ell$ is isomorphic to $H^i_{\mathrm{\acute{e}t}}(X_{\bar{K}},\mathbb{Z}/\ell\mathbb{Z})$ and the semi-simplification of $H^i_{\mathrm{\acute{e}t}}(X_{\bar{K}},\mathbb{Z}/\ell\mathbb{Z})$ is $V_\ell^\vee$.\end{proposition}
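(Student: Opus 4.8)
The plan is to deduce the statement from three inputs already assembled in the discussion preceding it: the torsion-freeness of integral $\ell$-adic \'etale cohomology for $\ell\gg1$ (Gabber, \cite{Gab}), the coefficient exact sequence \cite[Theorem 19.2]{Milne}, and the Brauer--Nesbitt invariance of semisimplification \cite[Theorem 30.16]{CR}.

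First I would fix the range of primes. By \cite{Gab} there is a finite set $S_0$ of primes, depending only on $X$, such that $H^j_{\mathrm{\acute{e}t}}(X_{\bar{K}},\Z_\ell)$ is a finitely generated free $\Z_\ell$-module for every $j$ whenever $\ell\notin S_0$; I enlarge the implicit ``$\ell\gg1$'' to force $\ell\notin S_0$, so in particular both $H^i$ and $H^{i+1}$ are free over $\Z_\ell$. Next I consider the long exact cohomology sequence attached to $0\to\Z_\ell\xrightarrow{\ell}\Z_\ell\to\Z/\ell\Z\to 0$,
\[
H^i_{\mathrm{\acute{e}t}}(X_{\bar{K}},\Z_\ell)\xrightarrow{\ \ell\ }H^i_{\mathrm{\acute{e}t}}(X_{\bar{K}},\Z_\ell)\longrightarrow H^i_{\mathrm{\acute{e}t}}(X_{\bar{K}},\Z/\ell\Z)\longrightarrow H^{i+1}_{\mathrm{\acute{e}t}}(X_{\bar{K}},\Z_\ell),
\]
exact by \cite[Theorem 19.2]{Milne}. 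The image of the last map lies in the $\ell$-torsion of $H^{i+1}_{\mathrm{\acute{e}t}}(X_{\bar{K}},\Z_\ell)$, which is $0$ since $H^{i+1}$ is torsion-free; hence the middle arrow identifies $H^i_{\mathrm{\acute{e}t}}(X_{\bar{K}},\Z_\ell)\otimes\F_\ell$ with $H^i_{\mathrm{\acute{e}t}}(X_{\bar{K}},\Z/\ell\Z)$ as $\mathrm{Gal}_K$-modules. This is the first assertion.

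For the second assertion I would argue as follows. The $\Z_\ell$-module $H^i_{\mathrm{\acute{e}t}}(X_{\bar{K}},\Z_\ell)$ is a $\mathrm{Gal}_K$-stable lattice in $H^i_{\mathrm{\acute{e}t}}(X_{\bar{K}},\Q_\ell)$, so $H^i_{\mathrm{\acute{e}t}}(X_{\bar{K}},\Z_\ell)\otimes\F_\ell$ is one of the mod $\ell$ reductions of the $\ell$-adic representation $H^i_{\mathrm{\acute{e}t}}(X_{\bar{K}},\Q_\ell)$; dualizing the lattice and using that reduction commutes with $\Z_\ell$-duality, its $\F_\ell$-dual is a mod $\ell$ reduction of $H^i_{\mathrm{\acute{e}t}}(X_{\bar{K}},\Q_\ell)^\vee$. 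By Brauer--Nesbitt \cite[Theorem 30.16]{CR} the semisimplification of a mod $\ell$ reduction does not depend on the chosen lattice, so $\big((H^i_{\mathrm{\acute{e}t}}(X_{\bar{K}},\Z_\ell)\otimes\F_\ell)^\vee\big)^{\ss}=V_\ell$ by the very definition of $V_\ell$. Since semisimplification commutes with duality, $(H^i_{\mathrm{\acute{e}t}}(X_{\bar{K}},\Z_\ell)\otimes\F_\ell)^{\ss}=V_\ell^\vee$, and combining with the identification of the previous paragraph gives that the semisimplification of $H^i_{\mathrm{\acute{e}t}}(X_{\bar{K}},\Z/\ell\Z)$ is $V_\ell^\vee$.

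There is no genuine difficulty here; the only point deserving attention --- the ``hard part'', such as it is --- is the uniformity in $\ell$, namely that one must invoke the form of Gabber's theorem excluding only finitely many primes at once, simultaneously in all cohomological degrees, rather than a bound growing with $i$ or $\dim X$. Everything else is a formal diagram chase together with Brauer--Nesbitt, both already used in the text above.
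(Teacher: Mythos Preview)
Your proposal is correct and follows essentially the same route as the paper: Gabber's torsion-freeness result, the coefficient long exact sequence from \cite[Theorem 19.2]{Milne}, and Brauer--Nesbitt. If anything you are slightly more careful than the paper, which writes only the three-term piece $H^i\xrightarrow{\ell}H^i\to H^i(\Z/\ell\Z)$ and leaves the surjectivity implicit; your explicit use of the vanishing of $\ell$-torsion in $H^{i+1}$ makes that step transparent.
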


The following theorem is the main result of this subsection.

\begin{theorem}\label{233} Let $K$ be a number field. Let $\phi_\ell: \mathrm{Gal}_K\rightarrow \mathrm{GL}(V_\ell)\cong\mathrm{GL}_N(\mathbb{F}_\ell)$ be the mod $\ell$ Galois representation arising from \'etale cohomology group $H^i_{\mathrm{\acute{e}t}}(X_{\bar{K}},\Q_\ell)^\vee$ for sufficiently large $\ell$. If we restrict $\phi_\ell$ to the inertia group $I_{\bar{v}}$ of a valuation $\bar{v}|\ell$ of $\bar{K}$ and semi-simplify the representation, then every character $\chi$ of the representation can be written as
\begin{equation*}
\chi=(\theta_{N!})^{m_0}\cdot(\theta_{N!}^{\ell})^{m_1}\cdots(\theta_{N!}^{\ell^{N!-1}})^{m_{N!-1}}
\end{equation*}
a product of fundamental characters of level $N!$ with exponents (Definition \ref{restrict}) $m_0,...,m_{N!-1}$ (depending on $\ell$) belonging to $[0,ei]$ where $e$ is the ramification index of $K_v/\Q_\ell$, $v=\bar{v}|_K$, and $K_v$ is the completion of $K$ with respect to $v$.
\end{theorem}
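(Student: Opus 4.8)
The plan is to combine the two results established so far in this subsection: Lemma \ref{221}, which controls the \emph{level} of the fundamental characters appearing in $\phi_{\bar v}^{\ss}$, and Theorem \ref{231} (Serre's tame inertia conjecture, proved by Caruso), which controls the \emph{exponents} of such characters. First I would reduce to the local situation: let $v=\bar v|_K$ and let $K_v$ be the completion of $K$ at $v$, a finite extension of $\Q_\ell$ with ramification index $e=e(K_v/\Q_\ell)$. The inertia subgroup $I_{\bar v}\subset\mathrm{Gal}_K$ is canonically the inertia subgroup of $\mathrm{Gal}_{K_v}$, so restricting $\phi_\ell$ to $I_{\bar v}$ only depends on the $\mathrm{Gal}_{K_v}$-representation on a mod $\ell$ reduction of $H^i_{\mathrm{\acute et}}(X_{\bar K},\Q_\ell)^\vee$, equivalently (by Proposition \ref{232}) on $H^i_{\mathrm{\acute et}}((X_{K_v})_{\overline{K_v}},\Z/\ell\Z)^\vee$. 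To apply Theorem \ref{231} one needs semistable reduction; by de Jong's alterations (or the semistable reduction theorem in the relevant generality) there is a finite extension $K_v'/K_v$ over which $X$ acquires semistable reduction. The characters of the tame inertia of $K_v'$ occurring in the Jordan--Hölder quotients are then, by Theorem \ref{231}, products of fundamental characters with exponents in $[0,e'i]$ where $e'=e(K_v'/\Q_\ell)$. The characters of $I_{\bar v}$ (tame inertia of $K_v$) occurring in $\phi_{\bar v}^{\ss}$ are obtained from these by restriction along $I_{K_v'}^{\mathrm t}\hookrightarrow I_{K_v}^{\mathrm t}$, and one must track how exponents transform under this restriction.

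The level issue is handled as follows. By Lemma \ref{221}, applied to each irreducible constituent of $\phi_{\bar v}$ (which has $\F_\ell$-dimension $n\le N$), every character of $\phi_{\bar v}^{\ss}$ is a product of fundamental characters of some level $n\le N$. Since $\theta_n=\theta_{N!}^{(\ell^{N!}-1)/(\ell^n-1)}$ — using $n\mid N!$ so that $\F_{\ell^n}\subset\F_{\ell^{N!}}$ and the fundamental character of level $n$ is a power of the one of level $N!$ — every such character can be rewritten as a product of the fundamental characters $\theta_{N!}^{\ell^j}$, $j=0,\dots,N!-1$, of the \emph{single} level $N!$. So after this rewriting each $\chi$ has the asserted shape; what remains is to bound the exponents $m_0,\dots,m_{N!-1}$ of the \emph{$\ell$-restricted} expression (Definition \ref{restrict}) by $ei$.

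For the exponent bound I would argue directly on the level-$N!$ description and avoid passing through a ramified extension if possible, since base change multiplies ramification. The cleanest route: the restriction map $I_{K_v'}^{\mathrm t}\to I_{K_v}^{\mathrm t}$ on tame quotients is, under the identification $I_{K_v}^{\mathrm t}\cong\varprojlim_k\F_{\ell^k}^*$, the map dual to the inclusion of residue fields composed with raising to the power $e(K_v'/K_v)$ (the tame character is $\sigma\mapsto \sigma(\pi^{1/m})/\pi^{1/m}$ and a uniformizer of $K_v$ has valuation $e(K_v'/K_v)$ in $K_v'$). Hence a fundamental character of level $d$ for $K_v$ pulls back to a fundamental character of level $d$ for $K_v'$ raised to the $e(K_v'/K_v)$-th power; conversely, a character of $I_{K_v}^{\mathrm t}$ whose pullback has $\ell$-restricted exponents in $[0,e'i]$ with $e'=e(K_v'/\Q_\ell)=e(K_v'/K_v)\cdot e$ has, after dividing the exponents by $e(K_v'/K_v)$ and reducing to the $\ell$-restricted form, exponents in $[0,ei]$. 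Working this out carefully — in particular checking that the division by $e(K_v'/K_v)$ is exact on the relevant exponents, which follows because the pullback character genuinely came from $I_{K_v}^{\mathrm t}$ and hence lies in the image of the $e(K_v'/K_v)$-power map — gives exponents in $[0,ei]$ independent of the auxiliary $K_v'$. I expect this bookkeeping with residue-field levels and ramification under base change, and the verification that the exponent-division is well-defined modulo the $\ell$-restriction normalization, to be the main obstacle; the level reduction to $N!$ via Lemma \ref{221} is routine, and the input from Caruso's theorem is a black box.
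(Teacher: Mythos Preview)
Your reduction to level $N!$ via $\theta_n=\theta_{N!}^{(\ell^{N!}-1)/(\ell^n-1)}$ is exactly the paper's norm-map step, written multiplicatively: the paper expresses $\chi$ as $(\mathrm{Nm}\circ\theta_{N!})^{m_0+m_1\ell+\cdots}$ with $\mathrm{Nm}:\F_{\ell^{N!}}^*\to\F_{\ell^d}^*$, which just redistributes the same exponents $m_j$ periodically over the $N!$ slots. So on that part the two arguments coincide.

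Where you diverge is the semi-stable reduction issue. The paper does \emph{not} carry out any base-change-and-descent: it simply invokes Caruso's theorem over $K_v$ and reads off the bound $[0,ei]$. This tacitly uses that $X$ has semi-stable reduction over $\mathscr{O}_{K_v}$, which the paper is entitled to assume because it has already declared (just before Theorem~\ref{redenv}) that one may freely replace $K$ by a finite extension; choosing once and for all an extension over which $X$ acquires semi-stable reduction makes the hypothesis of Theorem~\ref{231} hold at every $\ell$, and the new $e$ is still uniformly bounded.

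Your proposed descent, by contrast, does not work as stated. The claim that the $\ell$-restricted exponents of $\chi$ are obtained by dividing those of $\chi|_{I_{K_v'}^{\mathrm t}}$ by $e_0:=e(K_v'/K_v)$ fails because passing to the $\ell$-restricted form of $(\theta_d')^{e_0(m_0+m_1\ell+\cdots)}$ can involve reduction modulo $\ell^d-1$, which destroys the divisibility by $e_0$. Concretely, at level $d=1$ with a tame quadratic extension ($e_0=2$), the character $\theta_1^{m}$ with $m$ near $(\ell-1)/2$ restricts to $(\theta_1')^{2m-(\ell-1)}$, whose exponent is small even though $m$ is not; so a bound $[0,2i]$ downstairs does not force $m\in[0,i]$. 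Thus knowing only the exponent bound over $K_v'$ is insufficient to recover the sharp bound $ei$ over $K_v$ by pure group theory. The fix is not to descend at all: replace $K$ globally so that semi-stable reduction holds and apply Caruso directly, as the paper does. Since only a uniform bound on the exponents (independent of $\ell$) is needed downstream, nothing is lost.
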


\begin{proof} Proposition \ref{232} implies that if $\ell$ is sufficiently large, then Galois representations $V_\ell=(V_\ell^\vee)^\vee$ and $(H^i_{\mathrm{\acute{e}t}}(X_{\bar{K}},\mathbb{Z}/\ell\mathbb{Z})^\vee)^{\ss}$ are isomorphic.
 Let $\chi$ be a character of $I_{\bar{v}}^\mathrm{t}$ given by the semi-simplification of the restriction of $V_\ell$ 
to inertia subgroup $I_{\bar{v}}$.
By Theorem \ref{231},  $\chi$ can be written as
\begin{equation*}
\chi=(\theta_d)^{m_0}\cdot(\theta_d^{\ell})^{m_1}\cdots(\theta_d^{\ell^{d-1}})^{m_{d-1}},
\end{equation*}
a product of fundamental characters of level $d$ ($\leq N$ by Lemma \ref{221}) with exponents $m_0,...,m_{d-1}$ belonging to $[0,ei]$ where $e$ is the ramification index of $K_v/\Q_\ell$. Since $d$ divides $N!$, $\theta_{N!}$ factors through $\chi$. Consider the norm map $\mathrm{Nm}:\mathbb{F}_{\ell^{N!}}^*\to \mathbb{F}_{\ell^d}^*$
\[ \begin{array}{lcl}
x\mapsto x\cdot x^{\ell^d}\cdot x^{\ell^{2d}} \cdots x^{\ell^{(N!-d)}}.
\end{array}
\]
Then we obtain a product of fundamental characters of level $N!$
\begin{equation*}
\chi=(\mathrm{Nm} \circ\theta_{N!})^{{m_0}+m_1\ell+\cdots+ m_{d-1}\ell^{d-1}}
\end{equation*}
\begin{equation*}
=(\theta_{N!})^{s_0}\cdot(\theta_{N!}^{\ell})^{s_1}\cdots(\theta_{N!}^{\ell^{N!-1}})^{s_{N!-1}}
\end{equation*}
with exponents $s_0,...,s_{N!-1}$ belonging to $[0,ei]$.\end{proof}

\subsection{Tame inertia tori and rigidity}

Tame inertia tori were considered by Serre when he studied Galois action on $\ell$-torsion points of abelian varieties without complex multiplication \cite{SL2}. He observed that these tori have certain rigidity which will be explained in this subsection.

Assume $\ell> N-1$ as in $\mathsection2.1$. Since every non-trivial element of every $\ell$-Sylow subgroup of $\bar\Gamma_\ell$ is of order $\ell$ and $\bar{\Gamma}_\ell^+$ is contained in $\bar{\mathbf{S}}_\ell(\mathbb{F}_\ell)$ by Theorem \ref{N1}(i), index $[\bar{\Gamma}_\ell:\bar{\Gamma}_\ell\cap \bar{\mathbf{S}}_\ell(\mathbb{F}_\ell)]$ is prime to $\ell$.  Let $\bar{\mathbf{N}}_\ell$ be the normalizer of $\bar{\mathbf{S}}_\ell$ in $\mathrm{GL}_{N,\mathbb{F}_\ell}$; clearly $\bar{\Gamma}_\ell\subset \bar{\mathbf{N}}_\ell$.

\begin{theorem}\label{241}\cite[$\mathsection1$ Theorem]{SL2} There are constants $c_2=c_2(N)$ and $c_3=c_3(N)$ such that if $\ell>c_2$, $\bar{\mathbf{S}}_\ell\subset\mathrm{GL}_{N,\mathbb{F}_\ell}$ is an exponentially generated semisimple algebraic group defined over $\mathbb{F}_\ell$, and the action on $\bar{V}_\ell\cong\bar{\mathbb{F}}_\ell^N$ is semisimple. If $W_\ell$ is the $\mathbb{F}_\ell$-subspace of 
\begin{equation*}
U_\ell:=\bigoplus_{i=1}^{ c_3}(\otimes^i V_\ell)
\end{equation*} fixed by $\bar{\mathbf{S}}_\ell$, then $t_\ell:\bar{\mathbf{N}}_\ell/\bar{\mathbf{S}}_\ell\rightarrow \mathrm{GL}_{W_\ell}$ is an $\mathbb{F}_\ell$-embedding. Moreover, if $x\notin \bar{\mathbf{S}}_\ell$, then there is an element of $\bar{W}_\ell$ that is not fixed by $x$.
\end{theorem}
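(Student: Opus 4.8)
The plan is to deduce both assertions from standard representation-theoretic finiteness statements about semisimple groups of bounded rank inside $\GL_{N}$, together with a clean separation-of-points argument. First I would fix the ambient situation: since $\ell > N-1$ and $\bar{\mathbf{S}}_\ell$ acts semisimply on $\bar V_\ell \cong \bar{\F}_\ell^N$, the representation $V_\ell$ of $\bar{\mathbf{S}}_\ell$ decomposes into at most $N$ irreducible constituents, each of dimension $\le N$, and each is a tensor constituent of a highest weight whose coordinates (in a suitable basis of the character lattice) are bounded in terms of $N$ alone, provided $\ell$ exceeds a constant $c_2(N)$ (this is where the Jantzen-type bound / the hypotheses behind Nori's and Serre's constants enter). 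Consequently the \emph{list} of possible isomorphism types of the pair $(\bar{\mathbf{S}}_\ell \hookrightarrow \GL_{N,\bar\F_\ell})$, up to conjugacy, is finite and independent of $\ell$ for $\ell > c_2(N)$.

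The key step is the choice of $c_3 = c_3(N)$. For each of the finitely many conjugacy types of $\bar{\mathbf{S}}_\ell \subset \GL_{N}$, the normalizer quotient $\bar{\mathbf{N}}_\ell/\bar{\mathbf{S}}_\ell$ is a subgroup of the (finite, in the semisimple case) outer automorphism group together with the scalar/centralizer part; in any case, an element $x$ of the normalizer acts on the space of $\bar{\mathbf{S}}_\ell$-invariants in $\bigoplus_{i=1}^{c_3}(\otimes^i V_\ell)$, and I would choose $c_3$ large enough (depending only on $N$, by the finiteness just established) that this invariant space already \emph{generates} the coordinate ring information needed to pin down $x$ modulo $\bar{\mathbf{S}}_\ell$. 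Concretely: invariant vectors in tensor powers of $V_\ell$ correspond to morphisms of $\bar{\mathbf{S}}_\ell$-representations, i.e. to elements of $\Hom_{\bar{\mathbf{S}}_\ell}(\mathbf{1}, (V_\ell)^{\otimes i})$; as $i$ ranges up to a bound depending only on the constituent structure, these invariants, regarded inside $\End(V_\ell)^{\otimes j}$ after contracting, span the centralizer algebra and detect the permutation action of $\bar{\mathbf{N}}_\ell$ on the isotypic blocks. So $x \mapsto (x\text{ acting on } W_\ell)$ separates cosets, which is exactly injectivity of $t_\ell$; that it is an $\F_\ell$-embedding (a closed immersion of $\F_\ell$-groups) follows because both source and target are finite type over $\F_\ell$, the map is injective on points over $\bar\F_\ell$, and the differential is injective for the same invariant-theoretic reason (no nonzero tangent vector at the identity of $\bar{\mathbf{N}}_\ell/\bar{\mathbf{S}}_\ell$ can act trivially on all the $W_\ell$'s), using $\ell > c_2(N)$ to avoid small-characteristic pathologies.

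For the ``Moreover'' clause I would argue contrapositively within the same framework: if some $x \in \bar{\mathbf{N}}_\ell$ fixed every vector of $\bar W_\ell$ — i.e. acted trivially on all $\bar{\mathbf{S}}_\ell$-invariants in $U_\ell \otimes \bar\F_\ell$ — then by the spanning property of $c_3$ just invoked, $x$ would centralize $\bar{\mathbf{S}}_\ell$ and act trivially on each isotypic component of $V_\ell$, forcing $x \in \bar{\mathbf{S}}_\ell \cdot Z$ and in fact $x \in \bar{\mathbf{S}}_\ell$ once one notes (again via the bounded weight structure, $\ell > c_2(N)$) that the relevant scalar part is already seen inside the invariants; this contradicts $x \notin \bar{\mathbf{S}}_\ell$. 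The main obstacle I anticipate is making the choice of $c_3(N)$ genuinely uniform and effective: one must know that a bound on $N$ forces a bound on how high a tensor power is needed for $\bar{\mathbf{S}}_\ell$-invariants to generate the centralizer and separate normalizer cosets, uniformly over all $\ell > c_2(N)$ and all the finitely many types — this is precisely the content of Serre's argument in \cite{SL2} and rests on the finiteness of representation types in bounded rank and dimension, so I would cite that input rather than re-derive the combinatorics of tensor invariants from scratch.
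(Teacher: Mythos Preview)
The paper does not prove Theorem~\ref{241}: it is stated with attribution to \cite[$\mathsection1$ Theorem]{SL2} (Serre's letter to Vign\'eras) and then used as a black box throughout $\mathsection2.4$--$\mathsection2.5$. There is no proof in the paper to compare your proposal against.

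On your sketch taken on its own merits: the overall strategy---only finitely many conjugacy types of semisimple $\bar{\mathbf{S}}_\ell \hookrightarrow \GL_N$ occur for $\ell$ large, and for each type a bounded tensor power suffices for the invariants to cut out $\bar{\mathbf{S}}_\ell$---is the correct one and is essentially Serre's. There is, however, one genuine gap. In the ``Moreover'' clause you restrict at the outset to $x \in \bar{\mathbf{N}}_\ell$, but the theorem asserts the conclusion for arbitrary $x \in \GL_N(\bar\F_\ell)$, and that generality is actually needed in the paper: Lemma~\ref{251} applies the clause to an element of the form $xsx^{-1}$ with $s\in\bar{\mathbf{S}}_\ell$ and $x$ \emph{not} assumed to normalize $\bar{\mathbf{S}}_\ell$. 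What must be shown is the stronger statement that the pointwise stabilizer of $\bar W_\ell$ inside all of $\GL_N$ is exactly $\bar{\mathbf{S}}_\ell$; the injectivity of $t_\ell$ on $\bar{\mathbf{N}}_\ell/\bar{\mathbf{S}}_\ell$ is then a consequence, not the primary assertion. Your invariant-theoretic framework does yield this stronger form once reformulated: the claim is that enough $\bar{\mathbf{S}}_\ell$-invariant tensors exist in degrees $\le c_3$ that their common pointwise stabilizer in $\GL_N$ is already $\bar{\mathbf{S}}_\ell$---a Chevalley-type characterization of a reductive subgroup by its tensor invariants, made uniform in $\ell$ via the finiteness of types you invoke.
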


By Theorem \ref{241}, $\bar{\Gamma}_\ell/(\bar{\Gamma}_\ell\cap \bar{\mathbf{S}}_\ell(\mathbb{F}_\ell))$ embeds in $\mathrm{GL}(W_\ell)$ with dim$(W_\ell)\leq c_4=c_4(N)$ uniformly for some integer $c_4$. Theorem \ref{242} below is the main result of this subsection. 

\begin{definition}\label{mu} Define $\mu_\ell:\mathrm{Gal}_K\rightarrow \GL(W_\ell)$ to be the composition $t_\ell\circ\phi_\ell$ for each $\ell$ and $\bar{\Omega}_\ell$ to be the image $\mu_\ell$, where $t_\ell$ is defined in Theorem \ref{241}.
\end{definition}

\begin{theorem}\label{242} Let $\bar{\mathbf{I}}_\ell$ be the algebraic group generated by a set of tame inertia tori $\bar{\mathbf{I}}_{\bar{v}}$ (Definition \ref{tori}) for $\ell\gg1$. There exist constant $c_8=c_8(N)$ and a finite normal field extension $L/K$ such that if $\ell\gg1$, then $\bar{\mathbf{I}}_\ell$ is a torus, called the inertia torus at $\ell$, and $\mu_\ell(\mathrm{Gal}_L)\subset \bar{\Omega}_\ell$ is a subgroup of $\bar{\mathbf{I}}_\ell(\mathbb{F}_\ell)$ such that
\begin{enumerate}
\item[(i)] $\{\bar{\mathbf{I}}_\ell\hookrightarrow \GL_{W_\ell}\}_{\ell\gg1}$ have bounded formal characters (Definition \ref{bfc}'),
\item[(ii)] $[\bar{\mathbf{I}}_\ell(\mathbb{F}_\ell):\mu_\ell(\mathrm{Gal}_L)]$ is bounded by $c_8$.\\
\end{enumerate} \end{theorem}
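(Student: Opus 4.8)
The plan is to produce, for each place $\bar v\mid\ell$ of $\bar K$, a tame inertia torus $\bar{\mathbf{I}}_{\bar v}\subset\GL_{W_\ell}$ from the fundamental-character description of $\mu_\ell|_{I_{\bar v}^{\mathrm t}}$, to make it uniform in $\ell$ using Theorem~\ref{233}, and then to show that the group $\bar{\mathbf{I}}_\ell$ generated by a suitable finite collection of these tori becomes a torus after a finite normal base change, which also lands $\mu_\ell(\mathrm{Gal}_L)$ inside it with bounded index.

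\textbf{Construction and uniform control of the $\bar{\mathbf{I}}_{\bar v}$.} Since $W_\ell$ is a $\bar{\mathbf{N}}_\ell$-stable $\F_\ell$-subspace of $U_\ell=\bigoplus_{i=1}^{c_3}(\otimes^i V_\ell)$, each character of $\mu_\ell|_{I_{\bar v}^{\mathrm t}}$ on $\bar W_\ell$ is a sum of at most $c_3$ characters of $\phi_\ell|_{I_{\bar v}^{\mathrm t}}$ on $\bar V_\ell$; by Theorem~\ref{233} the latter are products of level-$N!$ fundamental characters with exponents in $[0,ei]$, so, after passing to $\ell$-restricted form, every character of $\mu_\ell|_{I_{\bar v}^{\mathrm t}}$ is a product of level-$N!$ fundamental characters with exponents bounded by a constant $M$ independent of $\ell$ and $\bar v$. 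Via the canonical surjection $I_{\bar v}^{\mathrm t}\twoheadrightarrow\F_{\ell^{N!}}^{*}=\mathbf{T}_{N!}(\F_\ell)$, where $\mathbf{T}_{N!}:=\Res_{\F_{\ell^{N!}}/\F_\ell}\mathbb{G}_{m}$, this exhibits $\mu_\ell|_{I_{\bar v}^{\mathrm t}}$ as the composite of that surjection with an $\F_\ell$-morphism $f_{\bar v}\colon\mathbf{T}_{N!}\to\GL_{W_\ell}$, and the tame inertia torus $\bar{\mathbf{I}}_{\bar v}$ of Definition~\ref{tori} is the $\F_\ell$-torus $f_{\bar v}(\mathbf{T}_{N!})$. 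The character lattice of $\ker f_{\bar v}$ is cut out by the bounded weight data, so $|\pi_0(\ker f_{\bar v})|$ is bounded in terms of $N$ and $M$ only; Lang's theorem for the connected quotient of $\mathbf{T}_{N!}$ then forces $\mu_\ell(I_{\bar v}^{\mathrm t})\supseteq f_{\bar v}(\mathbf{T}_{N!}(\F_\ell))$ to have index in $\bar{\mathbf{I}}_{\bar v}(\F_\ell)$ bounded uniformly in $\ell$ and $\bar v$. Finally, only finitely many multisets of at most $c_4$ vectors in $\Z^{N!}$ have all coordinates of absolute value $\le M$, so by Remark~\ref{finitefc} the family $\{\bar{\mathbf{I}}_{\bar v}\hookrightarrow\GL_{W_\ell}\}$ realizes finitely many formal characters; in particular it has bounded formal characters, which will yield~(i).

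\textbf{$\bar{\mathbf{I}}_\ell$ is a torus; the extension $L$.} The places $\bar v\mid\ell$ fall into at most $[K:\Q]$ orbits under $\mathrm{Gal}_K$, and within an orbit $\bar{\mathbf{I}}_{g\bar v}=\mu_\ell(g)\,\bar{\mathbf{I}}_{\bar v}\,\mu_\ell(g)^{-1}$; hence $\bar{\mathbf{I}}_\ell$ is normalized by $\bar\Omega_\ell=\mu_\ell(\mathrm{Gal}_K)$ and is generated by the $\bar\Omega_\ell$-conjugates of boundedly many of the $\bar{\mathbf{I}}_{\bar v}$. Since $\bar\Omega_\ell\cong\bar\Gamma_\ell/(\bar\Gamma_\ell\cap\bar{\mathbf{S}}_\ell(\F_\ell))$ is a finite subgroup of $\GL_{c_4}(\F_\ell)$ of order prime to $\ell$, the Jordan--Brauer--Feit bound gives an abelian normal subgroup of index bounded by a function of $c_4$, in which the subgroups $\mu_\ell(I_{\bar v}^{\mathrm t})$ — each of bounded index in $\bar{\mathbf{I}}_{\bar v}(\F_\ell)$ — sit up to bounded index and pairwise commute. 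This is exactly the setup for Serre's rigidity of tame inertia tori~\cite{SL2} (together with the embedding $t_\ell$ of Theorem~\ref{241} and the finiteness from the previous paragraph): after a finite normal base change of $K$, which the rigidity allows to be taken independent of $\ell$, all the relevant conjugate tori lie in one common torus, so $\bar{\mathbf{I}}_\ell$, being generated by pairwise commuting tori, is a torus; its embedding into $\GL_{W_\ell}$ then has bounded formal character by Proposition~\ref{p204}, proving~(i). For~(ii) I would enlarge $K$ further by a fixed finite normal extension over which $\{\phi_\ell\}$ is unramified outside $\ell$, i.e.\ $S=\varnothing$ (available by Grothendieck's $\ell$-adic monodromy theorem and $\ell$-independence of the tame local monodromy of \'etale cohomology), and take $L$ to be the Hilbert class field of the resulting $K$. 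Writing $\bar T_\ell:=\langle\,\mu_\ell(I_{\bar v}^{\mathrm t}):\bar v\mid\ell\,\rangle$, which is normal in $\bar\Omega_\ell$ and contained in $\bar{\mathbf{I}}_\ell(\F_\ell)$, the quotient $\bar\Omega_\ell/\bar T_\ell$ is a quotient of the ideal class group, so $\mu_\ell(\mathrm{Gal}_L)\subseteq\bar T_\ell$; combining $[\bar T_\ell:\mu_\ell(\mathrm{Gal}_L)]\le[L:K]$ with the bound for $[\bar{\mathbf{I}}_\ell(\F_\ell):\bar T_\ell]$ coming from Lang's theorem applied to the bounded-degree isogeny $\prod_j\bar{\mathbf{I}}_{\bar v_j}\to\bar{\mathbf{I}}_\ell$ (legitimate now that these tori commute) gives~(ii) with $c_8=c_8(N)$.

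\textbf{Main obstacle.} The crux is the second step: proving that $\bar{\mathbf{I}}_\ell$ genuinely becomes a torus and that the finite normal extension of $K$ needed for this is independent of $\ell$. This is precisely what Serre's rigidity of tame inertia tori supplies, and everything else — the reduction to $S=\varnothing$, the class-field argument for $L$, and the index and formal-character estimates — is then routine, given that rigidity, the embedding $t_\ell$ of Theorem~\ref{241}, and the bounded exponents of Theorem~\ref{233}.
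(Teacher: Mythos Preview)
Your outline is the paper's argument: Jordan for a bounded-index abelian normal subgroup of $\bar\Omega_\ell$, Serre's rigidity (Lemma~\ref{245}) to make the tame inertia tori pairwise commute, then Hermite plus potential semi-stability plus class field theory to produce an $\ell$-independent $L$, and point-counting on tori for the index bound. Two slips to fix. First, you cannot arrange $S=\varnothing$ for $\phi_\ell$ by a finite base change: potential semi-stability only makes $\phi_\ell(I_{\bar v})$ \emph{unipotent} for $\bar v\nmid\ell$, not trivial; what you actually need (and what the paper uses) is that such unipotent elements lie in $\bar{\mathbf S}_\ell(\F_\ell)$ and hence die under $t_\ell$, so it is $\mu_\ell$, not $\phi_\ell$, that becomes unramified outside $\ell$ after the base change. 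Second, the multiplication map $\prod_j\bar{\mathbf I}_{\bar v_j}\to\bar{\mathbf I}_\ell$ is a surjection of tori but not an isogeny (its kernel may have positive dimension); what bounds the cokernel on $\F_\ell$-points is that $|\pi_0(\ker)|$ is bounded, which follows from the bounded exponents, and then Lang (or the paper's use of \cite[Lemma~3.5]{Nori} applied to the map from $(\bar{\mathbf E}_\ell)^k$) gives the index bound. Finally, take $L$ to contain the Galois closure over the original $K$ of your Hilbert class field, so that $L/K$ is normal as required.
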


\begin{theorem}\label{Jordan}\cite[Jordan's theorem on finite linear groups]{Jordan} For every $n$ there exists a constant $J(n)$ such that any finite subgroup
of $\mathrm{GL}_n$ over a field of characteristic zero possesses an abelian normal subgroup
of index $\leq J(n)$.\end{theorem}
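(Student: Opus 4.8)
Being the classical theorem of Jordan, this statement is quoted above rather than reproved; still, the plan one would follow is the standard reduction to a packing argument inside a compact unitary group. First I would pass to the complex field: a finite subgroup of $\GL_n(k)$ with $\mathrm{char}\,k=0$ is defined over a number field and hence embeds into $\GL_n(\C)$, and averaging a positive definite Hermitian form conjugates it into the compact group $\U(n)$. So one may assume $G\subseteq\U(n)$ and work with the operator norm $\|\cdot\|$, which is invariant under conjugation by unitary matrices.

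Next I would fix a small constant $\epsilon$ (for instance $\epsilon=\tfrac14$), put $S=\{g\in G:\|g-1\|<\epsilon\}$, and let $A=\langle S\rangle$. Because $S$ is stable under $G$-conjugation, $A\trianglelefteq G$; and covering the compact group $\U(n)$ by $M=M(n,\epsilon)$ balls of radius $\epsilon/2$ shows $[G:A]\le M$, since any two elements of $G$ lying in one such ball differ by an element of $S\subseteq A$. It then remains to produce inside $A$ an abelian subgroup that is normal in $G$ of index bounded in terms of $n$; I would argue by induction on $n$ after the usual reduction to the primitive case (a reducible $G$ embeds in a product of smaller general linear groups, an imprimitive $G$ maps onto a symmetric group with kernel inside such a product, and in either case induction together with intersecting the $G$-conjugates of the abelian normal subgroups so obtained finishes). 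The analytic input is the commutator contraction: writing $[g,h]=ghg^{-1}h^{-1}$ one has $\|[g,h]-1\|=\|gh-hg\|=\|(g-1)(h-1)-(h-1)(g-1)\|\le 2\|g-1\|\,\|h-1\|$ in $\U(n)$, so for $\epsilon\le\tfrac14$ the commutator of $g$ with any $h\in S$ again lies in $S$ and is strictly closer to $1$ than $g$. Choosing $g_0\in S\setminus\{1\}$ of minimal distance to $1$ therefore forces $[g_0,h]=1$ for every $h\in S$, so $g_0\in Z(A)$; if $g_0$ is non-scalar its eigenspace decomposition is $A$-stable with all summands of dimension $<n$, the image of $A$ in each summand is generated by elements within $\epsilon$ of $1$, and induction makes $A$ abelian, whereas if every nontrivial element of $S$ is scalar then $A$ is scalar, hence abelian.

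The hard part is exactly this structural analysis of $A$: one must make the ``scalar versus eigenspace-splitting'' alternative genuinely exhaustive, and in particular dispose of the case where the small element closest to $1$ is scalar while $A$ is not abelian — this is where the reduction to primitive $G$ is really used (via Clifford's theorem, a normal abelian subgroup of a primitive linear group acts by scalars), and it is the delicate bookkeeping point; by contrast the commutator estimate and the covering of $\U(n)$ are elementary once one commits to the unitary group. I would also note that this route is far from optimal: the sharp bound, of the shape $(n+1)!$ for large $n$, is due to Collins and rests on the classification of finite simple groups, and that there is a softer variant (Jordan--Zassenhaus) producing a fixed neighborhood $V$ of $1$ in $\U(n)$ such that any subgroup generated by elements of $V$ is nilpotent, from which Jordan's theorem also follows after reducing ``nilpotent'' to ``abelian''.
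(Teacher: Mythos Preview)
The paper does not prove this theorem at all: it is stated with a citation to Jordan's original 1878 paper and then immediately applied, so there is no ``paper's own proof'' to compare against. You correctly recognized this at the outset.

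Your sketch follows the standard modern route (unitarization, a packing/covering bound on $[G:A]$, and the commutator contraction $\|[g,h]-1\|\le 2\|g-1\|\,\|h-1\|$), and you rightly flag that the delicate point is showing $A=\langle S\rangle$ is abelian. Two small comments. First, the phrase ``defined over a number field'' is not literally what one uses: the matrix entries of a finite subgroup generate a finitely generated extension of $\Q$, which need not be algebraic, but any such field embeds in $\C$; alternatively one invokes that characters take cyclotomic values and determine the representation over an algebraically closed field of characteristic zero. Either way the conclusion $G\hookrightarrow\GL_n(\C)$ stands. Second, your dichotomy ``$g_0$ non-scalar'' versus ``every nontrivial element of $S$ is scalar'' is not exhaustive as written: the minimal $g_0\in S\setminus\{1\}$ could be scalar while $S$ still contains non-scalar elements, and then taking instead the minimal \emph{non-scalar} $g_1\in S$ only yields that each $[g_1,s]$ is scalar, not that it is trivial. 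You already identify this as the hard bookkeeping point; one clean way to close it is to prove by induction on $n$ the auxiliary statement that any finite subgroup of $\U(n)$ generated by $\{g:\|g-1\|<\epsilon_n\}$ is abelian, choosing $\epsilon_n$ small enough that the eigenspace-splitting step feeds into the inductive hypothesis for each proper summand, and handling the scalar case by noting that if every element of $A$ within $\epsilon_n$ of $1$ is scalar then so is every generator, hence $A$ itself. The primitive reduction you mention is then not strictly necessary for this route, though it gives an alternative endgame.
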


The order of $\bar{\Omega}_\ell$ is prime to $\ell$. $\bar{\Omega}_\ell$ can thus be lifted to a subgroup of GL$_{N'}(\mathbb{C})$ such that $N'$ only depends on $N$. Theorem \ref{Jordan} (Jordan) then says that $\bar{\Omega}_\ell$ has a abelian normal subgroup $\bar{J}_\ell$ of index less than a constant $c_5=c_5(N):=J(N')$ depends on $N'$. Since $N'$ depends on $N$, we have $[\bar{\Omega}_\ell:\bar{J}_\ell]\leq c_5$. If $\bar{v}$ divides $\ell$, then the action of the inertia group $I_{\bar{v}}$ on $W_\ell$ is semisimple because $|\bar{\Omega}_\ell|$ is prime to $\ell$. Since dim$(W_\ell)|c_4!$ We obtain
\begin{equation*}
\mu_\ell:I_{\bar{v}}^\mathrm{t}\stackrel{\theta_{c_4!}}{\twoheadrightarrow}\mathbb{F}_{\ell^{c_4!}}^*\rightarrow \mathrm{GL}(W_\ell).
\end{equation*}
By Theorem \ref{233} and $W_\ell$ in Theorem \ref{241}, there exist $c_6=c_6(N)\geq0$ such that if $\chi$ is a character, then $\chi$ can be written as a product of fundamental characters of level $c_4!$ 
\begin{equation*}
\chi=(\theta_{c_4!})^{m_0}\cdot(\theta_{c_4!}^{\ell})^{m_1}\cdots(\theta_{c_4!}^{\ell^{c_4!-1}})^{m_{c_4!-1}}
\end{equation*}
with exponents $m_0,...,m_{c_4!-1}$ belonging to $[0,c_6]$ for all $\ell\gg1$. Therefore, we make the following definition.

\begin{definition}\label{tori} Denote field $\mathbb{F}_{\ell^{c_4!}}$ by $\mathbb{E}_\ell$ for all $\ell$. This gives a homomorphism
\begin{equation*}
f_{\bar{v}}:\mathbb{E}_\ell^*\rightarrow \mathrm{GL}(W_\ell)
\end{equation*}
 if $\ell>c_6(N)+1$. Let $\bar{\mathbf{E}}_\ell$ denote $\mathrm{Res}_{\mathbb{E}_\ell/\mathbb{F}_\ell}(\mathbb{G}_m)$ (Weil restriction of scalars) for all $\ell$. We have $\bar{\mathbf{E}}_\ell(\F_\ell)=\mathbb{E}_\ell^*$. Then $f_{\bar{v}}$ extends uniquely \cite[$\mathsection3$]{Hall} to an $\ell$-restricted $\mathbb{F}_\ell$-morphism below:
\begin{equation*}
w_{\bar{v}}:\bar{\mathbf{E}}_\ell:=\mathrm{Res}_{\mathbb{E}_\ell/\mathbb{F}_\ell}(\mathbb{G}_m)\rightarrow \mathrm{GL}_{W_\ell}.
\end{equation*}
 Denote the image of $w_{\bar{v}}$ by $\bar{\mathbf{I}}_{\bar{v}}$ for $\bar{v}|\ell\gg1$. It is called the \textit{tame inertia torus at $\bar{v}\in\Sigma_{\bar{K}}$}.
\end{definition}

\begin{lemma}\label{244} There exists a constant $c_7=c_7(N)$ such that for any $\bar{v}|\ell>c_6(N)+1$, we have
\begin{enumerate}
\item[(i)] $\{\bar{\mathbf{I}}_{\bar{v}}\hookrightarrow \GL_{W_\ell}\}_{\bar{v}}$ have bounded formal characters (Definition \ref{bfc}');
\item[(ii)] $[\bar{\mathbf{I}}_{\bar{v}}(\mathbb{F}_\ell):f_{\bar{v}}(\mathbb{E}_\ell^*)]\leq c_7$.
\end{enumerate}
\end{lemma}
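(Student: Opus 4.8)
The plan is to reduce both assertions to the single fact that the weights of $w_{\bar v}$ lie in the \emph{fixed} lattice $\X(\bar{\mathbf E}_\ell)\cong\Z^{c_4!}$ and form a multiset of bounded vectors, so that only finitely many configurations can occur. Over $\bar{\mathbb F}_\ell$ the torus $\bar{\mathbf E}_\ell=\Res_{\mathbb E_\ell/\mathbb F_\ell}(\mathbb G_m)$ becomes $\mathbb G_m^{c_4!}$, and $\X(\bar{\mathbf E}_\ell)\cong\Z^{c_4!}$ carries a $\Z$-basis whose restriction to $\mathbb F_\ell$-points is the set of fundamental characters of level $c_4!$. By the construction of $w_{\bar v}$ (Definition \ref{tori}) and the discussion preceding it, every weight of $W_\ell$ for $w_{\bar v}$ is a product of these fundamental characters with exponents in $[0,c_6]$; hence, in the above basis, the weights of $w_{\bar v}$ on $W_\ell$ form a multiset $M_{\bar v}$ of $\dim W_\ell\le c_4$ vectors in $\Z^{c_4!}$ with all coordinates in $[0,c_6]$, and there are only finitely many such multisets, uniformly in $\bar v$ and $\ell$. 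Since the action of $\bar{\mathbf E}_\ell$ on $W_\ell$ factors through the image torus $\bar{\mathbf I}_{\bar v}=w_{\bar v}(\bar{\mathbf E}_\ell)$ and $\bar{\mathbf I}_{\bar v}\hookrightarrow\GL_{W_\ell}$ is faithful, I would first record that $\X(\bar{\mathbf I}_{\bar v})$ is precisely the sublattice $\Lambda_{\bar v}:=\langle M_{\bar v}\rangle\subseteq\Z^{c_4!}$, with the weights of $\bar{\mathbf I}_{\bar v}$ on $W_\ell$ being the elements of $M_{\bar v}$ read inside $\Lambda_{\bar v}$.

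For (i): the formal character of $\bar{\mathbf I}_{\bar v}\hookrightarrow\GL_{W_\ell}$ is determined by the pair $(\Lambda_{\bar v},M_{\bar v})$, of which there are finitely many; hence only finitely many formal characters occur, and such a family is bounded (immediate from Definition \ref{bfc}; cf.\ Remark \ref{finitefc}). If an explicit constant is wanted instead, diagonalize $\bar{\mathbf I}_{\bar v}$ inside $\mathbb G_m^{\dim W_\ell}$ by a weight-vector basis of $W_\ell$: it is then cut out by the characters whose exponent vectors span the saturated kernel of the integer matrix with columns $M_{\bar v}$, and a Cramer's-rule estimate produces a $\Z$-basis of that kernel with entries bounded by a constant depending only on $N$, so that Proposition \ref{p203}(ii) applies.

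For (ii): let $\bar{\mathbf K}_{\bar v}$ denote the reduced subgroup underlying the kernel of $w_{\bar v}\colon\bar{\mathbf E}_\ell\to\bar{\mathbf I}_{\bar v}$; it is a smooth group of multiplicative type whose character group is $\Z^{c_4!}/\Lambda_{\bar v}$ modulo $\ell$-torsion and whose identity component is a torus, and the quotient map $\bar{\mathbf E}_\ell\to\bar{\mathbf E}_\ell/\bar{\mathbf K}_{\bar v}$ is identified, through an infinitesimal isogeny $\bar{\mathbf E}_\ell/\bar{\mathbf K}_{\bar v}\to\bar{\mathbf I}_{\bar v}$, with $w_{\bar v}$ on $\mathbb F_\ell$- and $\bar{\mathbb F}_\ell$-points. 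Since $\bar{\mathbf E}_\ell$ is connected, Lang's theorem gives $H^1(\mathbb F_\ell,\bar{\mathbf E}_\ell)=1$, so the long exact sequence of $1\to\bar{\mathbf K}_{\bar v}\to\bar{\mathbf E}_\ell\to\bar{\mathbf E}_\ell/\bar{\mathbf K}_{\bar v}\to1$ yields $[\bar{\mathbf I}_{\bar v}(\mathbb F_\ell):f_{\bar v}(\mathbb E_\ell^*)]=|H^1(\mathbb F_\ell,\bar{\mathbf K}_{\bar v})|$, using that $f_{\bar v}(\mathbb E_\ell^*)=w_{\bar v}(\bar{\mathbf E}_\ell(\mathbb F_\ell))$ is the image of $\bar{\mathbf E}_\ell(\mathbb F_\ell)$. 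Applying Lang again to the torus $\bar{\mathbf K}_{\bar v}^\circ$ and using $|H^1(\mathrm{Gal}_{\mathbb F_\ell},A)|\le|A|$ for finite $A$, this is at most $|\pi_0(\bar{\mathbf K}_{\bar v})|$, which divides the order $[\Lambda_{\bar v}^{\mathrm{sat}}:\Lambda_{\bar v}]$ of the torsion subgroup of $\Z^{c_4!}/\Lambda_{\bar v}$; this index depends only on $M_{\bar v}$, hence takes finitely many values as $\bar v$ and $\ell$ vary. Taking $c_7=c_7(N)$ to be the maximum of the finitely many constants appearing in (i) and (ii) finishes the argument.

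The only step I expect to require real care is the passage to characteristic-$\ell$ group schemes in (ii): the scheme-theoretic kernel of $w_{\bar v}$ may be non-reduced, so one must work with its reduced subgroup and keep track of the resulting infinitesimal isogeny (equivalently, observe that the $\mathbb F_\ell$-points and the first cohomology of a group of multiplicative type over $\mathbb F_\ell$ depend only on its character group modulo $\ell$-torsion). Everything else is the elementary finiteness of the set of possible weight multisets $M_{\bar v}$, which simultaneously controls the formal character in (i) and the lattice index $[\Lambda_{\bar v}^{\mathrm{sat}}:\Lambda_{\bar v}]$ in (ii); note moreover that once $\ell>[\Lambda_{\bar v}^{\mathrm{sat}}:\Lambda_{\bar v}]$ the kernel is automatically reduced and this subtlety disappears.
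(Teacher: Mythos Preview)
Your proof is correct. Part (i) follows the same route as the paper: the weights of $w_{\bar v}$ have bounded exponents in the fixed basis of $\X(\bar{\mathbf E}_\ell)\cong\Z^{c_4!}$, so only finitely many formal characters can arise, and one realizes them via Proposition \ref{p203} by diagonalizing $\bar{\mathbf I}_{\bar v}$ inside $\mathbb G_m^{\dim W_\ell}$.

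For (ii), however, you take a genuinely different route. The paper argues by direct point counting: it first observes (as you do) that the number of connected components of $\ker(w_{\bar v})$ is bounded by some $c_7$ depending only on $N$, and then invokes Nori's estimate $(\ell-1)^k\le|\mathbf T(\mathbb F_\ell)|\le(\ell+1)^k$ for an $\mathbb F_\ell$-torus of dimension $k$ to bound $|f_{\bar v}(\mathbb E_\ell^*)|$ from below and $|\bar{\mathbf I}_{\bar v}(\mathbb F_\ell)|$ from above, obtaining
\[
[\bar{\mathbf I}_{\bar v}(\mathbb F_\ell):f_{\bar v}(\mathbb E_\ell^*)]\le \frac{c_7(\ell+1)^{c_4!}}{\ell^{c_4!}-1}\longrightarrow c_7.
\]
Your argument replaces this counting by the cohomological exact sequence coming from $1\to\bar{\mathbf K}_{\bar v}\to\bar{\mathbf E}_\ell\to\bar{\mathbf E}_\ell/\bar{\mathbf K}_{\bar v}\to1$ together with Lang's theorem, which yields the exact identity $[\bar{\mathbf I}_{\bar v}(\mathbb F_\ell):f_{\bar v}(\mathbb E_\ell^*)]=|H^1(\mathbb F_\ell,\bar{\mathbf K}_{\bar v})|\le|\pi_0(\bar{\mathbf K}_{\bar v})|\le[\Lambda_{\bar v}^{\mathrm{sat}}:\Lambda_{\bar v}]$. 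This is more conceptual and gives a sharp bound valid for every $\ell$ without the asymptotic passage; the price is the extra bookkeeping with the possibly non-reduced kernel, which you handle correctly (and which, as you note, is automatically reduced once $\ell$ exceeds the finitely many lattice indices involved). Both arguments ultimately hinge on the same finiteness---bounding $|\pi_0(\ker w_{\bar v})|$ via the finitely many possible weight multisets---so the two proofs converge at the key step and diverge only in how the passage from ``bounded $\pi_0$'' to ``bounded index of $\mathbb F_\ell$-points'' is executed.
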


\begin{proof} Since dim$(W_\ell)$ and dim$(\bar{\mathbf{E}}_\ell)$ are bounded by a constant independent of $\ell$ and the exponents of the characters of $w_{\bar{v}}$ in terms of the fundamental characters \cite[$\mathsection3$]{Hall} belong to $[0,c_6]$, we find by 
Proposition \ref{p203} a set of characters $R_{\bar{v}}$ of uniformly bounded exponents of the diagonal subgroup of $\GL_{W_\ell}$ by diagonalizing $\bar{\mathbf{I}}_{\bar{v}}$ and then obtain assertion (i). For assertion (ii), uniform boundedness of exponents of characters and  $\mathrm{dim}(\bar{\mathbf{E}}_\ell)=c_4!$ (for all $\ell$) imply the number of connected components of $\mathrm{Ker}(w_{\bar{v}})$ is uniformly bounded by $c_7$. On the other hand, the number of $\mathbb{F}_\ell$-rational points of any $\mathbb{F}_\ell$-torus of dimension $k$ is between $(\ell-1)^k$ and $(\ell+1)^k$ by \cite[Lemma 3.5]{Nori}. Therefore, $\mu_\ell(I_{\bar{v}}^\mathrm{t})=f_{\bar{v}}(\mathbb{E}_\ell^*)$ has at least
\begin{equation*}
\frac{|\mathbb{E}_\ell^*|}{c_7(\ell+1)^{\mathrm{dim}(\mathrm{Ker}(w_{\bar{v}}))}}=\frac{\ell^{c_4!}-1}{c_7(\ell+1)^{\mathrm{dim}(\mathrm{Ker}(w_{\bar{v}}))}}
\end{equation*}
points and $[\bar{\mathbf{I}}_{\bar{v}}(\mathbb{F}_\ell):\mu_\ell(I_{\bar{v}}^\mathrm{t})]$ is bounded by 
\begin{equation*}
\frac{c_7(\ell+1)^{\mathrm{dim}(\mathrm{Ker}(w_{\bar{v}}))+\mathrm{dim}(\mathrm{Im}(w_{\bar{v}}))}}{\ell^{c_4!}-1}=\frac{c_7(\ell+1)^{c_4!}}{\ell^{c_4!}-1}\rightarrow c_7
\end{equation*}
when $\ell$ is big. This proves (ii).\end{proof}

\begin{lemma}\label{245}(Rigidity) \cite[$\mathsection3$]{Hall},\cite[$\mathsection3$]{SL2} Let $s\in\mathrm{GL}(W_\ell)$ be a semisimple element and $f_{\bar{v}}:\mathbb{E}_\ell^*\rightarrow \mathrm{GL}(W_\ell)$ a representation such that the exponents of characters of $f_{\bar{v}}$ belong to $[0,c]$ for some $c>0$. If $H\subset \mathbb{E}_\ell^*$ is a subgroup such that $f_{\bar{v}}(H)$ commutes with $s$ in $\mathrm{GL}(W_\ell)$ and $c\cdot [\mathbb{E}_\ell^*:H]\leq \ell-1$, then $\bar{\mathbf{I}}_{\bar{v}}$ commutes with $s$, and hence so does $f_{\bar{v}}(\mathbb{E}_\ell^*)$.\\
\end{lemma}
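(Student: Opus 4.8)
\textbf{Plan of proof for Lemma \ref{245} (Rigidity).} The plan is to exploit the extremely constrained structure of $\bar{\mathbf{E}}_\ell=\mathrm{Res}_{\mathbb{E}_\ell/\mathbb{F}_\ell}(\mathbb{G}_m)$ and of the map $f_{\bar{v}}$ on the level of its fundamental characters: the hypothesis that the exponents lie in $[0,c]$ forces $\bar{\mathbf{I}}_{\bar{v}}$ to be cut out of a diagonalized ambient torus by characters of uniformly bounded exponents, and this bound is small compared with $\ell-1$. First I would diagonalize: after base change to $\bar{\mathbb{F}}_\ell$, write $\bar{\mathbf{E}}_{\ell}\otimes\bar{\mathbb{F}}_\ell\cong\mathbb{G}_m^{c_4!}$ with coordinates indexed by the embeddings $\mathbb{E}_\ell\hookrightarrow\bar{\mathbb{F}}_\ell$ (equivalently the fundamental characters $\theta_{c_4!}^{\ell^j}$), and decompose $W_\ell\otimes\bar{\mathbb{F}}_\ell$ into weight spaces for $\bar{\mathbf{E}}_\ell$; each weight is $\sum_{j} m_j\theta_{c_4!}^{\ell^j}$ with $0\le m_j\le c$. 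The semisimple element $s$, being semisimple and commuting with the abelian connected group $f_{\bar{v}}(\mathbb{E}_\ell^*)$ of order coprime to $\ell$, can be put in the same diagonal basis, so $s$ acts as a scalar $\lambda_\chi$ on each weight space $W_\chi$ of the character $\chi$ of $f_{\bar{v}}$.

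The heart of the matter is the following dichotomy. For a character $\chi$ of $f_{\bar{v}}$, either $s$ acts on $W_\chi$ by the same scalar as every element of $\bar{\mathbf{I}}_{\bar v}$ that could possibly distinguish it — in which case there is nothing to check — or $s$ fails to commute with $\bar{\mathbf{I}}_{\bar v}$ precisely because $\chi$ separates two weight spaces of $\bar{\mathbf{I}}_{\bar v}$ on which $s$ takes different scalars. Concretely: suppose for contradiction $\bar{\mathbf{I}}_{\bar v}$ does \emph{not} commute with $s$. Then there are two characters $\chi_1\ne\chi_2$ of $\bar{\mathbf{E}}_\ell$ appearing in $W_\ell$ that restrict to the \emph{same} character of the subgroup $H$ but to \emph{different} characters of $f_{\bar{v}}(\mathbb{E}_\ell^*)$ — indeed if $\chi_1|_{f_{\bar v}(\mathbb{E}_\ell^*)}\ne\chi_2|_{f_{\bar v}(\mathbb{E}_\ell^*)}$ while $\chi_1|_{f_{\bar v}(H)}=\chi_2|_{f_{\bar v}(H)}$, the commuting hypothesis on $f_{\bar v}(H)$ would force $s$ to act compatibly on $H$-isotypic pieces but not on $\bar{\mathbf{I}}_{\bar v}$-isotypic pieces. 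The difference $\chi_1-\chi_2=\sum_j(m_j^{(1)}-m_j^{(2)})\theta_{c_4!}^{\ell^j}$ has all coefficients in $[-c,c]$, hence as a character of $\bar{\mathbf{E}}_\ell(\mathbb{F}_\ell)=\mathbb{E}_\ell^*$ it is $x\mapsto x^{a}$ for some integer $a$ with $|a|\le c(1+\ell+\cdots+\ell^{c_4!-1})$ — but more to the point, restricted to the cyclic group $\mathbb{E}_\ell^*$ of order $\ell^{c_4!}-1$ the relevant quantity that must vanish on $H$ but not on $\mathbb{E}_\ell^*$ is governed by the exponent bound $c$ together with the index $[\mathbb{E}_\ell^*:H]$. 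The nonvanishing of $\chi_1-\chi_2$ on $\mathbb{E}_\ell^*$ together with its vanishing on $H$ means $[\mathbb{E}_\ell^*:H]$ does not divide the order of $\chi_1-\chi_2$ in the character group; but a character with all fundamental-exponents in $[-c,c]$ has order at least $(\ell-1)/c$ on $\mathbb{E}_\ell^*$ (this is where one unwinds $\mathbb{E}_\ell^*\to\mathbb{F}_\ell^*$ via the norm and a direct estimate). Combined with $c\cdot[\mathbb{E}_\ell^*:H]\le\ell-1$, this is a contradiction. Hence $\bar{\mathbf{I}}_{\bar v}$ commutes with $s$, and $f_{\bar v}(\mathbb{E}_\ell^*)\subset\bar{\mathbf{I}}_{\bar v}(\mathbb{F}_\ell)$ does too.

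I expect the main obstacle to be making the combinatorial estimate on orders of characters clean and uniform: one must pass carefully between the fundamental characters $\theta_{c_4!}^{\ell^j}$ of the torus $\bar{\mathbf{E}}_\ell$ and honest homomorphisms $\mathbb{E}_\ell^*\to\bar{\mathbb{F}}_\ell^*$, keeping track that "exponents in $[0,c]$, not all equal" is exactly the $\ell$-restricted normalization of Definition \ref{restrict}, and deduce that a nontrivial such character has order $>(\ell-1)/c$ on the cyclic group $\mathbb{E}_\ell^*$. Once that order bound is in hand, the divisibility/index argument above closes the gap; the references \cite[$\mathsection3$]{Hall} and \cite[$\mathsection3$]{SL2} supply the precise form of this estimate, and I would cite them for the arithmetic heart while giving the structural weight-space argument in full.
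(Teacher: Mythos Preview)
The paper does not give its own proof of Lemma~\ref{245}; it is quoted from \cite[$\mathsection3$]{Hall} and \cite[$\mathsection3$]{SL2} and used as a black box. Your second paragraph is essentially the standard argument from those references: distinct $\ell$-restricted weights of $\bar{\mathbf{E}}_\ell$ with exponents in $[0,c]$ cannot restrict to the same character of a subgroup $H\subset\mathbb{E}_\ell^*$ of index at most $(\ell-1)/c$, so each $f_{\bar v}(H)$-isotypic component of $W_\ell\otimes\bar{\mathbb{F}}_\ell$ is already a single $\bar{\mathbf{E}}_\ell$-weight space; anything preserving the former therefore preserves the latter and commutes with the torus $\bar{\mathbf{I}}_{\bar v}$.

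Your first paragraph, however, is circular as written. You assert that $s$, ``commuting with the abelian connected group $f_{\bar v}(\mathbb{E}_\ell^*)$ \ldots\ acts as a scalar $\lambda_\chi$ on each weight space $W_\chi$ of the character $\chi$ of $f_{\bar v}$,'' but the hypothesis only gives commutation with $f_{\bar v}(H)$; commutation with $f_{\bar v}(\mathbb{E}_\ell^*)$ is the conclusion. The correct setup is to diagonalize $W_\ell$ for $\bar{\mathbf{E}}_\ell$ alone (no reference to $s$), note that $s$ preserves each $H$-isotypic piece (this is exactly the hypothesis), and then use the character-separation of your second paragraph to identify those pieces with the $\bar{\mathbf{E}}_\ell$-weight spaces. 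Since your second paragraph does carry this through, the slip is expository rather than fatal, but paragraph one should be struck or rewritten. One further small point: writing $a=\sum_j(m_j^{(1)}-m_j^{(2)})\ell^j$ with $|m_j^{(1)}-m_j^{(2)}|\le c$, your bound gives $|a|\le c(\ell^{c_4!}-1)/(\ell-1)\le(\ell^{c_4!}-1)/[\mathbb{E}_\ell^*:H]$ with non-strict inequality, so at the exact boundary $c\cdot[\mathbb{E}_\ell^*:H]=\ell-1$ with all differences $\pm c$ the divisibility argument does not quite close; in the paper's applications the inequality is always strict (e.g.\ $\ell>c_5c_6+1$ in the proof of Theorem~\ref{242}), so this is harmless there, but you should either note the strictness or sharpen the estimate.
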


Recall from Definition \ref{comsys} that there is a finite subset $S\subset\Sigma_K$ such that $\phi_\ell$ is unramified outside
$S_\ell:=S\cup\{v\in\Sigma_K: v|\ell \}$ for all $\ell$.\\

\noindent\textit{\textbf{Proof of Theorem \ref{242}.}} The following arguments are influenced by the arguments Serre gave for \cite[Theorem 1]{SL2}. 

\begin{proof} Denote the image of $\mu_\ell(I_{\bar{v}}^{\mathrm{t}})$ under the map $\bar{\Gamma}_\ell/(\bar{\Gamma}_\ell\cap \bar{\mathbf{S}}_\ell(\mathbb{F}_\ell))\hookrightarrow\mathrm{GL}(W_\ell)$ by $\bar{\Omega}_{\bar{v}}$ whenever $\bar{v}|\ell$. Let $\bar{J}_\ell$ be a maximal abelian normal subgroup of $\bar{\Omega}_\ell:=\mu_\ell(\mathrm{Gal}_K)$.  We first prove that $\bar{\Omega}_{\bar{v}}$ commutes with $\bar{J}_\ell$ if $\ell$ is large. Since $\bar{\Omega}_{\bar{v}}$ and $\bar{J}_\ell$ are abelian and
\begin{equation*}
[\bar{\Omega}_{\bar{v}}:\bar{\Omega}_{\bar{v}}\cap \bar{J}_\ell]\leq c_5
\end{equation*}
by Theorem \ref{Jordan} (Jordan), the tame inertia torus $\bar{\mathbf{I}}_{\bar{v}}$ at $\bar{v}$ (Definition \ref{tori}) and hence  $f_{\bar{v}}(\mathbb{E}_\ell^*)=\bar{\Omega}_{\bar{v}}$  commute with $\bar{J}_\ell$ if $\ell>c_5c_6+1$ by rigidity (Lemma \ref{245}). For any $\bar{v}_1,\bar{v}_2|\ell$, since $\bar{\Omega}_{\bar{v}_1}\cap \bar{J}_\ell$ commutes with $\bar{\Omega}_{\bar{v}_2}\cap \bar{J}_\ell$ which are of bounded index in $\bar{\Omega}_{\bar{v}_1}$ and $\bar{\Omega}_{\bar{v}_2}$ respectively, we obtain $\bar{\mathbf{I}}_{\bar{v}_1}$ commutes with $\bar{\mathbf{I}}_{\bar{v}_2}$ if $\ell\gg1$ by rigidity (Lemma \ref{245}). The subgroup $\bar{H}_\ell$ of $\bar{\Omega}_\ell$ generated by the inertia subgroups $\bar{\Omega}_{\bar{v}}$ for all $\bar{v}|\ell$ is abelian and normal for $\ell\gg1$. As $\bar{J}_\ell$ is maximal normal abelian in $\bar{\Omega}_\ell$, $\bar{H}_\ell\subset \bar{J}_\ell$ for all $\ell\gg1$. Therefore,  $\bar{\Omega}_\ell/\bar{J}_\ell$ corresponds to a field extension of $K$ of degree bounded by $c_5$ that only ramifies in $S$ (Definition \ref{comsys}) for $\ell\gg1$. By Hermite's Theorem \cite[p.122]{Lang}, the composite of these fields is still a finite field extension $K'$ of $K$. Therefore, $\mu_\ell(\mathrm{Gal}_{K'})\subset \bar{J}_\ell$ for $\ell\gg1$.

 Since the representations $\{\phi_\ell\}$ come from \'etale cohomology and $I_{\bar{v}}\cap \mathrm{Gal}_{K'}$ is the inertia subgroup of $\mathrm{Gal}_{K'}$ at $\bar{v}$ \cite[Proposition 9.5]{Neu}, they are potentially semi-stable which means there exists a finite extension $K''$ of $K'$ such that $\phi_\ell(I_{\bar{v}}\cap \mathrm{Gal}_{K''})$ is unipotent for any $\bar{v}$ not dividing $\ell$ \cite[$\mathsection1$]{de}. Therefore, for each $\ell\gg1$ we have a finite abelian extension of $K''$ with Galois group $\mu_\ell(\mathrm{Gal}_{K''})$ contained in $\bar{J}_\ell$ that only ramifies at $v\in\Sigma_{K''}$ dividing $\ell$. Since $\mu_\ell(\Gal_{K''})$ is an abelian Galois group over $K''$, each ramified prime $v\in\Sigma_{K''}$ dividing large $\ell$  corresponds to an inertia subgroup $\bar{I}_v''\subset \mu_\ell(\mathrm{Gal}_{K''})$ and there are at most $[K'':\Q]$ of them. For each inertia subgroup $\bar{I}_v''$, choose a tame inertia torus $\bar{\mathbf{I}}_{\bar{v}}$ such that $\bar{I}_v''\subset  \bar{\mathbf{I}}_{\bar{v}}(\F_\ell)$. Since these tame inertia tori commute with each other, the algebraic group  $\bar{\mathbf{I}}_\ell$ generated by them is an $\F_\ell$-torus, called \emph{the inertia torus at $\ell$}. Since $\{\bar{\mathbf{I}}_{\bar{v}}\rightarrow \GL_{W_\ell}\}_{\bar{v}|\ell\gg1}$ have bounded formal characters (Lemma \ref{244}(i)) and each $\bar{\mathbf{I}}_\ell$ is generated by at most $[K'':\Q]$ tame inertia tori,   $\{\bar{\mathbf{I}}_\ell\hookrightarrow\GL_{W_\ell}\}_{\ell\gg1}$ have bounded formal characters by Proposition \ref{p204}. This proves (i). 
 
Let $\bar{I}_\ell''$ be the subgroup of $\mu_\ell(\mathrm{Gal}_{K''})$ generated by $\bar{I}_v''$ for all $v|\ell$. Then, for $\ell\gg1$ we have 
 \begin{equation*}
 \mu_\ell(\mathrm{Gal}_{K''})/\bar{I}_\ell''
 \end{equation*}
is the Galois group of a finite abelian extension of $K''$ that is unramified at every non-Archimedean valuation. By abelian class field theory, these fields generate a finite extension $K'''$ of $K''$. Choose $L$ normal over $K$ such that $K'''\subset L$. Then, we obtain
\begin{equation*}
(\ast):~~~\mu_\ell(\mathrm{Gal}_L)\subset \bar{I}_\ell''\subset \bar{\mathbf{I}}_\ell(\F_\ell).
\end{equation*}
It remains to prove (ii). Suppose $\bar{\mathbf{I}}_\ell$ is generated by tame inertia tori $\bar{\mathbf{I}}_{\bar{v}_i}$ for $1\leq i\leq k$ for some fixed $k\leq [K'':\Q]$. We have
\begin{equation*}
[\bar{\mathbf{I}}_\ell(\F_\ell):\mu_\ell(\mathrm{Gal}_L)]=[\bar{\mathbf{I}}_\ell(\F_\ell):\bar{\mathbf{I}}_\ell(\F_\ell)\cap\bar{\Omega}_\ell]\cdot[\bar{\mathbf{I}}_\ell(\F_\ell)\cap\bar{\Omega}_\ell:\mu_\ell(\mathrm{Gal}_L)]\end{equation*}
\begin{equation*}
\leq [\bar{\mathbf{I}}_\ell(\F_\ell): f_{\bar{v}_1}(\mathbb{E}_\ell^*)\cdots f_{\bar{v}_k}(\mathbb{E}_\ell^*)]\cdot[L:K].\vspace{.1in}
\end{equation*}
It suffices to show $[\bar{\mathbf{I}}_\ell(\F_\ell): f_{\bar{v}_1}(\mathbb{E}_\ell^*)\cdots f_{\bar{v}_k}(\mathbb{E}_\ell^*)]$ is bounded independent of $\ell$. The proof is identical to Lemma \ref{244}(ii) since $f_{\bar{v}_1}(\mathbb{E}_\ell^*)\cdots f_{\bar{v}_k}(\mathbb{E}_\ell^*)$ is the image of 
\begin{equation*}
f_{\bar{v}_1}\times\cdots\times f_{\bar{v}_k}: (\mathbb{E}_\ell^*)^k \to \mathrm{GL}(W_\ell),
\end{equation*}
$\bar{\mathbf{I}}_\ell$ is the image of
\begin{equation*}
w_{\bar{v}_1}\times\cdots\times w_{\bar{v}_k}: (\bar{\mathbf{E}}_\ell)^k \to \mathrm{GL}_{W_\ell},
\end{equation*}
$k$ (depending on $\ell$) is always less than $[K'':\Q]$,
and the exponents of characters ($\ell$-restricted \ref{tori}) of $w_{\bar{v}_1}\times\cdots\times w_{\bar{v}_k}$ are uniformly bounded. Therefore, there exists $c_8=c_8(N)$ such that $[\bar{\mathbf{I}}_\ell(\F_\ell):\mu_\ell(\mathrm{Gal}_L)]\leq c_8$ for $\ell\gg1$.
\end{proof}

\subsection{Construction of $\bar{\mathbf{G}}_\ell$} An $\mathbb{F}_\ell$-torus $\bar{\mathbf{I}}_\ell\subset\mathrm{GL}_{W_\ell}$ is constructed in $\mathsection 2.4$ for $\ell\gg1$ and we have the following map defined in Theorem \ref{241}
\begin{equation*}
t_\ell:\bar{\mathbf{N}}_\ell\twoheadrightarrow \bar{\mathbf{N}}_\ell/\bar{\mathbf{S}}_\ell \hookrightarrow\mathrm{GL}_{W_\ell}.
\end{equation*}
One has to show that $\bar{\mathbf{I}}_\ell\subset t_\ell(\bar{\mathbf{N}}_\ell)$ so that $t_\ell^{-1}(\bar{\mathbf{I}}_\ell)$ is connected. It suffices to consider tame inertia tori $\bar{\mathbf{I}}_{\bar{v}}$. Recall vector space $U_\ell$ from Theorem \ref{241}.

\begin{lemma}\label{251} Let $\bar{\mathbf{H}}_\ell$ be an algebraic subgroup of $\mathrm{GL}_{\bar{V}_\ell}$. Then $\bar{\mathbf{H}}_\ell$ acts on $\bar{U}_\ell$. If $\bar{\mathbf{H}}_\ell$ is invariant on the subspace 
\begin{equation*}
\bar{W}_\ell\subset \bar{U}_\ell
\end{equation*}
fixed by $\bar{\mathbf{S}}_\ell$, then $\bar{\mathbf{H}}_\ell$ is contained in $\bar{\mathbf{N}}_\ell$.
\end{lemma}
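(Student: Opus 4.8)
The plan is to realize $\bar{\mathbf{S}}_\ell$ as the \emph{full} pointwise stabilizer of $\bar{W}_\ell$ inside $\GL_{\bar{V}_\ell}$ and then to use the elementary fact that a pointwise stabilizer of a subspace is normal in the corresponding setwise stabilizer. Thus if $\bar{\mathbf{H}}_\ell$ lies in the setwise stabilizer of $\bar{W}_\ell$, it automatically normalizes $\bar{\mathbf{S}}_\ell$.

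First I would dispose of the opening assertion: any $g\in\GL_{\bar{V}_\ell}$ acts diagonally on each tensor power $\otimes^i\bar{V}_\ell$, hence on $\bar{U}_\ell=\bigoplus_{i=1}^{c_3}\otimes^i\bar{V}_\ell$, and this is functorial; so the inclusion $\bar{\mathbf{H}}_\ell\hookrightarrow\GL_{\bar{V}_\ell}$ induces an action of $\bar{\mathbf{H}}_\ell$ on $\bar{U}_\ell$. Nothing beyond functoriality of $\otimes$ and $\oplus$ is needed here.

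For the main statement, set $\bar{\mathbf{P}}_\ell:=\{g\in\GL_{\bar{V}_\ell}:\ g(\bar{W}_\ell)=\bar{W}_\ell\}$, the setwise stabilizer of $\bar{W}_\ell$ in $\bar{U}_\ell$, and let $r:\bar{\mathbf{P}}_\ell\to\GL_{\bar{W}_\ell}$ be the restriction homomorphism. By the very definition of $\bar{W}_\ell$ as the $\bar{\mathbf{S}}_\ell$-fixed subspace one has $\bar{\mathbf{S}}_\ell\subseteq\ker r$; conversely, the second sentence of Theorem \ref{241} — which is precisely what the choice of the constant $c_3=c_3(N)$ guarantees, following \cite{EHK} and \cite{SL2} — says that any element not in $\bar{\mathbf{S}}_\ell$ fails to fix some vector of $\bar{W}_\ell$, i.e. $\ker r\subseteq\bar{\mathbf{S}}_\ell$. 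Hence $\ker r=\bar{\mathbf{S}}_\ell$. Since $\ker r$ is normal in $\bar{\mathbf{P}}_\ell$ and the hypothesis "$\bar{\mathbf{H}}_\ell$ is invariant on $\bar{W}_\ell$" means exactly $\bar{\mathbf{H}}_\ell\subseteq\bar{\mathbf{P}}_\ell$, we conclude that $\bar{\mathbf{H}}_\ell$ normalizes $\ker r=\bar{\mathbf{S}}_\ell$, i.e. $\bar{\mathbf{H}}_\ell\subseteq N_{\GL_{\bar{V}_\ell}}(\bar{\mathbf{S}}_\ell)=\bar{\mathbf{N}}_\ell$. Equivalently, on points: for $g\in\bar{\mathbf{H}}_\ell$, $s\in\bar{\mathbf{S}}_\ell$ and $w\in\bar{W}_\ell$ one has $g^{-1}w\in\bar{W}_\ell$, so $s(g^{-1}w)=g^{-1}w$, so $gsg^{-1}$ fixes $w$; as $w$ was arbitrary, $gsg^{-1}$ fixes $\bar{W}_\ell$ pointwise and therefore lies in $\bar{\mathbf{S}}_\ell$.

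The one step carrying real content is the reverse inclusion $\ker r\subseteq\bar{\mathbf{S}}_\ell$: the other inclusion is tautological, but this one is exactly the "moreover" clause of Theorem \ref{241}, and I would simply invoke it. A secondary, essentially cosmetic, point is that $\bar{\mathbf{H}}_\ell$ need not a priori be reduced, so the hypothesis and conclusion should be read scheme-theoretically; but the argument above is functorial in test $\bar{\F}_\ell$-algebras and goes through verbatim at that level, once one notes that $\ker r=\bar{\mathbf{S}}_\ell$ holds as an identity of group schemes — which is again part of Theorem \ref{241}.
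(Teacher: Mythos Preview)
Your proof is correct and is essentially the same as the paper's. The paper argues by contradiction (if $x\in\bar{\mathbf{H}}_\ell\setminus\bar{\mathbf{N}}_\ell$, pick $s$ with $xsx^{-1}\notin\bar{\mathbf{S}}_\ell$, invoke the ``moreover'' clause of Theorem~\ref{241} to find $w\in\bar{W}_\ell$ not fixed by $xsx^{-1}$, then deduce $x^{-1}w\notin\bar{W}_\ell$), while your ``equivalently, on points'' paragraph is the direct form of the same computation; your additional framing via $\ker r=\bar{\mathbf{S}}_\ell$ being normal in the setwise stabilizer is a clean repackaging but rests on the identical input from Theorem~\ref{241}.
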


\begin{proof} Let $x\in \bar{\mathbf{H}}_\ell \backslash \bar{\mathbf{N}}_\ell$. Then there exists $s\in \bar{\mathbf{S}}_\ell$ such that $xsx^{-1}\notin \bar{\mathbf{S}}_\ell$. There exists $w\in \bar{W}_\ell$ such that 
\begin{equation*}
xsx^{-1}w\neq w
\end{equation*}
by the last statement of Theorem \ref{241}. Therefore,
\begin{equation*}
sx^{-1}w\neq x^{-1}w
\end{equation*}
implies $x^{-1}w\notin\bar{W}_\ell$, a contradiction. Hence, $\bar{\mathbf{H}}_\ell$ is contained in $\bar{\mathbf{N}}_\ell$.
\end{proof}

\begin{proposition}\label{252} The $\mathbb{F}_\ell$-torus $\bar{\mathbf{I}}_\ell$ in $\mathrm{GL}_{W_\ell}$ is a subgroup of the image of 
\begin{equation*}
t_\ell:\bar{\mathbf{N}}_\ell\twoheadrightarrow \bar{\mathbf{N}}_\ell/\bar{\mathbf{S}}_\ell \hookrightarrow\mathrm{GL}_{W_\ell}
\end{equation*}
defined in Theorem \ref{241}.
\end{proposition}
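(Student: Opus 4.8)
Since $\bar{\mathbf{I}}_\ell$ is generated by finitely many tame inertia tori $\bar{\mathbf{I}}_{\bar v}$ and $t_\ell(\bar{\mathbf{N}}_\ell)$ is a subgroup, it suffices to show $\bar{\mathbf{I}}_{\bar v}\subset t_\ell(\bar{\mathbf{N}}_\ell)$ for each relevant $\bar v\mid\ell$ with $\ell\gg1$. The naive attempt — observe $f_{\bar v}(\mathbb{E}_\ell^*)=\mu_\ell(I_{\bar v})=t_\ell(\phi_\ell(I_{\bar v}))$ lies in the Zariski-closed subgroup $t_\ell(\bar{\mathbf{N}}_\ell)$ (Theorem \ref{241}) and take Zariski closures — does not work, because $f_{\bar v}(\mathbb{E}_\ell^*)=w_{\bar v}(\bar{\mathbf{E}}_\ell(\mathbb{F}_\ell))$ is far from Zariski dense in $\bar{\mathbf{I}}_{\bar v}=w_{\bar v}(\bar{\mathbf{E}}_\ell)$: the $\mathbb{F}_\ell$-points of $\bar{\mathbf{E}}_\ell=\mathrm{Res}_{\mathbb{E}_\ell/\mathbb{F}_\ell}(\mathbb{G}_m)$ lie on a single $\mathbb{G}_m$ over $\bar{\mathbb{F}}_\ell$, so their closure has dimension at most $1$. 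One must instead extend the tame inertia action to a torus ``upstairs'' in $\mathrm{GL}_{\bar V_\ell}$ and transport it down by $t_\ell$; this is where Lemma \ref{251} enters.

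\textbf{The companion torus.} Fix $\bar v\mid\ell$, set $\Gamma:=\phi_\ell(I_{\bar v})$ and $\Gamma^{\mathrm w}:=\phi_\ell(I_{\bar v}^{\mathrm w})$. For $\ell\gg1$ every $\ell$-element of $\mathrm{GL}_N(\mathbb{F}_\ell)$ has order $\ell$, so $\Gamma^{\mathrm w}\subset\bar\Gamma_\ell[\ell]\subset\bar\Gamma_\ell^+\subset\bar{\mathbf{S}}_\ell(\mathbb{F}_\ell)$ by Theorem \ref{N1}; moreover $\Gamma\subset\bar\Gamma_\ell\subset\bar{\mathbf{N}}_\ell$, which preserves the $\bar{\mathbf{S}}_\ell$-fixed subspace $\bar W_\ell\subset\bar U_\ell$, while $\Gamma^{\mathrm w}\trianglelefteq\Gamma$ is a normal $\ell$-subgroup with $\Gamma/\Gamma^{\mathrm w}$ cyclic of order $m$ prime to $\ell$. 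By Schur--Zassenhaus, $\Gamma$ has a semisimple element $h$ generating a complement $C=\langle h\rangle\cong\Gamma/\Gamma^{\mathrm w}$; in particular $C\subset\bar{\mathbf{N}}_\ell$, so $C$ stabilizes $\bar W_\ell$. By Brauer--Nesbitt the characteristic polynomial of $h$ on $V_\ell$ equals that of $\rho_{\bar v}^{\ss}$ at the corresponding element, so $\rho_{\bar v}^{\ss}$ becomes, after conjugation in $\mathrm{GL}_N(\bar{\mathbb{F}}_\ell)$ carrying its tame image onto $C$, a representation of $\mathbb{F}_{\ell^{N!}}^*$ — it factors through this quotient by Lemma \ref{221} — with $\ell$-restricted exponents bounded by $ei$ (Theorem \ref{233}). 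Extending it as in \cite[$\mathsection3$]{Hall} yields an $\ell$-restricted morphism $\mathrm{Res}_{\mathbb{F}_{\ell^{N!}}/\mathbb{F}_\ell}(\mathbb{G}_m)\to\mathrm{GL}_{\bar V_\ell}$; let $\bar{\mathbf{J}}_{\bar v}$ be its image, a torus containing $C$.

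\textbf{Descending by $t_\ell$.} The key point is $\bar{\mathbf{J}}_{\bar v}\subset\bar{\mathbf{N}}_\ell$ via Lemma \ref{251}, i.e.\ that $\bar{\mathbf{J}}_{\bar v}$ stabilizes $\bar W_\ell$. A point count of the type used in Lemma \ref{244}(ii) (\cite[Lemma 3.5]{Nori} and Lang's theorem) shows $C$ has index bounded independently of $\ell$ in the group of $\mathbb{F}_\ell$-points of $\bar{\mathbf{J}}_{\bar v}$; since the $\bar{\mathbf{J}}_{\bar v}$-weights occurring in $\bar U_\ell=\bigoplus_{i\le c_3}\otimes^iV_\ell$ are tensor products of the bounded-exponent weights above, the ratio of any two distinct such weights is a nontrivial character of $\bar{\mathbf{J}}_{\bar v}$ whose order on the $\mathbb{F}_\ell$-points — hence on $C$ — grows with $\ell$. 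Thus for $\ell\gg1$ the $C$-isotypic decomposition of $\bar U_\ell$ refines to the $\bar{\mathbf{J}}_{\bar v}$-weight decomposition, so the $C$-stable subspace $\bar W_\ell$ is $\bar{\mathbf{J}}_{\bar v}$-stable and Lemma \ref{251} gives $\bar{\mathbf{J}}_{\bar v}\subset\bar{\mathbf{N}}_\ell$. Now $t_\ell$ restricted to $\bar{\mathbf{N}}_\ell$ has bounded degree (Theorem \ref{241}), so composing it with the extension above is again an $\ell$-restricted morphism $\mathrm{Res}_{\mathbb{F}_{\ell^{N!}}/\mathbb{F}_\ell}(\mathbb{G}_m)\to\mathrm{GL}_{W_\ell}$; on $\mathbb{F}_\ell$-points it sends $\mathbb{F}_{\ell^{N!}}^*\twoheadrightarrow C=\langle h\rangle$ and then $h\mapsto t_\ell(h)=\mu_\ell(\gamma_0)$ (with $\phi_\ell(\gamma_0)=h$), a generator of $f_{\bar v}(\mathbb{E}_\ell^*)=\mu_\ell(I_{\bar v})$, so after matching levels (replace $N!$ by $\mathrm{lcm}(N!,c_4!)$, which is harmless) it agrees with $f_{\bar v}$ on rational points. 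By uniqueness of the $\ell$-restricted extension \cite[$\mathsection3$]{Hall} it coincides with $w_{\bar v}$, whence $\bar{\mathbf{I}}_{\bar v}=\mathrm{im}(w_{\bar v})=t_\ell(\bar{\mathbf{J}}_{\bar v})\subset t_\ell(\bar{\mathbf{N}}_\ell)$; taking the subgroup generated by all the $\bar{\mathbf{I}}_{\bar v}$ gives $\bar{\mathbf{I}}_\ell\subset t_\ell(\bar{\mathbf{N}}_\ell)$.

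\textbf{Main obstacle.} I expect the decisive step to be the one in the third paragraph: upgrading ``$C$ stabilizes $\bar W_\ell$'' to ``$\bar{\mathbf{J}}_{\bar v}$ stabilizes $\bar W_\ell$''. Since $C$ is never Zariski dense in $\bar{\mathbf{J}}_{\bar v}$, this cannot be done by taking closures and genuinely needs a rigidity/weight-separation argument, in which both the bounded-exponent conclusion of Theorem \ref{233} and the bounded-index conclusion of Lemma \ref{244}(ii) are essential. A secondary, purely bookkeeping, difficulty is keeping the field levels $N!$ and $c_4!$ and the various fundamental-character identifications consistent, so that $t_\ell$ of the companion torus is identified with $w_{\bar v}$ itself rather than merely shown to contain its restriction to rational points.
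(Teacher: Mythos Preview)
Your argument is correct and follows the same route as the paper: lift the tame inertia action to an $\ell$-restricted morphism $\alpha_\ell:\bar{\mathbf{E}}_\ell\to\mathrm{GL}_{V_\ell}$ via a Schur--Zassenhaus splitting, show its image lands in $\bar{\mathbf{N}}_\ell$ using Lemma~\ref{251}, then identify $t_\ell\circ\alpha_\ell$ with $w_{\bar v}$ by uniqueness of the $\ell$-restricted extension.

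Two small simplifications the paper makes relative to your version. First, the level-matching disappears if you work at level $c_4!$ throughout: since $N\le c_4$ (the case $\bar{\mathbf{S}}_\ell=1$ already forces $c_4\ge\dim U_\ell\ge N$), the prime-to-$\ell$ quotient of $J_{\bar v}:=\phi_\ell(I_{\bar v})$ has order dividing $\ell^{c_4!}-1$, so one may take the splitting $i_{\bar v}:\mathbb{E}_\ell^*\to J_{\bar v}$ directly and compare $t_\ell\circ\alpha_\ell$ with $w_{\bar v}$ on the same source. Second, your ``main obstacle'' is handled in the paper in one line, as a built-in feature of $\ell$-restricted extensions rather than via a separate rigidity estimate: distinct $\ell$-restricted characters of $\bar{\mathbf{E}}_\ell$ restrict to distinct characters of $\mathbb{E}_\ell^*$ by definition, so the $\mathbb{E}_\ell^*$-eigenspace decomposition of $\bar U_\ell$ already coincides with the $\beta_\ell(\bar{\mathbf{E}}_\ell)$-weight decomposition, and any $\mathbb{E}_\ell^*$-stable subspace (the action being semisimple) is automatically $\beta_\ell(\bar{\mathbf{E}}_\ell)$-stable. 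Working with $\mathbb{E}_\ell^*$ itself rather than its image $C$ also removes the need for the bounded-index point count.
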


\begin{proof} Let $\bar{v}|\ell$ be a valuation of $\bar{K}$ and $I_{\bar{v}}$ the inertia subgroup of $\mathrm{Gal}_K$ at $\bar{v}$. The restriction $\phi_\ell: I_{\bar{v}}\rightarrow \mathrm{GL}(V_\ell)$ factors through a finite quotient $\pi_{\bar{v}}:I_{\bar{v}}\twoheadrightarrow J_{\bar{v}}$ such that $|J_{\bar{v}}|=\ell^k\cdot(\ell^{c_4!}-1)$. Recall vector spaces $W_\ell\subset U_\ell$ from Theorem \ref{241} and $f_{\bar{v}}:\mathbb{E}_\ell^*\rightarrow \GL(W_\ell)$ from Definition \ref{tori}.
Consider the following diagram so that 
$$r_\ell\circ \phi_\ell\circ i_{\bar{v}}=f_{\bar{v}}'$$
and the actions of $\mathbb{E}_\ell^*$ on $W_\ell$ via $f_{\bar{v}}'$ and $f_{\bar{v}}$ are the same.
Here $r_\ell$ is the obvious map and $i_{\bar{v}}$ is a splitting of $\pi_{\bar{v}}$. 
This is possible because $\mathbb{E}_\ell^*$ defined in  $\mathsection2.4$ is cyclic of order $(\ell^{c_4!}-1)$ prime to $\ell$.  
\begin{equation*}
\xymatrix{
J_{\bar{v}} \ar[d]_{\phi_\ell} \ar@{->>}[r]_{\pi_{\bar{v}}}  &\mathbb{E}_\ell^*\ar@/_1pc/[l]_{i_{\bar{v}}} \ar[d]^{f_{\bar{v}}'}\\
\mathrm{GL}_{V_\ell} \ar[r]^{r_\ell} &\mathrm{GL}_{U_\ell}}
\end{equation*}

If $\ell$ is sufficiently large, then the exponents of the characters ($\ell$-restricted) of representations $\phi_\ell\circ i_{\bar{v}}$ and $r_\ell\circ\phi_\ell\circ i_{\bar{v}}$ belong to $[0,i]$ and $[0,ic_3]$  respectively by Theorem \ref{233} and the construction of $U_\ell$. Recall $\bar{\mathbf{E}}_\ell$ from definition \ref{tori}. By Weil restriction of scalars, we obtain two $\mathbb{F}_\ell$-morphisms
\begin{equation*}
\alpha_\ell:\bar{\mathbf{E}}_\ell\to \mathrm{GL}_{V_\ell}
\end{equation*}
\begin{equation*}
\beta_\ell:\bar{\mathbf{E}}_\ell\to \mathrm{GL}_{U_\ell}.
\end{equation*} 
Since $r_\ell\circ\alpha_\ell$ and $\beta_\ell$ are both $\ell$-restricted \cite[$\mathsection3$]{Hall} and equal to $r_\ell\circ\phi_\ell\circ i_{\bar{v}}$ when restricting to $\mathbb{E}_\ell^*$, by uniqueness \cite[$\mathsection3$]{Hall} we have
\begin{equation*}
r_\ell\circ\alpha_\ell=\beta_\ell.
\end{equation*}

The image $r_\ell\circ\phi_\ell\circ i_{\bar{v}}(\mathbb{E}_\ell^*)=f_{\bar{v}}'(\mathbb{E}_\ell^*)$ maps $W_\ell$ and hence $\bar{W}_\ell$ to itself, so $\beta_\ell(\bar{\mathbf{E}}_\ell)$ also maps $\bar{W}_\ell$ to itself. Since $r_\ell\circ\alpha_\ell(\bar{\mathbf{E}}_\ell)=\beta_\ell(\bar{\mathbf{E}}_\ell)$, we conclude that $\alpha_\ell(\bar{\mathbf{E}}_\ell)\subset \bar{\mathbf{N}}_\ell$ by Lemma \ref{251}. One also observes that the following morphism
\begin{equation*}
t_\ell:\bar{\mathbf{N}}_\ell\twoheadrightarrow \bar{\mathbf{N}}_\ell/\bar{\mathbf{S}}_\ell \hookrightarrow\mathrm{GL}_{W_\ell}
\end{equation*}
maps $\alpha_\ell(\bar{\mathbf{E}}_\ell)$ to $\bar{\mathbf{I}}_{\bar{v}}:=w_{\bar{v}}(\bar{\mathbf{E}}_\ell)$. Therefore, tame inertia torus $\bar{\mathbf{I}}_{\bar{v}}$ and thus $\bar{\mathbf{I}}_\ell$ is a subgroup of $t_\ell(\bar{\mathbf{N}}_\ell)$. \end{proof} 
\vspace{.1in}

\begin{definition}\label{normal} Let $L$ be the normal extension of $K$ in Theorem \ref{242}. Denote $\phi_\ell(\mathrm{Gal}_L)$ by $\bar{\gamma}_\ell$ for all $\ell$. Then $[\bar{\Gamma}_\ell:\bar{\gamma}_\ell]\leq [L:K]$ for all $\ell$.
\end{definition}

\noindent\textit{\textbf{Proof of Theorem \ref{redenv}(i),(ii).}}

\begin{proof}Since $\bar{\mathbf{S}}_\ell$ is a connected normal subgroup of $\bar{\mathbf{N}}_\ell$, $\bar{\mathbf{I}}_\ell$ is a torus, and $t_\ell$ is an $\mathbb{F}_\ell$-morphism, Proposition \ref{252} implies $t_\ell^{-1}(\bar{\mathbf{I}}_\ell)$, the preimage of the $\mathbb{F}_\ell$-torus $\bar{\mathbf{I}}_\ell$ is a connected $\mathbb{F}_\ell$-reductive group $\bar{\mathbf{G}}_\ell$. 
Moreover, $\bar{\gamma}_\ell\subset \bar{\mathbf{G}}_\ell(\F_\ell)$ by construction of $\bar{\mathbf{G}}_\ell$ for $\ell\gg1$. We obtain an exact sequences of $\F_\ell$ algebraic groups for $\ell\gg1$
\begin{equation*}
1\rightarrow \bar{\mathbf{S}}_\ell\rightarrow \bar{\mathbf{G}}_\ell\rightarrow \bar{\mathbf{I}}_\ell\rightarrow 1.
\end{equation*}
and hence 
\begin{equation*}
1\rightarrow \bar{\mathbf{S}}_\ell(\mathbb{F}_\ell)\rightarrow \bar{\mathbf{G}}_\ell(\mathbb{F}_\ell)\rightarrow \bar{\mathbf{I}}_\ell(\mathbb{F}_\ell).
\end{equation*}
Recall $\mu_\ell(\mathrm{Gal}_L)=t_\ell(\bar{\gamma}_\ell)$ from Theorem \ref{242}. Since the semisimple envelopes (Definition \ref{Nori}) of $\bar{\Gamma}_\ell$ and $\bar{\gamma}_\ell$ are identical for $\ell\gg1$ by Remark \ref{ind}, the above exact sequence implies 
\begin{equation*}
[\bar{\mathbf{G}}_\ell(\mathbb{F}_\ell):\bar{\gamma}_\ell]\leq [\bar{\mathbf{S}}_\ell(\mathbb{F}_\ell):\bar{\gamma}_\ell\cap \bar{\mathbf{S}}_\ell(\mathbb{F}_\ell)][\bar{\mathbf{I}}_\ell(\mathbb{F}_\ell):\mu_\ell(\mathrm{Gal}_L)]\leq 2^{N-1}c_8
\end{equation*}
by Proposition \ref{N2}(iii) and Theorem \ref{242} for $\ell\gg1$. Since the derived group of $\bar{\mathbf{G}}_\ell$ is $\bar{\mathbf{S}}_\ell$, the action of $\bar{\mathbf{G}}_\ell$ on the ambient space is semisimple if $\ell\gg1$ by Proposition \ref{N2}(ii). Therefore, we have proved Theorem \ref{redenv} (i) and (ii).\end{proof}

\noindent\textit{\textbf{Proof of Theorem \ref{redenv}(iii).}}

\begin{proof}Let $\bar{\mathbf{S}}_\ell^{\sc}\rightarrow \bar{\mathbf{S}}_\ell$ be the simply connected cover of $\bar{\mathbf{S}}_\ell$. The representation
$(\bar{\mathbf{S}}^{\sc}_\ell\rightarrow \bar{\mathbf{S}}_\ell\hookrightarrow\mathrm{GL}_{N,\mathbb{F}_\ell})\times\bar\F_\ell$
 is semisimple and has a $\mathbb{Z}$-form which belongs to a finite set of $\mathbb{Z}$-representations of simply-connected Chevalley schemes \cite[Theorem 24]{EHK} if $\ell\gg1$. Thus, $\{\bar{\mathbf{S}}_\ell\hookrightarrow\mathrm{GL}_{N,\mathbb{F}_\ell}\}_{\ell\gg1}$ have bounded formal characters (Definition \ref{bfc}'). Let $\bar{\mathbf{C}}_\ell$ be the center of $\bar{\mathbf{G}}_\ell$. Since $\bar{\mathbf{S}}_\ell$ acts semi-simply on $\bar{V}_\ell$ by Proposition \ref{N2}(ii) for $\ell\gg1$, we decompose the representation $\bar{\mathbf{S}}_\ell\rightarrow\mathrm{GL}(\bar{V}_\ell)$
into a sum of absolutely irreducible representations $\bar{M}_i$
\begin{equation*}
\bar{V}_\ell= (\bigoplus_1^{m_1}\bar{M}_1)\oplus(\bigoplus_1^{m_2}\bar{M}_2)\oplus\cdots \oplus (\bigoplus_1^{m_k}\bar{M}_k)
 \end{equation*}
  such that $\bar{M}_i\ncong \bar{M}_j$ if $i\neq j$. If $c\in\bar{\mathbf{C}}_\ell$, then $\bar{M}_i$ and $c(\bar{M}_i)$ are isomorphic representations of $\bar{\mathbf{S}}_\ell$ for all $i$. Hence, $c$ is invariant on $\oplus_1^{m_i}\bar{M}_i$ and $\oplus_1^{m_i}\bar{M}_i$ is a subrepresentation of $\bar{\mathbf{G}}_\ell$ on $\bar{V}_\ell$ for all $i$. Let $n_i$ be the dimension of $\bar{M}_i$. Denote the representation of $\bar{\mathbf{S}}_\ell$ on $\bar{M}_i$ under some coordinates by 
 \begin{equation*}
 u_i:\bar{\mathbf{S}}_\ell\rightarrow \mathrm{GL}_{n_i}(\bar{\mathbb{F}}_\ell).
 \end{equation*}
Then, the representation of $\bar{\mathbf{G}}_\ell$ on $\bigoplus_1^{m_i}\bar{M}_i$ is given by:
\begin{equation*}
q_i:\bar{\mathbf{G}}_\ell\rightarrow\mathrm{GL}_{n_im_i}(\bar{\mathbb{F}}_\ell)
\end{equation*}
so that when restricting to $\bar{\mathbf{S}}_\ell$, the action is ``diagonal''
 \begin{equation*}
 q_i:\bar{\mathbf{S}}_\ell\stackrel{u_i}{\rightarrow}\mathrm{GL}_{n_i}(\bar{\mathbb{F}}_\ell)\rightarrow \bigoplus_1^{m_i} \mathrm{GL}_{n_i}(\bar{\mathbb{F}}_\ell)\subset\mathrm{GL}_{n_im_i}(\bar{\mathbb{F}}_\ell)
 \end{equation*}
 \begin{equation*}
x\mapsto u_i(x)\mapsto (u_i(x),...,u_i(x)).\hspace{.5in} 
 \end{equation*}
Since $u_i$ is a irreducible representation and $q_i(c)$ commutes with $q_i(\bar{\mathbf{S}}_\ell)$, $q_i(c)$ is contained in the subgroup
 \begin{equation*}
  \bar{\mathbf{H}}_i= \begin{pmatrix}
      \bar{\mathbf{D}}_{11} &\bar{\mathbf{D}}_{12} &... &\bar{\mathbf{D}}_{1m_i}\\
      \bar{\mathbf{D}}_{21} &\bar{\mathbf{D}}_{22}  &... &\bar{\mathbf{D}}_{2m_i}\\
      \vdots &\vdots &\ddots &\vdots\\
      \bar{\mathbf{D}}_{m_i1} &\bar{\mathbf{D}}_{m_i2} &... &\bar{\mathbf{D}}_{m_im_i}
\end{pmatrix},
\end{equation*}
where $\bar{\mathbf{D}}_{jk}$ is the subgroup of scalars of $\mathrm{GL}_{n_i}(\bar{\mathbb{F}}_\ell)$ for all $1\leq j\leq m_i$, $1\leq k\leq m_i$. We see that $\bar{\mathbf{H}}_i$ is isomorphic to $\mathrm{GL}_{m_i}(\bar{\mathbb{F}}_\ell)$. Since $q_i(\bar{\mathbf{C}}_\ell)$ is a diagonalizable group which commutes with $q_i(\bar{\mathbf{S}}_\ell)$ and $q_i|_{\bar{\mathbf{S}}_\ell}$ is ``diagonal'', we may assume $q_i(\bar{\mathbf{C}}_\ell)$ is contained in the following torus $\bar{\mathbf{D}}_i$ for all $i$
 \begin{equation*}
   \bar{\mathbf{D}}_i=\begin{pmatrix}
      \bar{\mathbf{D}}_{11} &0 &... &0\\
      0 &\bar{\mathbf{D}}_{22}  &... &0\\
      \vdots &\vdots &\ddots &\vdots\\
      0 &0 &... &\bar{\mathbf{D}}_{m_im_i}
\end{pmatrix}
\end{equation*}
after a change of coordinates by some element in $\bar{\mathbf{H}}_i\cong \mathrm{GL}_{m_i}(\bar{\mathbb{F}}_\ell)$. Therefore, we may assume that $\bar{\mathbf{C}}_\ell$ is a subgroup of
\begin{equation*}
\bar{\mathbf{B}}_\ell:=\bar{\mathbf{D}}_1\times \bar{\mathbf{D}}_2\times\cdots\times \bar{\mathbf{D}}_k\subset\mathrm{GL}_N(\bar{\mathbb{F}}_\ell).
\end{equation*}
in suitable coordinates. Torus $\bar{\mathbf{B}}_\ell$ centralizes $\bar{\mathbf{S}}_\ell$ implies $\bar{\mathbf{B}}_\ell\subset \bar{\mathbf{N}}_\ell$. Denote the restriction $t_\ell|_{\bar{\mathbf{B}}_\ell}$ by $s_\ell$. Since $\bar{\mathbf{N}}_\ell$ acts on $\bar{W}_\ell$, we have 
\begin{equation*}
s_\ell:\bar{\mathbf{B}}_\ell\rightarrow \mathrm{GL}_{W_\ell}.
\end{equation*}
We obtain $(s_\ell^{-1}(\bar{\mathbf{I}}_\ell))^\circ=\bar{\mathbf{C}}_\ell^\circ$ because $\mathrm{Ker}(s_\ell)$ is discrete. Consider the construction of $U_\ell$ from Theorem \ref{241}. This implies the exponents of characters of $s_\ell$ on $\bar{\mathbf{D}}_i\cong\prod_1^{m_i}\bar{\mathbb{F}}_\ell^*$ are between $0$ and $c_3$ for all $i$. By Theorem \ref{242}(i) and above, the diagonalizable groups $\{s_\ell^{-1}(\bar{\mathbf{I}}_\ell)\}_{\ell\gg1}$ satisfies the bounded exponents condition in Definition \ref{bfc}'. Hence, $\{\bar{\mathbf{C}}_\ell^\circ=(s_\ell^{-1}(\bar{\mathbf{I}}_\ell))^\circ\hookrightarrow \bar{\mathbf{B}}_\ell\hookrightarrow\GL_{V_\ell}\}_{\ell\gg1}$ have bounded formal characters. Since $\{\bar{\mathbf{C}}_\ell\hookrightarrow\GL_{N,\F_\ell}\}_{\ell\gg1}$ and $\{\bar{\mathbf{S}}_\ell\hookrightarrow\GL_{N,\F_\ell}\}_{\ell\gg1}$ both have bounded formal characters and $\bar{\mathbf{C}}_\ell^\circ$ commutes with $\bar{\mathbf{S}}_\ell$ for $\ell\gg1$, $\{\bar{\mathbf{G}}_\ell=\bar{\mathbf{C}}_\ell^\circ\cdot\bar{\mathbf{S}}_\ell\hookrightarrow\GL_{N,\F_\ell}\}_{\ell\gg1}$ have bounded formal characters by Proposition \ref{p204}. This prove Theorem \ref{redenv}(iii).\end{proof}

\section{$\ell$-independence of $\bar{\Gamma}_\ell$}
 
\subsection{Formal character of $\bar{\mathbf{G}}_\ell\subset\GL_{N,\F_\ell}$}
A system of algebraic envelopes $\{\bar{\mathbf{G}}_\ell\}_{\ell\gg1}$ of $\{\bar{\Gamma}_\ell\}_{\ell\gg1}$ (Definition \ref{env}) are constructed in $\mathsection2.5$. Let $\mathbf{G}_\ell$ be the algebraic monodromy group of $\Phi_\ell^{\ss}$ for all $\ell$. The compatibility (Definition \ref{comsys}) of the system $\{\phi_\ell\}$ implies that the formal characters of $\{\bar{\mathbf{G}}_\ell\hookrightarrow \GL_{N,\F_\ell}\}_{\ell\gg1}\cup \{\mathbf{G}_\ell\hookrightarrow \GL_{N,\Q_\ell}\}_{\ell\gg1}$ are the same in the sense of Definition \ref{fc}':

\begin{theorem}\label{311} Let $\{\bar{\mathbf{G}}_\ell\}_{\ell\gg1}$ be a system of algebraic envelopes of $\{\bar{\Gamma}_\ell\}_{\ell\gg1}$ (Definition \ref{env}).
\begin{enumerate}
\item [(i)] The formal characters of $\bar{\mathbf{G}}_\ell\hookrightarrow \mathrm{GL}_{N,\mathbb{F}_\ell}$ and $\mathbf{G}_\ell\hookrightarrow \GL_{N,\Q_\ell}$ are the same for $\ell\gg1$.
\item [(ii)] The formal characters of $\{\bar{\mathbf{G}}_\ell\hookrightarrow \mathrm{GL}_{N,\mathbb{F}_\ell}\}_{\ell\gg1}$ are the same.
\end{enumerate}\end{theorem}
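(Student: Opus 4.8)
The plan is to recover the formal character from a finite list of ``bounded'' invariant-theoretic quantities and then to transfer that list from the $\ell$-adic group $\mathbf{G}_\ell\hookrightarrow\GL_{N,\Q_\ell}$ to the mod $\ell$ group $\bar{\mathbf{G}}_\ell\hookrightarrow\GL_{N,\F_\ell}$ via the compatibility of Frobenius characteristic polynomials. We will use that $\mathbf{G}_\ell$ is reductive and is the Zariski closure of $\Phi_\ell^{\ss}(\mathrm{Gal}_K)$, and that $\bar{\mathbf{G}}_\ell$ is connected reductive, acts semisimply on $\bar V_\ell$, has formal characters bounded by one constant, and contains $\bar\gamma_\ell=\phi_\ell(\mathrm{Gal}_L)$ with $[\bar{\mathbf{G}}_\ell(\F_\ell):\bar\gamma_\ell]\le 2^{N-1}c_8$ and $[\bar\Gamma_\ell:\bar\gamma_\ell]\le[L:K]$ (Theorem \ref{redenv}). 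The first ingredient is the reconstruction step underlying \S3 of \cite{Hui}: for a family of connected reductive groups acting semisimply on $N$-dimensional spaces with formal characters bounded by a common constant $C$, the formal character --- equivalently, by Proposition \ref{p201}, the associated subtorus $\mathbf D\subset\mathbb G_m^N$ up to $\mathrm{Perm}(N)$, rank included --- is determined by finitely many integers attached to the tensor constructions $V^{\otimes a}\otimes(V^\vee)^{\otimes b}$ with $a+b\le B=B(N,C)$, namely the multiplicities of the trivial representation. Boundedness of the formal characters forces the lattice of relations among the weights, hence the defining equations of $\mathbf D$ in $\mathbb G_m^N$, to be generated in degree $\le B$ (compare Proposition \ref{p203}), so this finite list suffices; and each such multiplicity is the dimension of the subspace cut out by the linear conditions ``$g$ fixes $x$'', hence is unchanged if the acting group is replaced by a Zariski-dense subgroup.

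The second ingredient is the evaluation of this list from Frobenius data. On the $\ell$-adic side, by Chebotarev density and Noetherianity of the Zariski topology --- here is where \cite{Hui} is invoked to make the choice uniform in $\ell$ --- a fixed finite set of unramified places $v_1,\dots,v_m\notin S$ has the property that $\langle\Phi_\ell^{\ss}(\mathrm{Frob}_{\bar v_1}),\dots,\Phi_\ell^{\ss}(\mathrm{Frob}_{\bar v_m})\rangle$ is Zariski dense in $\mathbf{G}_\ell$ for all $\ell\gg1$, so the multiplicities above are computed from the matrices $\Phi_\ell^{\ss}(\mathrm{Frob}_{\bar v_j})$, i.e.\ from the fixed polynomials $P_{v_1}(x),\dots,P_{v_m}(x)\in\Q[x]$. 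On the mod $\ell$ side the same places give elements $\phi_\ell(\mathrm{Frob}_{\bar v_j})\in\bar\Gamma_\ell$ with characteristic polynomials $P_{v_j}(x)\bmod\ell$; since $\bar{\mathbf{G}}_\ell$ acts semisimply and has bounded formal characters, a weight count shows that for $\ell\gg1$ the $\bar{\mathbf{G}}_\ell(\F_\ell)$-invariants and the $\bar{\mathbf{G}}_\ell$-invariants of any bounded tensor construction agree ($\bar{\mathbf{S}}_\ell(\F_\ell)$ is Zariski dense in the semisimple part, while the central torus contributes no new invariant once $\ell-1$ exceeds the common exponent bound), and enlarging $B$ to absorb the bounded indices $[\bar{\mathbf{G}}_\ell(\F_\ell):\bar\gamma_\ell]$ and $[\bar\Gamma_\ell:\bar\gamma_\ell]$ lets one compute the $\bar{\mathbf{G}}_\ell$-multiplicities from $\phi_\ell(\mathrm{Frob}_{\bar v_j})$, i.e.\ from $P_{v_j}(x)\bmod\ell$, as well.

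It remains to match the two computations, and to deduce (ii). Each multiplicity in question is the nullity of an integral matrix built from monomials of degree $\le B$ in the roots of a single fixed polynomial $P_{v_j}(x)$, evaluated in $\Q_\ell$ on one side and in $\F_\ell$ on the other; only finitely many such monomials occur, and each nonzero one, being a fixed nonzero algebraic number, reduces to $0$ (or to $1$) modulo only finitely many primes, so the two nullities coincide for $\ell\gg1$. Hence $\dim M^{\bar{\mathbf{G}}_\ell}=\dim M^{\mathbf{G}_\ell}$ for every bounded tensor construction $M$, which gives part (i); and part (ii) follows at once, since the $\ell$-adic multiplicities depend only on the fixed $P_{v_j}(x)$ and are therefore independent of $\ell$ for $\ell\gg1$ (this $\ell$-independence of the formal character of $\mathbf{G}_\ell$ is in any case among the results of \cite{Hui}). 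The main obstacle is exactly the uniformity in $\ell$: producing one finite set of places whose Frobenii generate $\mathbf{G}_\ell$ for \emph{every} large $\ell$, and using the bounded-formal-character conclusion of Theorem \ref{redenv}(iii) to guarantee that passing to the finite group $\bar{\mathbf{G}}_\ell(\F_\ell)$ and reducing Frobenius eigenvalues modulo $\ell$ neither creates nor destroys invariants in the bounded range --- these are the steps where Nori--Serre theory and the machinery of \S3 of \cite{Hui} carry the weight.
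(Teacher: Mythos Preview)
Your approach is genuinely different from the paper's and, as written, has a real gap at the matching step.

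The central difficulty is that the invariant dimensions $\dim M^{\mathbf G_\ell}$ and $\dim M^{\bar{\mathbf G}_\ell}$ cannot be read off from the characteristic polynomials $P_{v_j}(x)$ alone. The space $M^{\langle g_1,\dots,g_m\rangle}=\bigcap_j\ker(g_j-1\mid M)$ depends on how the eigenspaces of the various $g_j$ sit relative to one another, not merely on their individual spectra; so there is no ``integral matrix built from monomials in the roots of $P_{v_j}$'' whose nullity computes it. Even granting a fixed finite list $v_1,\dots,v_m$ with $\langle\Phi_\ell^{\ss}(\mathrm{Frob}_{\bar v_j})\rangle$ Zariski dense in $\mathbf G_\ell$ for every large $\ell$ --- itself a uniformity statement that is not supplied by \cite{Hui} --- you would still need to compare the actual Frobenius matrices across $\Q_\ell$ and $\F_\ell$, and reduction mod $\ell$ does not commute with semi\-simplification or with taking invariants. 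On the mod $\ell$ side you face the further problem that $\bar\gamma_\ell$ is only a bounded-index subgroup of $\bar{\mathbf G}_\ell(\F_\ell)$, so $\dim M^{\bar\gamma_\ell}\ge\dim M^{\bar{\mathbf G}_\ell}$ with no mechanism to force equality; ``enlarging $B$'' does not repair this. There is also a slip in the reconstruction step: the observation that the lattice of weight relations is generated in bounded degree shows that $\dim M^{\mathbf T}$ for bounded $M$ determines the torus $\mathbf T$, but you then silently switch to $\dim M^{\mathbf G}$, a different quantity.

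The paper sidesteps all of this by working directly with the torus rather than with group invariants. First it bounds $\dim\bar{\mathbf T}_\ell\le r$ by a point count: the map $\textit{Char}\colon\GL_N\to\mathbb G_a^{N-1}\times\mathbb G_m$ sends $\bar\gamma_\ell$ into the $\F_\ell$-points of a fixed $\Q$-variety of dimension $r$, and Lang--Weil together with a lower bound for $|\bar{\mathbf T}_\ell(\F_\ell)\cap\bar\gamma_\ell|$ gives the inequality. For the reverse inclusion, bounded formal characters let one pass to an infinite set $\mathscr L$ of primes sharing a common set $R$ of defining characters for $\bar{\mathbf T}_\ell\subset\mathbb G_m^N$; compatibility then says that for every unramified $v$ the roots of $P_v(x)$ satisfy each relation in $R$ modulo $\ell$ for all $\ell\in\mathscr L$, hence exactly, so the maximal torus of $\mathbf G_\ell$ lies (up to $\mathrm{Perm}(N)$) in the common kernel of $R$. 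Equality of formal characters follows. The key point is that the paper only ever compares \emph{multiplicative relations among eigenvalues of a single Frobenius at a time} --- precisely the data the characteristic polynomials control --- and the ``infinitely many primes'' trick replaces any need for a uniform finite generating set.
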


\begin{proof}  The mod $\ell$ system $\{\phi_\ell:\mathrm{Gal}_K\rightarrow \mathrm{GL}_N(\mathbb{F}_\ell)\}$ comes from the $\ell$-adic system $\{ \Phi_\ell^{\ss}:\mathrm{Gal}_K\rightarrow \mathrm{GL}_N(\Q_\ell)\}$ (Definition \ref{arise}). The algebraic monodromy group $\mathbf{G}_\ell$ is reductive for all $\ell$. By taking a finite extension $K^{\mathrm{conn}}$ of $K$ \cite{SL1}, we may assume $\mathbf{G}_\ell$  is connected  for all $\ell$. This does not change the formal character of $\mathbf{G}_\ell\hookrightarrow\GL_{N,\Q_\ell}$. It is well known that these algebraic monodromy groups have same reductive rank $r$.  Define
\begin{equation*}
\textit{Char}:\mathrm{GL}_N\to \mathbb{G}_a^{N-1}\times\mathbb{G}_m
\end{equation*}
that maps a matrix to the coefficients of its characteristic polynomial. We know that $\textit{Char}(\mathbf{G}_\ell)$  is a $\mathbb{Q}$-variety of dimension $r$ that is independent of $\ell$ (by the compatibility conditions) and can be defined over $\Z[\frac{1}{N'}]$ for some positive integer $N'$ that is sufficiently divisible. Let  $\mathbf{P}_{\mathbb{Z}[\frac{1}{N'}]}$ be the Zariski closure of $\textit{Char}(\mathbf{G}_\ell)$ in  the projective $\mathbb{P}^N_{\mathbb{Z}[\frac{1}{N'}]}$. Since $\phi_\ell$ is continuous, every element of $\bar{\Gamma}_\ell$ is the image of a Frobenius element. Therefore, $\textit{Char}(\bar{\Gamma}_\ell)$ is a subset of the $\mathbb{F}_\ell$-rational points of $\mathbf{P}_{\mathbb{F}_\ell}:=\mathbf{P}_{\mathbb{Z}[\frac{1}{N'}]}\times_\Z \F_\ell$ for $\ell\gg1$.

 Generic flatness \cite[Theorem 6.9.1]{EGA} implies $\mathbf{P}_{\mathbb{Z}[\frac{1}{N'}]}$ is flat over $\mathbb{Z}[\frac{1}{N'}]$ for sufficiently divisible $N'$, so the dimension of every irreducible component of $\mathbf{P}_{\mathbb{Z}[\frac{1}{N'}]}$ is $r+1$ \cite[Chapter 3 Proposition 9.5]{Ha} and hence the dimension of every irreducible component of $\mathbf{P}_{\mathbb{F}_\ell}$ is $r$ \cite[Chapter 3 Corollary 9.6]{Ha} for $\ell\gg1$. Also, the Hilbert polynomial of $\mathbf{P}_{\mathbb{F}_\ell}$ and  in particular the \emph{degree} (let it be $d$) of $\mathbf{P}_{\mathbb{F}_\ell}\subset\mathbb{P}^N_{\mathbb{F}_\ell}$ is independent of $\ell$ for $\ell\gg1$ \cite[Chapter 3 Theorem 9.9]{Ha}.  Since $d$ is a positive integer, we conclude that the number and degrees of irreducible components of $\mathbf{P}_{\mathbb{F}_\ell}$ are bounded by $d$ \cite[Chapter 1 Proposition 7.6(a),(b)]{Ha}. By \cite[Theorem 1]{LW} and above, we have
 \begin{equation*}
 |\mathbf{P}_{\mathbb{F}_\ell}(\mathbb{F}_\ell)|\leq 3d\cdot\ell^r
 \end{equation*}
for $\ell\gg1$. Let $\bar{\mathbf{T}}_\ell$ be a $\mathbb{F}_\ell$-maximal torus of $\bar{\mathbf{G}}_\ell$. \cite[Lemma 3.5]{Nori} implies $\bar{\mathbf{T}}_\ell$ has at least $(\ell-1)^{\mathrm{dim}(\bar{\mathbf{T}}_\ell)}$ $\F_\ell$-rational points. By Theorem \ref{redenv} (i), there is an integer $n>0$ such that the $n$th power of $\bar{\mathbf{T}}_\ell(\mathbb{F}_\ell)$ is contained in $\bar{\gamma}_\ell$ for $\ell\gg1$. One sees by diagonalizing $\bar{\mathbf{T}}_\ell$ in $\GL_{N,\bar{F}_\ell}$ that the order of the kernel of this $n$th power homomorphism is less than or equal to $n^N$. Hence, we obtain
\begin{equation*}
|\bar{\mathbf{T}}_\ell(\mathbb{F}_\ell)\cap\bar{\gamma}_\ell|\geq \frac{(\ell-1)^{\mathrm{dim}(\bar{\mathbf{T}}_\ell)}}{n^N}.
\end{equation*}
Also, morphism $\textit{Char}$ restricted to any maximal torus of $\mathrm{GL}_N$  is finite morphism of degree $N!$. Therefore, there is a constant $c>0$ such that 
\begin{equation*}
c\cdot\ell^{\mathrm{dim}(\bar{\mathbf{T}}_\ell)}\leq |\textit{Char}(\bar{\mathbf{T}}_\ell(\mathbb{F}_\ell)\cap\bar{\gamma}_\ell)|\leq |\textit{Char}(\bar{\gamma}_\ell)|\leq  |\mathbf{P}_{\mathbb{F}_\ell}(\mathbb{F}_\ell)|\leq 3d\cdot\ell^r
\end{equation*}
for $\ell\gg1$. This implies $\mathrm{dim}(\bar{\mathbf{T}}_\ell)\leq r$  for  $\ell\gg1$.

On the other hand, we find for each $\ell\gg1$ a set  $R_\ell$ of characters of $\mathbb{G}_m^N$ of exponents bounded by $C>0$ such that $\bar{\mathbf{T}}_\ell$ is conjugate  in $\GL_{N,\bar{F}_\ell}$ to the kernel of $R_\ell$ by Theorem \ref{redenv}(iii) and Definition \ref{bfc}'.
Let $\mathscr{L}$ be an infinite subset of prime numbers $\mathscr{P}$ such that for all $\ell,\ell'\in\mathscr{L}$, we have equality $R_\ell=R_{\ell'}$. Denote this common set of characters by $R$ and define $\mathbf{Y}_\C=\{y\in\mathbb{G}_{m,\mathbb{C}}^N:\chi(y)=1\hspace{.1in}\forall \chi\in R\}$ so that $\mathrm{dim}_\C\mathbf{Y}_\C=\mathrm{dim}_{\bar{\F}_\ell}\bar{\mathbf{T}}_\ell$ for all $\ell\in\mathscr{L}$. If $\bar{v}$ divides $v\in \Sigma_K\backslash S_\ell$ ($S_\ell$ in Definition \ref{comsys}), then the characteristic polynomial of $\phi_\ell(\mathrm{Frob}_{\bar{v}})$ is just the  mod $\ell$ reduction of the characteristic polynomial of $\Phi_\ell^{\ss}(\mathrm{Frob}_{\bar{v}})=P_v(x)\in\mathbb{Q}[x]$ which depends only on $v$ (Definition \ref{comsys}). Therefore, for each $v\notin S$ (Definition \ref{comsys}), we can put the roots of $P_v(x)$ in some order $\alpha_1,\alpha_2,...,\alpha_N$  such that the following  congruence equation holds:
\begin{equation*}
\alpha_1^{m_1}\alpha_2^{m_2}\cdots\alpha_N^{m_N}\equiv 1\hspace{0.5in}(\mathrm{mod}\hspace{.1in}\ell')
\end{equation*}
for any character $x_1^{m_1}x_2^{m_2}\cdots x_N^{m_N}\in R$ and 
$$\ell'\in \mathscr{L}_v:=\mathscr{L}\backslash \{\ell''\in\mathscr{P}: \exists v'\in S_\ell~\mathrm{s.t.}~v'|\ell''\}$$
 if $v|\ell$. Since $\alpha_1^{m_1}\alpha_2^{m_2}\cdots\alpha_N^{m_N}$ is an algebraic number and $\mathscr{L}_v$ consists of infinitely many primes, we obtain equality
\begin{equation*}
\alpha_1^{m_1}\alpha_2^{m_2}\cdots\alpha_N^{m_N}=1
\end{equation*}
for any character $x_1^{m_1}x_2^{m_2}\cdots x_N^{m_N}\in R$. Therefore, 
\begin{equation*}
(\textit{Char}|_{\mathbb{G}_m^N})^{-1}(\{P_v(x): v\in\Sigma_K\backslash S\})\subset\bigcup_{g\in \mathrm{Perm}(N)} g(\mathbf{Y}_\C),
\end{equation*}
where  $\mathrm{Perm}(N)$ is the group of permutations of $N$ letters permuting the coordinates. Since $\{P_v(x):v\in\Sigma_K\backslash S\}$ is Zariski dense in $\textit{Char}(\mathbf{G}_\ell)$ of dimension $r$ and $\textit{Char}|_{\mathbb{G}_m^N}$ is a finite morphism of degree $N!$, the Zariski closure of $(\textit{Char}|_{\mathbb{G}_m^N})^{-1}(\{P_v(x):v\in\Sigma_K\backslash S\})$ in $\mathbb{G}_{m,\mathbb{C}}^N$ denoted by $\mathbf{D}_\C$ is also of dimension $r$. Since we have obtained $\mathrm{dim}(\bar{\mathbf{T}}_\ell)\leq r$ at the end of the second paragraph and any maximal torus of the algebraic monodromy group $\mathbf{G}_\ell$ is conjugate in $\GL_{N,\C}$ to an irreducible component of $\mathbf{D}_\C$ \cite{SL1}, the inclusion 
$$\mathbf{D}_\C\subset \bigcup_{g\in \mathrm{Perm}(N)} g(\mathbf{Y}_\C) $$
implies the formal characters of $\bar{\mathbf{G}}_\ell\hookrightarrow\GL_{N,\F_\ell}$ and $\mathbf{G}_\ell\hookrightarrow\GL_{N,\Q_\ell}$ are the same in the sense of Definition \ref{fc}' for all $\ell\in\mathscr{L}$. There are only finitely many possibilities for $R_\ell$ by 
Remark \ref{finitefc} and Proposition \ref{p203}. By excluding the primes $\ell$ such that $R_\ell$ appears finitely many times, we conclude that the formal characters of $\bar{\mathbf{G}}_\ell\hookrightarrow\GL_{N,\F_\ell}$ and $\mathbf{G}_\ell\hookrightarrow\GL_{N,\Q_\ell}$ are the same for $\ell\gg1$. This proves (i) and hence (ii) since formal character of $\mathbf{G}_\ell\hookrightarrow \GL_{N,\Q_\ell}$ is independent of $\ell$ \cite{SL1}.
\end{proof}

\subsection{Formal character of $\bar{\mathbf{S}}_\ell\subset\GL_{N,\F_\ell}$} We make the following assumptions for this subsection.

\noindent\textbf{Assumptions}: By taking a field extension of $K$, we may assume 
\begin{enumerate}
\item[(i)] $\mathbf{G}_\ell$, the algebraic monodromy group of $\Phi_\ell^{\ss}$ is connected for all $\ell$ (see \cite{SL1}),
\item[(ii)] $\bar{\Omega}_\ell:=\mu_\ell(\bar{\Gamma}_\ell)$ corresponds to an abelian extension of $K$ that is unramified at all primes not dividing $\ell$ for all $\ell$ (see the first paragraph of the proof of Theorem \ref{242}).\\
\end{enumerate}

Theorem \ref{321} below is the main result in this subsection. Denote a finite extension of $K$ by $K'$. Since $\bar{\mathbf{S}}_\ell$ is independent of $K'$ over $K$ for $\ell\gg1$ by Remark \ref{ind}, the assumptions above remain valid for $K'$, and $\{\bar{\mathbf{G}}_\ell\}_{\ell\gg1}$ constructed in $\mathsection2.5$ are still algebraic envelopes of $\{\phi_\ell(\mathrm{Gal}_{K'})\}_{\ell\gg1}$, we are free to replace $K$ by $K'$ in this subsection.

\begin{theorem}\label{321} Let $\bar{\mathbf{S}}_\ell\subset\GL_{N,\F_\ell}$ be the semisimple envelope of $\bar{\Gamma}_\ell$ (Definition \ref{Nori})  for all $\ell\gg1$. 
\begin{enumerate}
\item[(i)] The formal character of $\bar{\mathbf{S}}_\ell\hookrightarrow\mathrm{GL}_{N,\mathbb F_\ell}$ is equal to the formal character of $\mathbf{G}_\ell^\mathrm{der}\hookrightarrow\mathrm{GL}_{N,\Q_\ell}$ for $\ell\gg1$, where $\mathbf{G}_\ell^\mathrm{der}$ is the derived group of the algebraic monodromy group $\mathbf{G}_\ell$ of $\Phi_\ell^{\ss}$.
\item[(ii)] The formal character of $\bar{\mathbf{S}}_\ell\hookrightarrow \mathrm{GL}_{N,\mathbb{F}_\ell}$ is independent of $\ell$   if $\ell\gg1$.
\end{enumerate}\end{theorem}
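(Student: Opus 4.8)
Granting (i), part (ii) is immediate: the formal character of $\bar{\mathbf{S}}_\ell\hookrightarrow\GL_{N,\F_\ell}$ then equals that of $\mathbf{G}_\ell^{\mathrm{der}}\hookrightarrow\GL_{N,\Q_\ell}$, and the latter is independent of $\ell$ by the $\ell$-adic theory of the compatible system $\{\Phi_\ell^{\ss}\}$ (\cite{SL1},\cite{Hui}). So the plan is to prove (i), i.e. that $(\bar{\mathbf{S}}_\ell,\bar V_\ell)$ and $(\mathbf{G}_\ell^{\mathrm{der}},V_\ell)$ carry the same multiset of torus weights for $\ell\gg1$. First I would invoke Theorem \ref{311}: for $\ell\gg1$ the formal characters of $\bar{\mathbf{G}}_\ell\hookrightarrow\GL_{N,\F_\ell}$ and $\mathbf{G}_\ell\hookrightarrow\GL_{N,\Q_\ell}$ agree and are all equal. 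Fix this common formal character; by Proposition \ref{p201} it is realised by a subtorus $\mathbf{T}\subset\mathbb{G}_m^N$, unique up to $\mathrm{Perm}(N)$, and after conjugating over $\bar{\F}_\ell$, resp. $\bar{\Q}_\ell$, we may assume that $\bar{\mathbf{G}}_\ell$ and $\mathbf{G}_\ell$ both have maximal torus $\mathbf{T}$, with the same weight multiset on the ambient space.

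Next, set $\bar{\mathbf{T}}_\ell^{\mathrm{der}}:=\mathbf{T}\cap\bar{\mathbf{S}}_\ell$, a maximal torus of $\bar{\mathbf{S}}_\ell=\bar{\mathbf{G}}_\ell^{\mathrm{der}}$, and likewise $\mathbf{T}_\ell^{\mathrm{der}}:=\mathbf{T}\cap\mathbf{G}_\ell^{\mathrm{der}}$. The weight multiset of $\bar{\mathbf{S}}_\ell$ on $\bar V_\ell$ is the image of the weight multiset of $\mathbf{T}$ under the restriction $X^*(\mathbf{T})\to X^*(\bar{\mathbf{T}}_\ell^{\mathrm{der}})$, and the analogous statement holds on the $\ell$-adic side; hence it suffices to show that these two restriction maps coincide, equivalently that their kernels --- the character groups $X^*(\bar{\mathbf{G}}_\ell/\bar{\mathbf{S}}_\ell)$ and $X^*(\mathbf{G}_\ell/\mathbf{G}_\ell^{\mathrm{der}})$, viewed as sublattices of $X^*(\mathbf{T})$ --- agree up to finite index (the discrepancy being harmless for $\ell\gg1$). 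To pin down these ``abelian directions'' I would decompose $\bar V_\ell=\bigoplus_i\bar{M}_i^{\oplus m_i}$ into $\bar{\mathbf{S}}_\ell$-isotypic pieces, exactly as in the proof of Theorem \ref{redenv}(iii), and $V_\ell=\bigoplus_j M_j^{\oplus n_j}$ into $\mathbf{G}_\ell^{\mathrm{der}}$-isotypic pieces; each isotypic piece is stable under the ambient reductive group, which therefore acts on its determinant through a character $\bar\delta_i$, resp. $\delta_j$, whose restriction to the derived group is torsion, so that the $\bar\delta_i$ rationally span $X^*(\bar{\mathbf{G}}_\ell/\bar{\mathbf{S}}_\ell)\otimes\Q$ and the $\delta_j$ rationally span $X^*(\mathbf{G}_\ell/\mathbf{G}_\ell^{\mathrm{der}})\otimes\Q$.

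It then remains to match the two abelian directions inside $X^*(\mathbf{T})$, and this is where the compatibility of the system and the rigidity of its abelian part come in, following the techniques of \cite[$\mathsection3$]{Hui}. As $\ell$ varies, the $\ell$-adic characters $g\mapsto\delta_j(\Phi_\ell^{\ss}(g))$ form a strictly compatible system of abelian one-dimensional representations of $\mathrm{Gal}_K$; by the theory of abelian $\ell$-adic representations \cite{Sbk} they are cut out by a fixed $\Q$-group and are $\ell$-independent, and their mod $\ell$ reductions are, up to finite ambiguity, the $\bar\delta_i$, which moreover have bounded exponents at the primes above $\ell$ by the analysis of tame inertia in $\mathsection2.3$--$\mathsection2.4$ (and $\bar{\mathbf{C}}_\ell^\circ$ has bounded formal character, Theorem \ref{redenv}(iii)). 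Comparing Frobenius characteristic polynomials of $\phi_\ell$ and $\Phi_\ell^{\ss}$, exactly as in the proof of Theorem \ref{311}, forces $X^*(\bar{\mathbf{G}}_\ell/\bar{\mathbf{S}}_\ell)$ and $X^*(\mathbf{G}_\ell/\mathbf{G}_\ell^{\mathrm{der}})$ to define the same subtorus of $\mathbf{T}$ for $\ell\gg1$; subtracting off the corresponding central characters from the $\mathbf{T}$-weights on each isotypic piece and comparing multisets then yields (i). The main obstacle is precisely this last step: a formal character does not by itself determine the formal character of the derived group, so one genuinely has to exploit the compatibility of the system (via Theorem \ref{311}) together with Serre's rigidity of the abelian quotient, and one must control the finite-index and isogeny discrepancies between $\bar{\mathbf{C}}_\ell^\circ\times\bar{\mathbf{S}}_\ell\to\bar{\mathbf{G}}_\ell$ and its $\ell$-adic counterpart --- these are bounded for $\ell\gg1$ but require care.
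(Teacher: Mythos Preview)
Your reduction of (ii) to (i) via \cite{Hui} is exactly what the paper does. Your diagnosis of the core problem is also right: knowing that $\bar{\mathbf{G}}_\ell$ and $\mathbf{G}_\ell$ share a formal character (Theorem \ref{311}) is not enough, and one must separately pin down the ``abelian direction'' inside the common maximal torus $\mathbf{T}$.

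There is, however, a genuine gap in your mechanism for isolating the abelian direction. You assert that the determinant characters $\bar\delta_i$ of the $\bar{\mathbf{S}}_\ell$-isotypic pieces rationally span $X^*(\bar{\mathbf{G}}_\ell/\bar{\mathbf{S}}_\ell)$. This is false in general: take $G=\mathbb{G}_m\times\mathrm{SL}_2$ acting on $V=F^2\oplus F^2$ with $\mathbb{G}_m$ of weights $+1,-1$ on the two copies and $\mathrm{SL}_2$ standard on each. Then $V$ is a single $G^{\mathrm{der}}$-isotypic block, its determinant character is trivial, yet $X^*(G^{\mathrm{ab}})=\Z$. So the $\bar\delta_i$ can miss the abelian part entirely. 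A second, related gap is the claim that the mod $\ell$ reductions of the $\delta_j$ are ``up to finite ambiguity'' the $\bar\delta_i$: this presupposes a correspondence between the two isotypic decompositions which is essentially what you are trying to prove.

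The paper avoids both problems by \emph{externalising} the abelian part rather than reading it off $V$. One fixes once and for all a $\Q$-morphism $\Psi:\mathbf{S}_{\mathfrak{m}_0}\to\GL_{n,\Q}$ of the Serre group with finite kernel, giving a compatible abelian system $\{\Psi_\ell\}$, and works with the enlarged system $\Phi_\ell^{\ss}\times\Psi_\ell$ and its mod $\ell$ reduction $\phi_\ell\times\psi_\ell$ inside $\GL_N\times\GL_n$. The tame-inertia bounds of $\mathsection2.3$--$2.4$ show (Propositions \ref{323}, \ref{324}) that every character of $\mu_\ell$ on $U_{\mathfrak{m}_0}$ is the mod $\ell$ reduction of a character of $\mathbf{S}_{\mathfrak{m}_0}$; since $\Psi$ has finite kernel, this forces the projection $p_2$ to the $\GL_n$ factor to be an \emph{isogeny} on the enlarged inertia torus $\bar{\mathbf{I}}_\ell'$ (Proposition \ref{326}). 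One then reruns the Theorem \ref{311} argument verbatim for the enlarged system, now using $\textit{Char}_1\times\textit{Char}_2$ and the symmetry group $\mathrm{Perm}(N)\times\mathrm{Perm}(n)$, to match the maximal tori $\mathbf{T}_\ell'$ and $\bar{\mathbf{T}}_\ell'$ inside $\mathbb{G}_m^{N+n}$. Taking $(\ker p_2)^\circ$ on both sides then gives matching subtori of $\mathbb{G}_m^N$: on the $\ell$-adic side this is a maximal torus of $\mathbf{G}_\ell^{\mathrm{der}}$ by \cite[proof of Theorem 3.19]{Hui}, and on the mod $\ell$ side it is a maximal torus of $\bar{\mathbf{S}}_\ell'=\bar{\mathbf{S}}_\ell\times\{1\}$ precisely because $p_2|_{\bar{\mathbf{I}}_\ell'}$ is an isogeny. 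The point is that the Serre group furnishes an $\ell$-independent receptacle for the abelian quotient, so that ``kernel of $p_2$'' has the same meaning on both sides; your internal determinant characters do not.
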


In \cite[$\mathsection 3$]{Hui}, we used mainly abelian $\ell$-adic representations to prove that the formal character of $\mathbf{G}_\ell^\mathrm{der}\hookrightarrow\GL_{N,\Q_\ell}$ is independent of $\ell$. To prove Theorem \ref{321}, we adopt this strategy in a mod $\ell$ fashion. The key point is to prove that the inertia characters of $\mu_\ell$ (Definition \ref{mu}) for $\ell\gg1$ are in some sense the  mod $\ell$ reduction of inertia characters of some Serre group $\mathbf{S}_{\mathfrak{m}}$ \cite[Chapter 2]{Sbk} (Proposition \ref{324}).

\begin{definition}\label{Serre}
For each prime $\ell\in\mathscr{P}$, choose a valuation $\bar{v}_\ell$ of $\bar{\Q}$ that extends the $\ell$-adic valuation of $\Q$. 
This valuation on $\bar\Q$ is equal to the restriction of 
the unique non-Archimedean valuation on $\bar\Q_\ell$ (extending the $\ell$-adic valuation on $\Q_\ell$) 
to $\bar\Q$ with respect to some embedding $\bar\Q\hookrightarrow\bar\Q_\ell$.
Denote also this valuation on $\bar\Q_\ell$ by $\bar{v}_\ell$. 
Define the following notation.
\begin{itemize}
\item[\textbullet] $\mathrm{Gal}_K^{\mathrm{ab}}$: the Galois group of the maximal abelian extension of $K$,
\item[\textbullet] $I_K$: the group of id\'eles of $K$,
\item[\textbullet] $(x_v)_{v\in\Sigma_K}$: a representation of a finite id\'ele,
\item[\textbullet] $K_v$: the completion of $K$ with respect to $v\in\Sigma_K$,
\item[\textbullet] $U_v$:  the unit group of $K_v^*$, 
\item[\textbullet] $k_v$: the residue field of $K_v$,
\item[\textbullet] $\mathfrak{m}_0$: the modulus of empty support,
\item[\textbullet] $U_{\mathfrak{m}_0}:=\prod_v U_v$,
\item[\textbullet] $K_\ell:=\prod_{v|\ell}K_v=K\otimes \Q_\ell$,
\item[\textbullet] $\bar{\Z}_\ell$: the valuation ring of $\bar{v}_\ell$,
\item[\textbullet] $\mathfrak{p}_\ell$: the maximal ideal of $\bar{v}_\ell$,
\item[\textbullet] $k_\ell$: the residue field of $\bar{v}_\ell$,
\item[\textbullet] $x_\ell:=(x_v)_{v|\ell}$.
\end{itemize}
Let $\sigma:K\rightarrow \bar{\Q}$ be an embedding of $K$ in $\bar{\Q}$. The composition of $\sigma$ with $\bar{\Q}\hookrightarrow \bar{\Q}_\ell$ extends to a $\Q_\ell$-algebra homomorphism $\sigma_\ell:K_\ell\rightarrow \bar{\Q}_\ell$. 
\end{definition}

\begin{remark}\label{sigma} The field $k_\ell$ is an algebraic closure of $\F_\ell$ and homomorphism $\sigma_\ell$ is trivial on the components $K_v$ of $K_\ell$ when $v$ is not equivalent to $\bar{v}_\ell\circ\sigma$.\\\end{remark}

Recall representation $\mu_\ell:\mathrm{Gal}_K\rightarrow \mathrm{GL}(W_\ell)$ (abelian by Assumption (ii)) from Definition \ref{mu}.  Thus, $\mu_\ell$ induces $\rho_\ell$ below for each $\ell$ by composing with $I_K\rightarrow \mathrm{Gal}_K^{\mathrm{ab}}$:
\begin{equation*}
\rho_\ell: I_K\rightarrow \mathrm{GL}(W_\ell).
\end{equation*}

\begin{proposition}\label{323} If $\chi_\ell: I_K\rightarrow \bar{\F}_\ell^*$ is a character of $\rho_\ell$ for $\ell\gg1$, then for all finite id\'ele $x\in U_{\mathfrak{m}_0}$  we have the congruence 
\begin{equation*}
\chi_\ell(x)\equiv \prod_{\sigma\in\mathrm{Hom}(K,\bar{\mathbb{Q}})}\sigma_\ell(x_\ell^{-1})^{m(\sigma,\ell)}\hspace{.3in}(\mathrm{mod}\hspace{.05in}\mathfrak{p}_\ell)
\end{equation*}
such that $0\leq m(\sigma,\ell)\leq c_6$.
\end{proposition}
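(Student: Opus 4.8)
The plan is to reduce the claimed global congruence to a local statement at each place $v\mid\ell$ of $K$, and there to match $\chi_\ell|_{U_v}$, via local class field theory and Theorem \ref{233}, against a product of residue‑field embeddings. I use throughout that $\bar\Omega_\ell$ has order prime to $\ell$ and, by Assumption (ii), is abelian and unramified outside $\ell$; thus $\mu_\ell$ and $\rho_\ell$ factor through the reciprocity map, each character $\chi_\ell$ of $\rho_\ell$ makes sense on $I_K$, and $\chi_\ell$ is tamely ramified above $\ell$.

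First I would check that both sides depend only on $x_\ell=(x_v)_{v\mid\ell}$ and factor as products over $v\mid\ell$. On the left: for $x\in U_{\mathfrak{m}_0}$ and $v\nmid\ell$ the local reciprocity map sends $x_v$ into the inertia at $v$, on which $\chi_\ell$ is trivial, so $\chi_\ell(x)=\prod_{v\mid\ell}\chi_\ell(x_v)$ with $U_v\hookrightarrow I_K$. On the right: by Remark \ref{sigma}, $\sigma_\ell$ kills every $K_w$-component of $x_\ell$ except the one at $v(\sigma):=\bar v_\ell\circ\sigma|_K$, which divides $\ell$, so $\sigma_\ell(x_\ell^{-1})=\sigma_\ell(x_{v(\sigma)}^{-1})$. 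It therefore suffices to fix $v\mid\ell$ and exhibit integers $0\le m(\sigma,\ell)\le c_6$, indexed by the $\sigma$ with $v(\sigma)=v$ (these correspond bijectively to the $[K_v:\Q_\ell]$ embeddings $K_v\hookrightarrow\bar\Q_\ell$), such that
\begin{equation*}
\chi_\ell(x_v)\equiv\prod_{v(\sigma)=v}\sigma_\ell(x_v^{-1})^{m(\sigma,\ell)}\pmod{\mathfrak{p}_\ell}\qquad(x_v\in U_v).
\end{equation*}
For $x_v\in U_v$ the reduction of $\sigma_\ell(x_v)$ modulo $\mathfrak{p}_\ell$ is $\psi_\sigma(\bar x_v)$, where $\bar x_v\in k_v^*$ is the residue of $x_v$ and $\psi_\sigma\colon k_v\hookrightarrow k_\ell=\bar\F_\ell$ the residue embedding induced by $\sigma_\ell$; the $f_v$ distinct residue embeddings of $k_v$ into $\bar\F_\ell$ are the Frobenius conjugates of a fixed $\psi_0$, each arising from $[K_v:\Q_\ell]/f_v$ of the $\sigma$.

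Next I would describe $\chi_\ell|_{U_v}$. By tameness it factors through the residue map $U_v\twoheadrightarrow k_v^*\cong\mathbb{F}_{\ell^{f_v}}^*$, and the resulting character coincides, through $\theta_{\bar v}$, with the character by which $I_{\bar v}^{\mathrm{t}}$ acts under $\chi_\ell$. By Theorem \ref{233} for $\phi_\ell$ and the construction of $W_\ell$ inside $U_\ell=\bigoplus_{i=1}^{c_3}(\otimes^i V_\ell)$, this character has an $\ell$-restricted expression in fundamental characters of level $c_4!$ with all exponents in $[0,c_6]$; since it already factors through $\mathbb{F}_{\ell^{f_v}}^*$ its level divides $f_v$, and by the norm‑compatibility of the $\theta_d$ across levels the level-$f_v$ $\ell$-restricted expression inherits the bound, say
\begin{equation*}
\chi_\ell|_{I_{\bar v}^{\mathrm{t}}}=\prod_{k=0}^{f_v-1}\bigl(\theta_{f_v}^{\ell^k}\bigr)^{p_k},\qquad 0\le p_k\le c_6.
\end{equation*}
Finally I would invoke the standard computation of Serre \cite{S72,Sbk}: under the reciprocity map the composite $U_v\to I_{\bar v}^{\mathrm{t}}\xrightarrow{\theta_{f_v}}\mathbb{F}_{\ell^{f_v}}^*\subset\bar\F_\ell^*$ is $x_v\mapsto\psi_0(\bar x_v^{-1})$ — the inverse reflecting the arithmetic‑Frobenius normalisation, which is exactly what produces the $x_\ell^{-1}$ in the statement — whence $\theta_{f_v}^{\ell^k}$ restricts to $x_v\mapsto\psi_k(\bar x_v^{-1})$ and $\chi_\ell(x_v)=\prod_{k=0}^{f_v-1}\psi_k(\bar x_v^{-1})^{p_k}$. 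Picking for each $k$ one $\sigma$ with $\psi_\sigma=\psi_k$, setting $m(\sigma,\ell)=p_k$, and putting $m(\sigma,\ell)=0$ for the remaining embeddings above $v$, gives $0\le m(\sigma,\ell)\le c_6$ and the displayed local congruence, hence the proposition. I expect the main obstacle to be bookkeeping rather than ideas: one has to pin down once and for all the normalisation of local class field theory (the sign) and the passage between fundamental characters of levels $c_4!$, $f_v$ and the residue embeddings, so that the single constant $c_6$ coming from Theorem \ref{233} survives unchanged; with the dictionary fixed, the congruence is formal.
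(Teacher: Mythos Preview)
Your proposal is correct and follows essentially the same route as the paper: reduce to $v\mid\ell$ via unramifiedness outside $\ell$, factor through $k_v^*$ by tameness, invoke \cite[Proposition 3]{S72} to identify the id\`ele-side and inertia-side maps (producing the inverse $x_\ell^{-1}$), and carry the exponent bound $c_6$ from level $c_4!$ down to level $f_v$. The paper's proof is terser, leaving the matching of embeddings $\sigma$ with residue embeddings $\psi_k$ and the level-descent implicit, but the ingredients and their logical order are the same.
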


\begin{proof}
Since $|\bar{\Omega}_\ell|$ is prime to $\ell$, the following homomorphism 
\begin{equation*}
U_v\hookrightarrow K_v^*\rightarrow I_K\stackrel{\rho_\ell}{\rightarrow} \mathrm{GL}(W_\ell)
\end{equation*}
factors through $\alpha_v: k_v^*\rightarrow \mathrm{GL}(W_\ell)$ for all $v|\ell$. On the other hand, let $\bar{v}\in\Sigma_{\bar{K}}$ divide $\ell$.  Since $\bar{\Omega}_\ell$ is abelian and of order prime to $\ell$, the restriction of $\mu_\ell:\mathrm{Gal}_K\rightarrow \GL(W_\ell)$ to $I_{\bar{v}}$ factors through 
\begin{equation*}
I_{\bar{v}}\rightarrow I_{\bar{v}}^{\mathrm{t}}\stackrel{\cong}{\rightarrow} \varprojlim \mathbb{F}_{\ell^k}^*\rightarrow k_v^*
\end{equation*}
and induces $\beta_v:  k_v^*\rightarrow \mathrm{GL}(W_\ell)$ that depends on $v=\bar{v}|_{\bar{K}}$. By \cite[Proposition 3]{S72}, $\alpha_v$ and $\beta_v$ are inverse of each other. Since $f_{\bar{v}}$ (Definition \ref{tori}) factors through $\beta_v$ and the exponents of any character of $f_{\bar{v}}$ when expressed as a $\ell$-restricted (Definition \ref{restrict}) product of fundamental characters of level $c_4!$ are bounded by $c_6$ for $\ell\gg1$ ($\mathsection2.4$), the exponents of $\chi_\ell$ when expressed as a $\ell$-restricted product of fundamental characters of level $[k_v:\F_\ell]$ are also bounded by $c_6$ for $\ell\gg1$. Since $\rho_\ell$ is unramified at all $v$ not dividing $\ell$, $\rho_\ell$ is trivial on subgroup $\prod_{v\nmid\ell}U_v$
of $ U_{\mathfrak{m}_0}:=\prod_{v}U_v$. Therefore, we conclude the congruence for $\ell\gg1$.\end{proof}

\begin{definition}\label{theta} Let $\mathbf{S}_\mathfrak{m}$ be the Serre group of $K$  with modulus $\mathfrak{m}$ \cite[Chapter 2]{Sbk} and $\Theta: \mathbf{S}_\mathfrak{m}\rightarrow \mathbb{G}_{m,\bar{\Q}_\ell}$ a character of $\mathbf{S}_\mathfrak{m}$ over $\bar{\Q}_\ell$. Since the image of the abelian representation $\Theta_\ell$ attached to $\Theta$ \cite[Chapter 2]{Sbk} 
\begin{equation*}
\Theta_\ell: \mathrm{Gal}_K^{\mathrm{ab}}\rightarrow \mathbf{S}_{\mathfrak{m}}(\Q_\ell)\stackrel{\Theta}{\rightarrow} \bar\Q_\ell^*
\end{equation*}
is contained in $\bar{\Z}_\ell^*$, define 
\begin{equation*}
\theta_\ell: I_K\rightarrow k_\ell^*\cong\bar{\F}_\ell^*
\end{equation*}
as the mod $\mathfrak{p}_\ell$ reduction of the composition of $I_K\rightarrow \mathrm{Gal}_K^{\mathrm{ab}}$ with $\Theta_\ell$.
\end{definition}

\begin{proposition}\label{324} Let $\chi_\ell$ be a character of $\rho_\ell$ as above. If $\ell$ is sufficiently large, then there is a character $\Theta$ of $\mathbf{S}_{\mathfrak{m}_0}$ such that 
\begin{equation*}
\chi_\ell(x)= \theta_\ell(x)
\end{equation*}
for all $x\in U_{\mathfrak{m}_0}$, where $\theta_\ell$ is defined in Definition \ref{theta}.
\end{proposition}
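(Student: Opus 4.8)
The plan is to lift the restriction $\chi_\ell|_{U_{\mathfrak{m}_0}}$ to a locally algebraic abelian $\ell$-adic character of $\mathrm{Gal}_K$ unramified outside $\ell$, and then to invoke Serre's theory of abelian $\ell$-adic representations \cite[Chapter 2]{Sbk}, which identifies such characters with the abelian representations $\Theta_\ell$ attached to the characters $\Theta$ of $\mathbf{S}_{\mathfrak{m}_0}$. Fix $\ell\gg1$ and a character $\chi_\ell$ of $\rho_\ell$; by Proposition \ref{323} there are integers $m(\sigma,\ell)\in[0,c_6]$ indexed by $\sigma\in\mathrm{Hom}(K,\bar\Q)$ with $\chi_\ell(x)\equiv\prod_{\sigma}\sigma_\ell(x_\ell^{-1})^{m(\sigma,\ell)}\ (\mathrm{mod}\ \mathfrak{p}_\ell)$ for all $x\in U_{\mathfrak{m}_0}$. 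Let $\psi_\ell\colon\mathrm{Gal}_K^{\mathrm{ab}}=I_K/K^*\to\bar\Q_\ell^*$ be the character that is trivial on $\prod_{v\nmid\ell}U_v$ and equal to $x\mapsto\prod_{\sigma}\sigma_\ell(x)^{-m(\sigma,\ell)}$ on $\prod_{v\mid\ell}U_v$; by Remark \ref{sigma} the latter is the restriction of an algebraic character of $\Res_{K/\Q}\mathbb{G}_m$, so once $\psi_\ell$ is shown to extend to a character of $\mathrm{Gal}_K^{\mathrm{ab}}$ it is automatically unramified outside $\ell$ and locally algebraic at $\ell$ with bounded algebraic part.

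Two points use the hypotheses. \emph{First}, I would check that the tuple $(m(\sigma,\ell))_\sigma$ defines a character of the identity component $\mathbf{S}_{\mathfrak{m}_0}^\circ$, equivalently that $\prod_\sigma\sigma(u)^{m(\sigma,\ell)}=1$ for every $u\in\calO_K^\times$; this is needed because $\mathbf{S}_{\mathfrak{m}_0}^\circ$ is the quotient of $\Res_{K/\Q}\mathbb{G}_m$ by the Zariski closure of the \emph{full} unit group, the modulus $\mathfrak{m}_0$ having empty support. Feeding the principal id\'ele attached to $u$, which lies in $K^*\cap U_{\mathfrak{m}_0}$, into the congruence above and using that $\chi_\ell$ is trivial on $K^*$ gives $\prod_\sigma\sigma(u)^{m(\sigma,\ell)}\equiv1\ (\mathrm{mod}\ \mathfrak{p}_\ell)$, where Remark \ref{sigma} identifies the relevant $\sigma_\ell$-values of that id\'ele with the images of $\sigma(u)$ in $\bar\Q_\ell$. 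Since $0\le m(\sigma,\ell)\le c_6$ there are only finitely many possible tuples, so each tuple that occurs does so for an infinite set of primes; for such a tuple, $\prod_\sigma\sigma(u)^{m(\sigma,\ell)}$ is a fixed algebraic unit in a fixed number field congruent to $1$ modulo primes of infinitely many distinct residue characteristics, hence equal to $1$. Discarding the finitely many primes whose tuple occurs only finitely often, this holds for all $\ell\gg1$. \emph{Second}, granting this, $\psi_\ell$ is trivial on $\calO_K^\times=K^*\cap U_{\mathfrak{m}_0}$, so it extends over $K^*$ to $U_{\mathfrak{m}_0}K^*$ and then, since $I_K/(K^*U_{\mathfrak{m}_0})$ is the finite ideal class group of $K$ and $\bar\Q_\ell^*$ is divisible, over all of $\mathrm{Gal}_K^{\mathrm{ab}}$; any such extension is continuous, and since its values on the $U_v$ with $v\nmid\ell$ were pinned down to be trivial it is unramified outside $\ell$. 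Serre's theory then gives $\psi_\ell=\Theta_\ell$ for a character $\Theta$ of $\mathbf{S}_{\mathfrak{m}_0}$ (with exponents bounded by $c_6$), and $\theta_\ell$ of Definition \ref{theta} is the mod $\mathfrak{p}_\ell$ reduction of $\psi_\ell$, which on $U_{\mathfrak{m}_0}$ is trivial on $\prod_{v\nmid\ell}U_v$ and is the reduction of $x\mapsto\prod_\sigma\sigma_\ell(x)^{-m(\sigma,\ell)}$ on $\prod_{v\mid\ell}U_v$; by Proposition \ref{323} this coincides with $\chi_\ell$ on $U_{\mathfrak{m}_0}$, which is the assertion.

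The main difficulty will be the first point: Proposition \ref{323} supplies only a congruence, and promoting it to the exact identity $\prod_\sigma\sigma(u)^{m(\sigma,\ell)}=1$ is where the uniform bound $c_6$ and the infinitude of primes are essential — it has to be run through the bookkeeping of finitely many exponent tuples, discarding finitely many bad primes, exactly as at the end of the proof of Theorem \ref{311}. A secondary, routine nuisance is tracking Serre's normalization of $\Theta_\ell$ at the places dividing $\ell$ (the sign in the reciprocity map), so that the algebraic part of $\Theta$ matches $\pm\sum_\sigma m(\sigma,\ell)[\sigma]$ with the sign consistent with the inverse already present in Proposition \ref{323}; this does not affect the argument but must be kept straight.
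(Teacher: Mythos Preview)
Your proposal is correct and follows the same route as the paper: the paper's one-line proof simply invokes the bounded exponents from Proposition~\ref{323} and then cites the proof of \cite[Proposition 20]{S72}, and what you have written is precisely an unpacking of Serre's argument there --- promote the congruence $\prod_\sigma\sigma(u)^{m(\sigma,\ell)}\equiv 1\ (\mathrm{mod}\ \mathfrak{p}_\ell)$ on global units to an equality using the finiteness of exponent tuples and the infinitude of primes, deduce that the tuple defines an algebraic character of the Serre torus, and then lift to a character $\Theta$ of $\mathbf{S}_{\mathfrak{m}_0}$ whose attached $\theta_\ell$ matches $\chi_\ell$ on $U_{\mathfrak{m}_0}$. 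Your identification of the key difficulty (turning the congruence into an exact identity via the pigeonhole on exponent tuples) and of the secondary bookkeeping issue (the sign in the reciprocity map) is accurate; both are handled the same way in Serre's proof.
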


\begin{proof} Since $0\leq m(\sigma,\ell)\leq c_6$ for all $\sigma\in\mathrm{Hom}(K,\bar{\Q})$ and $\ell\gg1$ by Proposition \ref{323}, the proposition follows by the proof of \cite[Proposition 20]{S72}. \end{proof}

Let $\Psi:\mathbf{S}_{\mathfrak{m}_0}\rightarrow \mathrm{GL}_{n,\mathbb{Q}}$ be a $\mathbb{Q}$-morphism of the Serre group $\mathbf{S}_{\mathfrak{m}_0}$ with finite kernel. Then $\Psi$ induces a strictly compatible system $\{\Psi_\ell\}_{\ell\in\mathscr{P}}$ of abelian $\ell$-adic representations of $\mathrm{Gal}_K$ \cite[Chapter 2]{Sbk} with $S=\emptyset$ (Definition \ref{comsys}):
\begin{equation*}
\Psi_\ell:\mathrm{Gal}_K\rightarrow \mathrm{Gal}_K^{\mathrm{ab}}\rightarrow \mathrm{GL}_n(\Q_\ell).
\end{equation*}
We may assume $\{\Psi_\ell\}$ is integral \cite[Chapter 2 $\mathsection3.4$]{Sbk} by twisting $\{\Psi_\ell\}$ with suitable big power of the system of cyclotomic characters.  

\begin{proposition}\label{325} Given $\Psi$ and $\{\Psi_\ell\}_{\ell\in\mathscr{P}}$ as above.
\begin{enumerate}
\item[(i)] The subgroup generated by the characters of $\Psi$ is of finite index in the character group of $\mathbf{S}_{\mathfrak{m}_0}$. Denote this index by $k$.
\item[(ii)] For any $\ell$ and character $\theta_\ell$ of $I_K$ induced from a character $\Theta$ of $\mathbf{S}_{\mathfrak{m}_0}$ in Definition \ref{theta},
 we obtain the following congruence for all $x\in U_{\mathfrak{m}_0}\subset I_K$
\begin{equation*}
\theta_\ell(x)\equiv \prod_{\sigma\in\mathrm{Hom}(K,\bar{\mathbb{Q}})}\sigma_\ell(x_\ell^{-1})^{m(\sigma)}\hspace{.3in}(\mathrm{mod}\hspace{.05in}\mathfrak{p}_\ell).
\end{equation*}
such that $ m(\sigma)\geq0$ for all $\sigma$.\end{enumerate}\end{proposition}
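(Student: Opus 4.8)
The plan is to handle the two parts separately; both reduce to the structure of the Serre group recalled in \cite[Chapter 2]{Sbk}, plus one elementary observation for part (ii).

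For (i), I would use that $\mathbf{S}_{\mathfrak{m}_0}$ is a commutative affine group of multiplicative type and of finite type over $\Q$, so its character group $X:=X^*(\mathbf{S}_{\mathfrak{m}_0})$ is finitely generated. Diagonalizing $\Psi$ over $\bar{\Q}$, the characters of $\Psi$ are its $n$ weights, which lie in $X$ and generate the subgroup $M:=X^*(\Psi(\mathbf{S}_{\mathfrak{m}_0}))$, namely the image of the homomorphism $X^*(\mathbb{G}_m^n)\to X$ dual to $\mathbf{S}_{\mathfrak{m}_0}\twoheadrightarrow\Psi(\mathbf{S}_{\mathfrak{m}_0})\hookrightarrow\mathbb{G}_m^n$ (the diagonal torus, after conjugating the image into it). Applying Cartier duality to $1\to\ker\Psi\to\mathbf{S}_{\mathfrak{m}_0}\to\Psi(\mathbf{S}_{\mathfrak{m}_0})\to1$ identifies $X/M$ with $X^*(\ker\Psi)$, which is finite because $\Psi$ has finite kernel; hence $k:=[X:M]<\infty$. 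This part is formal.

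For (ii), I would first unwind Definition \ref{theta}: $\theta_\ell$ is the reduction mod $\mathfrak{p}_\ell$ of $I_K\to\mathrm{Gal}_K^{\mathrm{ab}}\to\mathbf{S}_{\mathfrak{m}_0}(\Q_\ell)\xrightarrow{\Theta}\bar{\Q}_\ell^*$, the middle arrow being the canonical $\ell$-adic character of the Serre group. The ingredient I need from \cite[Chapter 2]{Sbk} is the explicit local form of this canonical map: since $\mathfrak{m}_0$ has empty support it is trivial on $\prod_{v\nmid\ell}U_v$, while on $K_\ell^*=(\mathrm{Res}_{K/\Q}\mathbb{G}_m)(\Q_\ell)\supset\prod_{v\mid\ell}U_v$ it is $x_\ell\mapsto\pi(x_\ell^{-1})$, where $\pi:\mathrm{Res}_{K/\Q}\mathbb{G}_m\twoheadrightarrow\mathbf{S}_{\mathfrak{m}_0}$ is the structural epimorphism (the inverse matching the $x_\ell^{-1}$ normalization already used in Propositions \ref{323} and \ref{324}). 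Writing $\pi^*\Theta=\sum_\sigma m(\sigma)\sigma$ in $X^*(\mathrm{Res}_{K/\Q}\mathbb{G}_m)=\Z[\mathrm{Hom}(K,\bar{\Q})]$ with $m(\sigma)\in\Z$, this yields, for every $x\in U_{\mathfrak{m}_0}$,
\begin{equation*}
\theta_\ell(x)\equiv\Theta(\pi(x_\ell^{-1}))=\prod_{\sigma\in\mathrm{Hom}(K,\bar{\Q})}\sigma_\ell(x_\ell^{-1})^{m(\sigma)}\pmod{\mathfrak{p}_\ell}.
\end{equation*}

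The only substantive step is to make $m(\sigma)\geq0$, and for this I would invoke Remark \ref{sigma}: $\sigma_\ell$ factors through the completion $K_{v(\sigma)}$ at $v(\sigma):=\bar{v}_\ell\circ\sigma$, so for $x\in U_{\mathfrak{m}_0}$ the class $\sigma_\ell(x_\ell^{-1})\bmod\mathfrak{p}_\ell$ lies in the image of $k_{v(\sigma)}^*$ inside $k_\ell^*\cong\bar{\F}_\ell^*$, a cyclic group of order $\ell^{[k_{v(\sigma)}:\F_\ell]}-1$. Hence replacing each $m(\sigma)$ by $m(\sigma)+M\bigl(\ell^{[k_{v(\sigma)}:\F_\ell]}-1\bigr)$ for a single integer $M$ large enough (allowed to depend on $\ell$ and the fixed $\Theta$, since $\ell$ is fixed in the statement) changes no factor, hence not the product, modulo $\mathfrak{p}_\ell$, while making all exponents nonnegative; this is the asserted congruence. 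I expect the main obstacle, if any, to be purely bookkeeping --- pinning down the exact local description and sign normalization of the canonical map into $\mathbf{S}_{\mathfrak{m}_0}(\Q_\ell)$ so that everything aligns with the $x_\ell^{-1}$ convention used in the rest of this section --- rather than anything conceptually hard.
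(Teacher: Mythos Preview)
Your argument for (i) is exactly the paper's: $\Psi$ having finite kernel makes it an isogeny onto its image, and Cartier duality gives the finite index. For (ii) your proof is correct for the statement as written but takes a different route. The paper simply invokes the integrality of $\{\Psi_\ell\}$---arranged immediately before the proposition by twisting with a large power of the cyclotomic character---together with the explicit local description of the canonical map into $\mathbf{S}_{\mathfrak{m}_0}(\Q_\ell)$ from \cite[Ch.~2,~3]{Sbk}; integrality here means precisely that the weights in $\pi^*\Theta=\sum_\sigma m(\sigma)\sigma$ are already $\geq 0$, with no adjustment needed. Your device of adding a multiple of $|k_{v(\sigma)}^*|=\ell^{[k_{v(\sigma)}:\F_\ell]}-1$ to each exponent is valid and more self-contained, but it produces $m(\sigma)$ that depend on $\ell$ and grow with $\ell$. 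That distinction matters just after the proposition, where Proposition~\ref{325}(ii) is invoked to conclude that the tame-inertia exponents of $\psi_\ell$ are \emph{bounded independently of $\ell$}: this relies on the $m(\sigma)$ being the fixed weights of the finitely many characters of $\Psi$, which the integrality argument delivers directly but your adjustment obscures.
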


\begin{proof} Part (i) follows by $\Psi$ is an isogeny from $\mathbf{S}_{\mathfrak{m}_0}$ onto $\Psi(\mathbf{S}_{\mathfrak{m}_0})$. Part (ii) follows by the integrality of the system $\{\Psi_\ell\}$ and the theory of abelian $\ell$-adic representations \cite[Chapter 2,3]{Sbk}.\end{proof}
\vspace{.1in}

Denote the semi-simplification of some mod $\ell$ reduction of $\Psi_\ell$ by $\psi_\ell$ for all $\ell$. Consider the following strictly compatible system of $\ell$-adic representations
\begin{equation*}
\{\Phi_\ell\times\Psi_\ell:\mathrm{Gal}_K\rightarrow \mathrm{GL}_N(\Q_\ell)\times\mathrm{GL}_n(\Q_\ell)\}_{\ell\in\mathscr{P}}.
\end{equation*}
  The semi-simplification of some mod $\ell$ reduction of $\{\Phi_\ell\times\Psi_\ell\}_{\ell\in\mathscr{P}}$:
\begin{equation*}
\{\phi_\ell\times \psi_\ell: \mathrm{Gal}_K\rightarrow \mathrm{GL}_N(\mathbb{F}_\ell)\times\mathrm{GL}_n(\mathbb{F}_\ell)\}_{\ell\in\mathscr{P}}
\end{equation*}
is then a strictly compatible system of mod $\ell$ representations (Definition \ref{comsys}). Denote the image of $\phi_\ell\times\psi_\ell$ by $\bar{\Gamma}_\ell'$. Let $\bar{v}\in\Sigma_{\bar{K}}$ divide $\ell$. When we restrict $\phi_\ell\times\psi_\ell$ to inertia subgroup $I_{\bar{v}}$ of $\mathrm{Gal}_K$ and then semi-simplify, the exponents of characters of tame inertia quotient $I_{\bar{v}}^{\mathrm{t}}$ for some level are bounded independent of $\ell$ by $\mathsection2.3$, Proposition \ref{325}(ii), and \cite[Proposition 3]{S72}. Therefore, we can construct as in $\mathsection 2$  semisimple envelopes  $\{\bar{\mathbf{S}}_\ell'\}_{\ell\gg1}$ (Definition \ref{Nori}), inertia tori $\{\bar{\mathbf{I}}_\ell'\}_{\ell\gg1}$ (Theorem \ref{242}), and algebraic envelopes $\{\bar{\mathbf{G}}_\ell'\}_{\ell\gg1}$ (Definition \ref{env}) of $\{\bar{\Gamma}_\ell'\}_{\ell\gg1}$.

Since $\psi_\ell$ is semisimple and abelian, we see that Nori's construction gives $\bar{\mathbf{S}}_\ell'=\bar{\mathbf{S}}_\ell\times\{1\}\subset\mathrm{GL}_{N,\mathbb{F}_\ell}\times\mathrm{GL}_{n,\mathbb{F}_\ell}$. The normalizer of $\bar{\mathbf{S}}_\ell\times\{1\}$ in $\mathrm{GL}_{N,\mathbb{F}_\ell}\times\mathrm{GL}_{n,\mathbb{F}_\ell}$ is $\bar{\mathbf{N}}_\ell\times\mathrm{GL}_{n,\mathbb{F}_\ell}$. We have
\begin{equation*}
t_\ell\times\mathrm{id}:\bar{\mathbf{N}}_\ell\times\mathrm{GL}_{n,\mathbb{F}_\ell}\rightarrow \mathrm{GL}_{{W}_\ell}\times\mathrm{GL}_{n,\mathbb{F}_\ell}
\end{equation*}
with kernel $\bar{\mathbf{S}}_\ell\times\{1\}$. Therefore, we obtain a map
\begin{equation*}
\mu_\ell\times \psi_\ell:\mathrm{Gal}_K^\mathrm{ab}\rightarrow \mathrm{GL}({W}_\ell)\times\mathrm{GL}_n(\mathbb{F}_\ell)
\end{equation*}
with image denoted by $\bar{\Omega}_\ell'$. As $\bar{\Omega}_\ell'$ is abelian, denote the composition of $\mu_\ell$ and $\psi_\ell$ with $I_K\rightarrow \mathrm{Gal}_K^{\mathrm{ab}}$ by $\widetilde{\mu}_\ell$ and $\widetilde{\psi}_\ell$ for all $\ell$. By $(\ast)$ in the proof of Theorem \ref{242} and \cite[Proposition 9.5]{Neu}, we assume by taking a finite extension of $K$ that
\begin{equation*}
(\ast\ast):~~~(\widetilde{\mu}_\ell\times\widetilde{\psi}_\ell)(\prod_{v|\ell}U_v)=\bar{\Omega}_\ell'\hspace{.2in}\forall \ell\gg1.
\end{equation*}

\begin{proposition}\label{326} Let $p_2:\mathrm{GL}_{W_\ell}\times\mathrm{GL}_{n,\mathbb{F}_\ell}$ be the projection to the second factor. Then $p_2$ is an isogeny from $\bar{\mathbf{I}}_\ell'$ onto $p_2(\bar{\mathbf{I}}_\ell')$ for $\ell\gg1$. \end{proposition}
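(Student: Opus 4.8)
The plan is to reduce the assertion to an equality of dimensions and then establish that equality through the congruences of $\mathsection3.2$. Since $p_2\colon\bar{\mathbf{I}}_\ell'\to p_2(\bar{\mathbf{I}}_\ell')$ is surjective by construction, ``$p_2$ is an isogeny'' is equivalent to $\dim\bar{\mathbf{I}}_\ell'=\dim p_2(\bar{\mathbf{I}}_\ell')$. First I would unwind the construction of $\bar{\mathbf{I}}_\ell'$: by Theorem \ref{242} applied to $\{\phi_\ell\times\psi_\ell\}$, the $\F_\ell$-torus $\bar{\mathbf{I}}_\ell'$ is generated by finitely many pairwise commuting tame inertia tori $\bar{\mathbf{I}}_{\bar{v}_1}',\dots,\bar{\mathbf{I}}_{\bar{v}_k}'$ with $\bar{v}_j\mid\ell$ and $k$ bounded independently of $\ell$, each $\bar{\mathbf{I}}_{\bar{v}_j}'$ being the image of an $\ell$-restricted $\F_\ell$-morphism $w_{\bar{v}_j}'\colon\bar{\mathbf{E}}_\ell\to\mathrm{GL}_{W_\ell}\times\mathrm{GL}_{n,\F_\ell}$ extending the semi-simplified restriction of $\mu_\ell\times\psi_\ell$ to $I_{\bar{v}_j}^{\mathrm{t}}$. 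By uniqueness of the $\ell$-restricted extension (Definition \ref{tori}), $w_{\bar{v}_j}'=(w_{\bar{v}_j}^{(1)},w_{\bar{v}_j}^{(2)})$, where $w_{\bar{v}_j}^{(1)}$ (resp. $w_{\bar{v}_j}^{(2)}$) extends $\mu_\ell|_{I_{\bar{v}_j}^{\mathrm{t}}}$ (resp. $\psi_\ell|_{I_{\bar{v}_j}^{\mathrm{t}}}$). Hence $\bar{\mathbf{I}}_\ell'=\im\Pi$ and $p_2(\bar{\mathbf{I}}_\ell')=\im\Pi_2$, where $\Pi=\Pi_1\times\Pi_2\colon\bar{\mathbf{E}}_\ell^{\,k}\to\mathrm{GL}_{W_\ell}\times\mathrm{GL}_{n,\F_\ell}$, $\Pi_i(e_1,\dots,e_k)=\prod_jw_{\bar{v}_j}^{(i)}(e_j)$. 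Since $\ker\Pi=\ker\Pi_1\cap\ker\Pi_2\subseteq\ker\Pi_2$, the desired equality of dimensions is equivalent to $(\ker\Pi_2)^\circ\subseteq\ker\Pi_1$, and, passing to character lattices, to
\begin{equation*}
\langle\text{weights of }\Pi_1\rangle_\Q\ \subseteq\ \langle\text{weights of }\Pi_2\rangle_\Q\qquad\text{in }\X(\bar{\mathbf{E}}_\ell^{\,k})\otimes\Q .
\end{equation*}

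This inclusion is what I would prove, and it is where the Serre group enters. Since $\rho_\ell\colon I_K\to\mathrm{GL}(W_\ell)$ (which on each $U_v$ agrees with $\mu_\ell|_{I_{\bar v}^{\mathrm{t}}}$, cf. the proof of Proposition \ref{323}) and $\psi_\ell$ are semisimple with image of order prime to $\ell$, the weights of $\Pi_1$ are among the tuples $\Lambda(\chi)=(\widetilde{\chi}_{\bar{v}_1},\dots,\widetilde{\chi}_{\bar{v}_k})\in\bigoplus_j\X(\bar{\mathbf{E}}_\ell)$ indexed by the characters $\chi$ of $\rho_\ell$, where $\widetilde{\chi}_{\bar{v}_j}$ is the $\ell$-restricted extension to $\bar{\mathbf{E}}_\ell$ of $\chi|_{I_{\bar{v}_j}^{\mathrm{t}}}$ (identified via $\theta_{\bar{v}_j}$); similarly the weights of $\Pi_2$ are among the tuples $M(\lambda)=(\widetilde{\lambda}_{\bar{v}_1},\dots,\widetilde{\lambda}_{\bar{v}_k})$ for $\lambda$ a character of $\psi_\ell$. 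Now fix a character $\chi$ of $\rho_\ell$. By Proposition \ref{324}, for $\ell\gg1$ there is a character $\Theta$ of $\mathbf{S}_{\mathfrak{m}_0}$ with $\chi(x)=\theta_\ell(x)$ for all $x\in U_{\mathfrak{m}_0}$; by Proposition \ref{323} the exponents of $\Theta$ are bounded by $c_6$, so $\Theta$ ranges over a finite set of characters of $\mathbf{S}_{\mathfrak{m}_0}$ depending only on $K$ and $N$. By Proposition \ref{325}(i) the characters of $\Psi$ generate a subgroup of finite index $k$ in $\X(\mathbf{S}_{\mathfrak{m}_0})$, so $k\Theta$ is an integer combination, with coefficients bounded independently of $\ell$ (there are only finitely many $\Theta$), of characters of $\Psi$. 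Passing to the attached $\ell$-adic representations and reducing modulo $\mathfrak{p}_\ell$, and combining with $\chi|_{U_{\mathfrak{m}_0}}=\theta_\ell|_{U_{\mathfrak{m}_0}}$, gives an identity of characters of $U_{\mathfrak{m}_0}$ expressing $\chi^{k}|_{U_{\mathfrak{m}_0}}$ as a product $\prod_\lambda\lambda^{\,e(\chi,\lambda)}|_{U_{\mathfrak{m}_0}}$ over the characters $\lambda$ of $\psi_\ell$, with integers $e(\chi,\lambda)$ bounded independently of $\ell$ and of the place.

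Restricting this identity to $U_v$ for each $v\mid\ell$ and translating it through the local--global compatibility of class field theory \cite[Proposition 3]{S72} together with the definition of $w_{\bar{v}}^{(i)}$ (Definition \ref{tori}) yields, in $\X(\mathbb{E}_\ell^*)$, the relation $k\cdot r(\widetilde{\chi}_{\bar{v}})=\sum_\lambda e(\chi,\lambda)\,r(\widetilde{\lambda}_{\bar{v}})$, where $r\colon\X(\bar{\mathbf{E}}_\ell)\to\X(\mathbb{E}_\ell^*)$ is restriction to $\F_\ell$-points. Both sides are images under $r$ of characters of $\bar{\mathbf{E}}_\ell$ whose exponents are bounded by a constant $c=c(N)$: on the left by $kc_6$, on the right by $\sum_\lambda|e(\chi,\lambda)|$ times the uniform exponent bound for $\psi_\ell|_{I_{\bar{v}}^{\mathrm{t}}}$ ($\mathsection2.3$, Proposition \ref{325}(ii), \cite[Proposition 3]{S72}). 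For $\ell$ large, $r$ is injective on characters with exponents $<\ell-1$: writing such a character as $\sum_i m_i\,\theta_\delta^{\ell^i}$ with $\delta=[\mathbb{E}_\ell:\F_\ell]$ and $|m_i|<\ell-1$, it lies in $\ker r$ only if $\sum_i m_i\ell^i\equiv0\pmod{\ell^\delta-1}$, and the size bound $|\sum_i m_i\ell^i|<\ell^\delta-1$ together with base-$\ell$ (equivalently, uniqueness of $\ell$-restricted expressions, Definition \ref{restrict}) forces all $m_i=0$. Hence for $\ell>2c+1$ we obtain the genuine identity $k\cdot\widetilde{\chi}_{\bar{v}}=\sum_\lambda e(\chi,\lambda)\,\widetilde{\lambda}_{\bar{v}}$ in $\X(\bar{\mathbf{E}}_\ell)$ for every $v\mid\ell$, the coefficients $e(\chi,\lambda)$ being independent of $v$. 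Assembling over $\bar{v}_1,\dots,\bar{v}_k$ gives $k\cdot\Lambda(\chi)=\sum_\lambda e(\chi,\lambda)\,M(\lambda)$ in $\X(\bar{\mathbf{E}}_\ell^{\,k})$, so $\Lambda(\chi)$ lies in the $\Q$-span of the weights of $\Pi_2$. As $\chi$ was arbitrary, the displayed inclusion holds and $p_2$ restricts to an isogeny $\bar{\mathbf{I}}_\ell'\to p_2(\bar{\mathbf{I}}_\ell')$ for $\ell\gg1$.

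The main obstacle is the passage in the last two paragraphs from the global congruences of Propositions \ref{323}--\ref{325}, which only constrain the characters of $\rho_\ell$ on $U_{\mathfrak{m}_0}$ and only ``modulo the Serre group,'' to honest equalities of weights of the $\F_\ell$-tori $\bar{\mathbf{I}}_{\bar{v}}'$. Two points make this work and deserve emphasis: the coefficients $e(\chi,\lambda)$ are \emph{uniformly bounded} --- this uses Proposition \ref{323} to confine $\Theta$ to a finite set --- and they are the \emph{same} for every $v\mid\ell$, so the per-place identities assemble into a single identity of weight tuples; and the reduction map $r$ is injective on characters of bounded exponents once $\ell$ is large. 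Everything else --- the reduction to the dimension equality, the description $\bar{\mathbf{I}}_\ell'=\im(\Pi_1\times\Pi_2)$, and the dictionary between kernels of torus morphisms and $\Q$-spans of weights --- is routine.
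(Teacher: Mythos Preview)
Your argument is essentially correct, but it takes a considerably longer route than the paper's. Both proofs rest on the same core input---Propositions~\ref{323}, \ref{324}, and \ref{325}(i), which say that every character of $\rho_\ell$ on $U_{\mathfrak{m}_0}$ agrees with the reduction of a character $\Theta$ of $\mathbf{S}_{\mathfrak{m}_0}$ drawn from a finite set, and that $k\Theta$ is an integer combination of characters of $\Psi$. You translate this into an inclusion of $\Q$-spans of weight tuples in $\X(\bar{\mathbf{E}}_\ell^{\,k})$, which requires building $\Pi_1,\Pi_2$, matching the weights of $\Pi_i$ with tuples $\Lambda(\chi)$, $M(\lambda)$, and then invoking an injectivity-of-restriction argument to lift identities from $\X(\mathbb{E}_\ell^*)$ to $\X(\bar{\mathbf{E}}_\ell)$.

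The paper's proof bypasses the character lattice entirely and argues by point counting. It takes any $(x,1)\in\bar{\Omega}_\ell'\cap\mathrm{Ker}(p_2)$, writes $(x,1)=(\widetilde{\mu}_\ell\times\widetilde{\psi}_\ell)(x_\ell)$ for some $x_\ell\in\prod_{v\mid\ell}U_v$ using $(\ast\ast)$, and observes that $\widetilde{\psi}_\ell(x_\ell)=1$ together with Propositions~\ref{324} and \ref{325}(i) forces every character $\chi$ of $\rho_\ell$ to satisfy $\chi(x_\ell)^k=1$, hence $x^k=1$. This bounds $|\bar{\Omega}_\ell'\cap\mathrm{Ker}(p_2)|$ by $k^{\dim W_\ell}$, and since $[\bar{\mathbf{I}}_\ell'(\F_\ell):\bar{\Omega}_\ell'\cap\bar{\mathbf{I}}_\ell'(\F_\ell)]$ is uniformly bounded by Theorem~\ref{242}(ii), the $\F_\ell$-diagonalizable group $\mathrm{Ker}(p_2)\cap\bar{\mathbf{I}}_\ell'$ must have dimension zero for $\ell\gg1$. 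Your approach yields a sharper structural statement (the explicit weight-lattice inclusion), but the paper's is much shorter and uses the bounded-index property of Theorem~\ref{242}(ii) directly rather than re-deriving rigidity at the level of $\X(\bar{\mathbf{E}}_\ell)$.
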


\begin{proof} Let $(x,1)\in \mathrm{GL}_{W_\ell}\times\mathrm{GL}_{n,\mathbb{F}_\ell}$ be an element of $ \bar{\Omega}_\ell'\cap \mathrm{Ker}(p_2)$, where $(x,1)=(\widetilde{\mu}_\ell\times\widetilde{\psi}_\ell)(x_\ell)$ for some $x_\ell\in\prod_{v|\ell}U_v$ (Definition \ref{Serre}) by $(\ast\ast)$ above. Since $\Psi:\mathbf{S}_{\mathfrak{m}_0}\rightarrow \mathrm{GL}_{n,\mathbb{Q}}$ has finite kernel and $\widetilde{\mu}_\ell\times\widetilde{\psi}_\ell$ is abelian and semisimple, we have $x^k=1$ for $\ell\gg1$ by $1=\widetilde{\psi}_\ell(x_\ell)$, 
Proposition \ref{324}, and Proposition \ref{325}(i). Since $\bar{\Omega}_\ell'$ is abelian of order prime to $\ell$, $x^k=1$ implies $x$ has at most $k^{\mathrm{dim}(W_\ell)}$ possibilities (by diagonalizing the image of $\widetilde{\mu}_\ell$) which implies 
\begin{equation*}
|\bar{\Omega}_\ell'\cap \mathrm{Ker}(p_2)|\leq k^{\mathrm{dim}(W_\ell)}.
\end{equation*} 
Therefore, the $\mathbb{F}_\ell$-diagonalizable group $\mathrm{Ker}(p_2)\cap \bar{\mathbf{I}}_\ell'$ cannot have positive dimension for $\ell\gg1$ because $[\bar{\mathbf{I}}_\ell'(\mathbb{F}_\ell):\bar{\Omega}_\ell'\cap \bar{\mathbf{I}}_\ell'(\mathbb{F}_\ell)]$ is also uniformly bounded by Theorem \ref{242}(ii). Thus, $p_2$ is an isogeny from $\bar{\mathbf{I}}_\ell'$ onto $p_2(\bar{\mathbf{I}}_\ell')$. \end{proof}

\noindent \textbf{\textit{Proof of Theorem \ref{321}.}}

\begin{proof} The mod $\ell$ system 
$$\{\phi_\ell\times\psi_\ell:\mathrm{Gal}_K\rightarrow \mathrm{GL}_N(\mathbb{F}_\ell)\times\GL_n(\F_\ell)\}$$ 
comes from the $\ell$-adic system (i.e., the semi-simplification of a mod $\ell$ reduction) 
$$\{ \Phi_\ell^{\ss}\times\Psi_\ell:\mathrm{Gal}_K\rightarrow \mathrm{GL}_N(\Q_\ell)\times\GL_n(\Q_\ell)\}.$$ 
Let $\mathbf{G}_\ell'$ be the algebraic monodromy group of semisimple representation $\Phi_\ell^{\ss}\times\Psi_\ell$ for all $\ell$. 
Thus, $\mathbf{G}_\ell'$ is reductive and we may assume $\mathbf{G}_\ell'$ is connected for all $\ell$ by taking a finite extension of $K$.
Denote the projection to the first and second factor of 
$\mathrm{GL}_{N}\times\mathrm{GL}_{n}$ by respectively $p_1$ and $p_2$. 
Consider the map
\begin{equation*}
\textit{Char}_1\times\textit{Char}_2:\GL_N\times\GL_n\rightarrow (\mathbb{G}_a^{N-1}\times\mathbb{G}_m) \times (\mathbb{G}_a^{n-1}\times\mathbb{G}_m)
\end{equation*}
where $\textit{Char}_i=\textit{Char}\circ p_i$, $i=1,2$. Note that the restriction of $\textit{Char}_1\times\textit{Char}_2$ to $\mathbb{G}_m^N\times\mathbb{G}_m^n$ is a finite morphism. Let $\mathbf{T}_\ell'$ be a maximal torus of monodromy group $\mathbf{G}_\ell'$ and 
$\bar{\mathbf{T}}_\ell'$ a maximal torus of $\bar{\mathbf{G}}_\ell'$, the algebraic envelope of the mod $\ell$ representation $\phi_\ell\times\psi_\ell$.
Up to conjugation by $\GL_N\times\GL_n$ (over algebraically closed fields), 
we may assume
$\mathbf{T}_\ell'$ 
and $\bar{\mathbf{T}}_\ell'$ are diagonal (i.e., inside $\mathbb{G}_m^{N+n}$).
We claim that up to permutation of coordinates
by $\mathrm{Perm}(N)\times\mathrm{Perm}(n)$,
$\mathbf{T}_\ell'$ 
and $\bar{\mathbf{T}}_\ell'$ are annihilated by the 
same set of characters of $\mathbb{G}_m^{N+n}$ 
for all sufficiently large $\ell$. 
The proof of the claim goes exactly the same as the proof of Theorem \ref{311}(i) with the following 
replacements: 
\begin{itemize}
\item[\textbullet] $\GL_N$ $\longrightarrow$ $\GL_N\times\GL_n$
\item[\textbullet] Morphism $\textit{Char}$ $\longrightarrow$ morphism $\textit{Char}_1\times\textit{Char}_2$
\item[\textbullet] $\Q$-variety $\textit{Char}(\mathbf{G}_\ell)$ $\longrightarrow$ $\Q$-variety $\textit{Char}_1\times\textit{Char}_2(\mathbf{G}_\ell')$
\item[\textbullet] $\mathrm{Perm}(N)$ $\longrightarrow$ $\mathrm{Perm}(N)\times\mathrm{Perm}(n)$
\end{itemize}

Therefore, $\mathbf{T}_\ell'':=\mathrm{Ker}(p_2:\mathbf{T}_\ell'\to p_2(\mathbf{T}_\ell'))^\circ$ and $\bar{\mathbf{T}}_\ell'':=\mathrm{Ker}(p_2:\bar{\mathbf{T}}_\ell'\to p_2(\bar{\mathbf{T}}_\ell'))^\circ$ as subtori of $\mathbb{G}_m^N$ are annihilated by the same set of characters for $\ell\gg1$.
Torus $\mathbf{T}_\ell''$ is the formal character of 
$\mathbf{G}_\ell^\mathrm{der}\hookrightarrow\mathrm{GL}_{N,\Q_\ell}$ \cite[proof of Theorem 3.19]{Hui}.
It suffices to show $\bar{\mathbf{T}}_\ell''$ is a maximal torus of $\bar{\mathbf{S}}_\ell$ for $\ell\gg1$.
Since the dimension of torus $\bar{\mathbf{I}}_\ell'$ is equal to the dimension of the center of algebraic envelope $\bar{\mathbf{G}}_\ell'$ for $\ell\gg1$ (see $\mathsection2.5$) and $p_2$ is an isogeny from $\bar{\mathbf{I}}_\ell'$ onto $p_2(\bar{\mathbf{I}}_\ell')$ by Proposition \ref{326} for $\ell\gg1$, the identity component of the kernel of 
$$p_2:\bar{\mathbf{G}}_\ell'\to p_2(\bar{\mathbf{G}}_\ell')=p_2(\bar{\mathbf{I}}_\ell')$$
is $\bar{\mathbf{S}}_\ell'$ (the semisimple part of $\bar{\mathbf{G}}_\ell'$) for $\ell\gg1$.  
Since
$p_2(\bar{\mathbf{T}}_\ell')=p_2(\bar{\mathbf{G}}_\ell')=p_2(\bar{\mathbf{I}}_\ell')$ for $\ell\gg1$,
$\bar{\mathbf{T}}_\ell''$ by construction is a maximal torus of $\bar{\mathbf{S}}_\ell'=\bar{\mathbf{S}}_\ell\times\{1\}$ for $\ell\gg1$.
Hence, the formal character of $\bar{\mathbf{S}}_\ell\hookrightarrow\GL_{N,\F_\ell}$ and $\mathbf{G}_\ell^{\der}\hookrightarrow\GL_{N,\Q_\ell}$ are the same for $\ell\gg1$. This proves (i). Since the formal character of 
$\mathbf{G}_\ell^{\der}\hookrightarrow\GL_{N,\Q_\ell}$ is independent of $\ell$ \cite[Theorem 3.19]{Hui}, we obtain (ii) by (i).
\end{proof}

\subsection{Proofs of Theorem \ref{main} and Corollary \ref{cor}}
The following purely representation theoretic result is crucial to the study of Galois images $\bar\Gamma_\ell$ for $\ell\gg1$.

\begin{theorem}\label{331}\cite[Theorem 2.19]{Hui}
Let $V$ be a finite dimensional $\C$-vector space and $\rho_1:\mathfrak{g}\to\End(V)$ and $\rho_2:\mathfrak{h}\to\End(V)$ are two faithful representations of complex semisimple Lie algebras. If the formal characters of $\rho_1$ and $\rho_2$ are equal, then the number of $A_n$ factors for $n\in\N\backslash\{1,2,3,4,5,7,8\}$ and the parity of $A_4$ factors of $\mathfrak{g}$ and $\mathfrak{h}$ are equal.
\end{theorem}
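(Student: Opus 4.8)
The plan is to translate the hypothesis into a combinatorial statement about weight multisets and then resolve it by a finite classification of the ways a prescribed formal character can be realized by a semisimple Lie algebra. This is in the spirit of Larsen and Pink's reconstruction of a representation from its invariant dimensions, but one must work with the strictly weaker information carried by the formal character alone (the weights of a maximal torus, not the $\mathbf{G}$-invariants of tensor powers), and it is this loss of information that forces a larger list of exceptions.

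First I would fix the weight data. By Proposition~\ref{p201}, equality of the formal characters of $\rho_{1}$ and $\rho_{2}$ means that, after conjugating in $\GL(V)$, the connected subgroups $\mathbf{G}_{1},\mathbf{G}_{2}\subset\GL(V)$ with Lie algebras $\rho_{1}(\mathfrak{g})$ and $\rho_{2}(\mathfrak{h})$ share a maximal torus $\mathbf{D}$; equivalently the multiset $\mathcal{W}=\{w_{1},\dots,w_{N}\}$ of weights of $\mathbf{D}$ on $V$ is fixed up to automorphism of the lattice $\mathcal{X}:=\langle w_{1},\dots,w_{N}\rangle\cong\mathbb{Z}^{r}$. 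This constrains the root data of $\mathbf{G}_{1}$ and $\mathbf{G}_{2}$: every root $\alpha$ lies in the difference set $\mathcal{W}-\mathcal{W}$ and carries a coroot $\alpha^{\vee}$ with $\langle\alpha^{\vee},\alpha\rangle=2$, $\langle\alpha^{\vee},\mathcal{W}\rangle\subset\mathbb{Z}$ and $s_{\alpha}(\mathcal{W})=\mathcal{W}$, while $\mathcal{W}$ must be a nonnegative sum of characters of representations of $\mathbf{G}_{i}$. Decomposing $\mathfrak{g}=\bigoplus_{a}\mathfrak{g}_{a}$ and $\mathfrak{h}=\bigoplus_{b}\mathfrak{h}_{b}$ into simple ideals and $V$ accordingly into a sum of outer tensor products $\boxtimes_{a}U_{a}$ of irreducibles, the formal character of $V$ becomes a sum over these summands of convolutions over the tensor factors of the formal characters of the $U_{a}$. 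Thus the theorem reduces to a question about simple Lie algebras, their irreducibles, and outer tensor products of such.

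The technical heart is then the classification: from the weight multiset of an irreducible $V_{\lambda}$ of a simple Lie algebra $\mathfrak{s}$ of rank $n$ (or of an outer tensor product of such), taken up to lattice automorphism, recover $\mathfrak{s}$ and $\lambda$. The tools are the shape of the weight polytope and its top minuscule and quasi-minuscule layers, the numerical data of the ``formal'' constructions $\mathrm{Sym}^{k}$ and $\wedge^{k}$ of $V$ (all determined by $\mathcal{W}$), and the sublattice of $\mathcal{X}$ spanned by differences of extreme weights, which recovers the root lattice and thence the Cartan matrix. This pins down the type except for a short list of low-rank accidents: the exceptional isomorphisms $A_{1}=B_{1}=C_{1}$, $D_{2}=A_{1}\times A_{1}$, $B_{2}=C_{2}$, $A_{3}=D_{3}$, together with a few sporadic coincidences of formal characters (involving exterior powers of standard representations, $D_{4}$-triality, and small representations of the exceptional types). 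These accidents are responsible for the exclusion of $n\in\{1,2,3,4,5,7,8\}$; in the one surviving borderline case $n=4$, the discrepancy between two different realizations always involves an even number of $A_{4}$-factors, so that only the parity of the $A_{4}$-count remains an invariant. For every other $n$, an $A_{n}$-factor carrying any irreducible representation is detected unambiguously from $\mathcal{W}$, and its multiplicity in the tensor-product decomposition of $V$ is read off $\rho_{1}$ and $\rho_{2}$ in the same way.

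Assembling these local facts—matching, type by type, the simple ideals of $\mathfrak{g}$ and $\mathfrak{h}$ that can account for a given contribution to $\mathcal{W}$—then gives that the number of $A_{n}$-factors for $n\notin\{1,2,3,4,5,7,8\}$, and the parity of the number of $A_{4}$-factors, must agree. The main obstacle I anticipate is the classification step for outer tensor products: a single formal character can factor as a convolution of ``irreducible'' pieces in more than one way, so one must exclude, or carefully track, competing decompositions such as $\mathbf{std}\boxtimes\mathbf{std}\boxtimes\cdots$ assembled from simple factors of different types; controlling these alternatives, together with the finite but delicate bookkeeping of the small-rank accidents above, is where the real work lies. (This is carried out in \cite{Hui}, Theorem~2.19; what is sketched here is only the shape of that proof.)
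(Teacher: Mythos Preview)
The paper does not prove this theorem: it is stated with a citation to \cite[Theorem~2.19]{Hui} and used as a black box in the proof of Theorem~\ref{332}, with no proof environment following it. So there is no ``paper's own proof'' against which to compare your proposal.

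That said, your sketch is a faithful outline of the strategy actually carried out in \cite{Hui}: reduce equality of formal characters to the existence of a common maximal torus (Proposition~\ref{p201}), pass to the weight multiset in the character lattice, decompose each side into simple ideals and outer tensor products, and then run a case analysis on which simple types and highest weights can yield the same weight multiset up to lattice automorphism. You are also right that the exceptional set $\{1,2,3,4,5,7,8\}$ arises from low-rank coincidences (exceptional isomorphisms, small representations of exceptional groups, triality) and that for $A_4$ only the parity survives. Your closing caveat is apt: the genuine content of \cite[Theorem~2.19]{Hui} is the exhaustive bookkeeping showing that no further ambiguities occur for $A_n$ with $n\notin\{1,2,3,4,5,7,8\}$, and your sketch does not (and does not claim to) supply that. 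As a summary of the shape of the argument, and as an explanation of why the exceptional set looks the way it does, your proposal is accurate.
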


\begin{theorem}\label{332} The number of $A_n=\mathfrak{sl}_{n+1}$ factors for $n\in\N\backslash\{1,2,3,4,5,7,8\}$ and the parity of $A_4$ factors of $\bar{\mathbf{S}}_\ell\times_{\F_\ell}\bar{\F}_\ell$ are independent of $\ell$ if $\ell\gg1$.\end{theorem}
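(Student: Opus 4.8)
The plan is to deduce Theorem \ref{332} from the purely representation-theoretic Theorem \ref{331} together with Theorem \ref{321}(ii). The only obstruction to a one-line argument is that $\bar{\mathbf{S}}_\ell\times_{\F_\ell}\bar{\F}_\ell$ lives in characteristic $\ell$, whereas Theorem \ref{331} is a statement about semisimple Lie algebras over $\C$; so the first step is to attach to each $\bar{\mathbf{S}}_\ell$ (for $\ell\gg1$) a characteristic-zero avatar, using the finiteness of $\Z$-forms that was already exploited in the proof of Theorem \ref{redenv}(iii).

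Concretely, by \cite[Theorem 24]{EHK} (exactly as in the proof of Theorem \ref{redenv}(iii)) there is, for $\ell\gg1$, a split semisimple simply connected Chevalley scheme $\mathbf{H}_\ell$ over $\Z$ and a $\Z$-morphism $r_\ell\colon\mathbf{H}_\ell\to\GL_{N,\Z}$, lying in a finite set depending only on $N$, such that $r_\ell\times_\Z\bar{\F}_\ell$ is isomorphic as a representation to $\bar{\mathbf{S}}_\ell^{\sc}\to\bar{\mathbf{S}}_\ell\hookrightarrow\GL_{N,\bar{\F}_\ell}$. I would set $\mathbf{S}_{\C,\ell}:=r_\ell(\mathbf{H}_\ell\times_\Z\C)\subset\GL_{N,\C}$, a connected semisimple $\C$-group faithfully represented on $\C^N$, and then record two comparisons. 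First, the root datum of a Chevalley scheme is constant along its fibres and a central isogeny preserves the root system, so $\mathbf{S}_{\C,\ell}$, $\mathbf{H}_\ell\times\C$, $\mathbf{H}_\ell\times\bar{\F}_\ell$, $\bar{\mathbf{S}}_\ell^{\sc}\times\bar{\F}_\ell$ and $\bar{\mathbf{S}}_\ell\times\bar{\F}_\ell$ all carry the same root system; in particular $\mathbf{S}_{\C,\ell}$ and $\bar{\mathbf{S}}_\ell\times\bar{\F}_\ell$ have the same number of $A_n$-factors for every $n$. Second, fixing a split maximal torus $\mathbf{T}\subset\mathbf{H}_\ell$, the multiset of weights of $r_\ell|_{\mathbf{T}}$ in $X^*(\mathbf{T})$ is independent of the base field, so $r_\ell\times\C$ and $r_\ell\times\bar{\F}_\ell$ have the same formal character; by Proposition \ref{p201} the formal character is unchanged under replacing a group by its simply connected cover (the image torus in $\GL_N$ is the same), and by definition the formal character over $\bar{\F}_\ell$ agrees with that over $\F_\ell$. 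Hence the formal character of $\mathbf{S}_{\C,\ell}\hookrightarrow\GL_{N,\C}$ equals the formal character of $\bar{\mathbf{S}}_\ell\hookrightarrow\GL_{N,\F_\ell}$.

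Finally I would invoke Theorem \ref{321}(ii): the formal character of $\bar{\mathbf{S}}_\ell\hookrightarrow\GL_{N,\F_\ell}$, hence that of $\mathbf{S}_{\C,\ell}\hookrightarrow\GL_{N,\C}$, is one and the same $\mathfrak{c}$ for all $\ell\gg1$. Thus for $\ell,\ell'\gg1$ the complex semisimple Lie algebras $\operatorname{Lie}\mathbf{S}_{\C,\ell}$ and $\operatorname{Lie}\mathbf{S}_{\C,\ell'}$ carry faithful $N$-dimensional representations with the same formal character $\mathfrak{c}$, so by Theorem \ref{331} they have the same number of $A_n$-factors for $n\in\N\backslash\{1,2,3,4,5,7,8\}$ and the same parity of $A_4$-factors; transporting this back along the first comparison of the previous paragraph yields the assertion for $\bar{\mathbf{S}}_\ell\times_{\F_\ell}\bar{\F}_\ell$. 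The substantive point is the cross-characteristic bridge, and it is essentially free here because the finite list of $\Z$-forms was already produced in the construction of the algebraic envelope; the only care needed is the fibrewise constancy of the root datum and of the weight multiset, together with the harmless passage to the simply connected cover via Proposition \ref{p201}.
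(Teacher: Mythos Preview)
Your proposal is correct and follows essentially the same route as the paper: lift $\bar{\mathbf{S}}_\ell^{\sc}\times\bar{\F}_\ell\to\GL_{N,\bar{\F}_\ell}$ to a $\Z$-form $\mathbf{H}_{\ell,\Z}\to\GL_{N,\Z}$ via \cite[Theorem 24]{EHK}, observe that the $\C$-fibre has the same formal character as $\bar{\mathbf{S}}_\ell\hookrightarrow\GL_{N,\F_\ell}$, apply Theorem \ref{321}(ii) to make this formal character $\ell$-independent, then invoke Theorem \ref{331} on the complex Lie algebras and transport the conclusion back through the isogeny and the constancy of the root system along the fibres of the Chevalley scheme. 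Your justification of the two comparison steps (fibrewise constancy of root datum and of the weight multiset, and the harmlessness of passing to the simply connected cover for the formal character) is a bit more explicit than the paper's, but the argument is the same.
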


\begin{proof}
Let $\bar{\mathbf{S}}_\ell^{\sc}\rightarrow \bar{\mathbf{S}}_\ell$ be the simply connected cover of the semisimple $\bar{\mathbf{S}}_\ell$ for $\ell\gg1$. 
Then the representation $(\bar{\mathbf{S}}_\ell^{\sc}\rightarrow \bar{\mathbf{S}}_\ell\hookrightarrow \mathrm{GL}_{N,\mathbb{F}_\ell})\times\bar\F_\ell$ can be lifted to a representation of a simply connected Chevalley scheme $\mathbf{H}_{\ell,\Z}$ defined over $\mathbb{Z}$ for $\ell\gg1$ \cite[Theorem 24]{EHK}
\begin{equation*}
\pi_{\ell,\Z}: \mathbf{H}_{\ell,\Z} \rightarrow \GL_{N,\Z}
\end{equation*}
which is also a $\Z$-form of a representation of simply connected $\C$-semisimple group $\mathbf{H}_{\ell,\C}$ \cite{Steinbk}
\begin{equation*}
\pi_{\ell,\C}: \mathbf{H}_{\ell,\C} \rightarrow \GL_{N,\C}.
\end{equation*}
 Hence, $\bar{\mathbf{S}}_\ell\subset \mathrm{GL}_{N,\mathbb{F}_\ell}$ and $\pi_{\ell,\C}(\mathbf{H}_{\ell, \C})\subset \GL_{N,\C}$ have the same formal character for $\ell\gg1$. This and Theorem \ref{321} imply the formal character of $\pi_{\ell,\C}(\mathbf{H}_{\ell, \C})\subset \GL_{N,\C}$ is independent of $\ell$ when $\ell$ is sufficiently large. This in turn implies the formal character of $\mathrm{Lie}(\pi_{\ell,\C}(\mathbf{H}_{\ell, \C}))\hookrightarrow\End(\C^N)$ (see \cite[$\mathsection2.1$]{Hui}) is independent of $\ell$ when $\ell$ is sufficiently large. Therefore, the number of $A_n$ factors for $n\in\N\backslash\{1,2,3,4,5,7,8\}$ and the parity of $A_4$ factors of $\pi_{\ell,\C}(\mathbf{H}_{\ell, \C})$ and hence $\mathbf{H}_{\ell, \C}$ (the homomorphism $\mathbf{H}_{\ell, \C}\rightarrow \pi_{\ell,\C}(\mathbf{H}_{\ell, \C})$ is an isogeny since $\bar{\mathbf{S}}_\ell^{\sc}\rightarrow \bar{\mathbf{S}}_\ell$ is an isogeny) are independent of $\ell$ for $\ell\gg1$ by Theorem \ref{331}. Since the number of simple factors of each type of $\bar{\mathbf{S}}_\ell^{\sc}\times\bar{\F}_\ell$ and $\mathbf{H}_{\ell,\mathbb{C}}$ are equal, we are done.\end{proof}
\vspace{.1in}

Let $\mathfrak g$ be a simple Lie type (e.g., $A_n,B_n,C_n,D_n,...$) and $\bar\Gamma$ a finite group. Suppose $\ell\geq 5$. We measure the number of $\mathfrak g$-type simple factors of characteristic $\ell$ and the total number of Lie type simple factors of characteristic $\ell$ in the set of composition factors of $\bar\Gamma$ in the following sense: 
Let $\mathbb{F}_q$ be a finite field of characteristic $\ell$, $\sigma$ the Frobenius automorphism of $\bar{\mathbb{F}}_q/\mathbb{F}_q$, and $\bar{\mathbf{G}}$ a connected $\mathbb{F}_q$-group which is almost simple over $\bar\F_q$. 
The identification of $\bar{\mathbf{G}}_\sigma:=\bar{\mathbf{G}}(\mathbb{F}_q)$ is related to $\mathfrak{g}$, the simple type of $\bar{\mathbf{G}}\times_{\F_q}\bar\F_q$ \cite[11.6]{Stein}:

\begin{center}
\begin{tabular}{c|c} 
Type of $\bar{\mathbf{G}}$  & Composition factors of $\bar{\mathbf{G}}(\mathbb{F}_q)$ \\ \hline

$A_1$  & $A_1(q)=\PSL_2(q)$  + cyclic groups \\ 

$A_n$ ($n\geq 2$) & $A_n(q)$ or ${}^2\!A_n(q^{2})$ + cyclic groups \\ 

$B_n$ ($n\geq 2$) & $B_n(q)$ + cyclic groups \\

$C_n$ ($n\geq 3$) & $C_n(q)$  + cyclic groups \\ 

$D_4$  & $D_4(q)$ or ${}^2\!D_4(q^{2})$ or ${}^3\!D_4(q^{3})$ + cyclic groups \\ 

$D_n$ ($n\geq 5$) & $D_n(q)$ or ${}^2\!D_n(q^{2})$ + cyclic groups \\ 

$E_6$  & $E_6(q)$ or ${}^2\!E_6(q^{2})$ + cyclic groups \\ 

$E_7$  & $E_7(q)$  + cyclic groups \\ 

$E_8$  & $E_8(q)$  + cyclic groups \\ 

$F_4$  & $F_4(q)$  + cyclic groups \\ 

$G_2$  & $G_2(q)$  + cyclic groups \\ 
\end{tabular}
\end{center}

$\bar{\mathbf{G}}(\mathbb{F}_q)$ has only one non-cyclic composition factor which is either a Chevalley group or a Steinberg group of  type $\mathfrak{g}$. For example, the non-cyclic composition factor is $A_n(q)$ or ${}^2\!A_n(q^{2})$ if $\mathfrak{g}=A_n$ and $n\geq2$. For any semisimple algebraic group $\mathbf{H}/F$ and complex semisimple Lie algebra $\mathfrak{h}$, denote by $\rk\mathbf{H}$ and $\rk\mathfrak{h}$ respectively the rank of $\mathbf{H}/\bar F$ and the rank of $\mathfrak{h}$.

\begin{definition}\label{rank} Suppose $\ell\geq 5$ is a prime number and $q=\ell^f$. Let $\bar\Gamma$ be a finite simple group of Lie type (of characteristic $\ell$) in the above table and $\mathfrak g$ the simple Lie type of the corresponding $\bar{\mathbf{G}}$. We define \emph{the $\mathfrak g$-type $\ell$-rank} of $\bar\Gamma$ to be 
\begin{equation*}
\mathrm{rk}_\ell^{\mathfrak g}\bar\Gamma:= \left\{ \begin{array}{lll}
 f\cdot\rk\mathfrak{g}& \mathrm{if} ~\bar\Gamma~ \mathrm{is~associated~with} ~\mathfrak{g} ~\mathrm{in~the~above~table},\\
 0 & \mathrm{otherwise.}
\end{array}\right.
\end{equation*}
For finite simple group $\bar\Gamma'$ not in the table, $\mathrm{rk}_\ell^{\mathfrak g}\bar\Gamma'$ is defined to be $0$ for any $\mathfrak g$. We extend this definition
to arbitrary finite groups by defining the $\mathfrak g$-type $\ell$-rank of any finite group to be the sum of the $\mathfrak g$-type $\ell$-ranks of its composition factors. \emph{The total $\ell$-rank} of a finite group $\bar\Gamma$ is defined to be 
\begin{equation*}
\mathrm{rk}_\ell\bar\Gamma:=\sum_{\mathfrak g}\mathrm{rk}_\ell^{\mathfrak g}\bar\Gamma.
\end{equation*}
\end{definition}

\begin{remark}\label{333}
The definition of $\mathfrak{g}$-type $\ell$-rank is equivalent to the following.
For any  finite simple group $\bar\Gamma$ of Lie type of characteristic $\ell$, we have 
$$\bar\Gamma=\bar{\mathbf{G}}(\F_{\ell^{f'}})^{\der}$$
for some adjoint simple group $\bar{\mathbf{G}}/\F_{\ell^{f'}}$
so that 
$$\bar{\mathbf{G}}\times_{\F_{\ell^{f'}}}\bar\F_\ell=\prod^m\bar{\mathbf{H}},$$
where $\bar{\mathbf{H}}$
is an $\bar\F_\ell$-adjoint simple group of some Lie type $\mathfrak{h}$.
We then set the $\mathfrak{g}$-type $\ell$-rank of $\bar\Gamma$ to be
\begin{equation*}
\rk^{\mathfrak{g}}_\ell\bar\Gamma :=\left\{ \begin{array}{lll}
 f'\cdot\rk \bar{\mathbf{G}} &\mbox{if}\hspace{.1in} \mathfrak{g}=\mathfrak{h}.\\
 0 &\mbox{otherwise.}
\end{array}\right.
\end{equation*}
We extend this definition
to arbitrary finite groups by defining the $\mathfrak g$-type $\ell$-rank of any finite group to be the sum of the $\mathfrak g$-type $\ell$-ranks of its composition factors.\\
\end{remark}

Let $\bar{\mathbf{G}}$ be a connected semisimple algebraic group over $\mathbb{F}_q$ and $\pi:\bar{\mathbf{G}}^{\sc}\rightarrow \bar{\mathbf{G}}$ the simply-connected cover of $\bar{\mathbf{G}}$. Simply-connected $\bar{\mathbf{G}}^{\sc}$ and isogeny $\pi$ are defined over $\mathbb{F}_q$ \cite[9.16]{Stein}. Group $\bar{\mathbf{G}}^{\sc}$ is a direct product of $\F_q$-simple, simply-connected semisimple groups $\bar{\mathbf{G}}_i^{\sc}$  \cite[Chapter 10 $\mathsection 1.3$]{CF}:
\begin{equation*}
\bar{\mathbf{G}}_1^{\sc} \times \bar{\mathbf{G}}_2^{\sc} \times\cdots\times \bar{\mathbf{G}}_k^{\sc} \stackrel{\mathbb{F}_q\cong}{\longrightarrow} \bar{\mathbf{G}}^{\sc}.
\end{equation*}
For each $\bar{\mathbf{G}}_i^{\sc}$, there exist an integer $m_i$ and an algebraic group $\bar{\mathbf{H}}_i^{\sc}$ defined over $\F_{q^{m_i}}$ such that $\bar{\mathbf{H}}_i^{\sc}\times_{\F_{q^{m_i}}}\bar\F_q$ is almost simple and
$$\bar{\mathbf{G}}_i^{\sc}\times_{\F_q}\F_{q^{m_i}}=\prod^{m_i}\bar{\mathbf{H}}_i^{\sc}.$$
We have \cite[Chapter 10 $\mathsection 1.3$]{CF}
\begin{equation*}
\bar{\mathbf{G}}_i^{\sc} = \mathrm{Res}_{\mathbb{F}_{q^{m_i}}/\mathbb{F}_{q}}(\bar{\mathbf{H}}_i^{\sc})
\end{equation*}
so that
\begin{equation*}
\bar{\mathbf{G}}_i^{\sc}(\mathbb{F}_q) = \bar{\mathbf{H}}_i^{\sc}(\mathbb{F}_{q^{m_i}}).
\end{equation*}
The following proposition relates $\rk_\ell^\mathfrak{g}\bar{\mathbf{G}}(\F_q)$ and $\rk_\ell\bar{\mathbf{G}}(\F_q)$ to  $\bar{\mathbf{G}}\times_{\F_q}\bar{\F}_q$.

\begin{proposition}\label{334} Let $\ell\geq5$ be a prime and $\bar{\mathbf{G}}$ a connected semisimple algebraic group over $\mathbb{F}_q$, where $q=\ell^f$. The composition factors of $\bar{\mathbf{G}}(\mathbb{F}_q)$ are cyclic groups and finite simple groups of Lie type of characteristic $\ell$. Moreover, let $m$ be the number of almost simple factors of $\bar{\mathbf{G}}\times_{\F_q}\bar{\F}_q$ of simple type $\mathfrak{g}$.
Then, 
$$\rk_\ell^\mathfrak{g}\bar{\mathbf{G}}(\F_q)=mf\cdot\rk\mathfrak{g}\hspace{.2in}\mathrm{and}\hspace{.2in}\rk_\ell\bar{\mathbf{G}}(\F_q)=f\cdot\rk\bar{\mathbf{G}}.$$
\end{proposition}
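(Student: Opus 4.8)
The plan is to reduce everything to the simply-connected cover, where the structure recalled just above the proposition applies directly, and then to keep track of the fact that every discrepancy created along the way is abelian, hence invisible both to $\rk_\ell^{\mathfrak g}$ and to $\rk_\ell$ (which are additive over composition factors and vanish on cyclic groups).

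First I would pass from $\bar{\mathbf{G}}$ to $\pi\colon\bar{\mathbf{G}}^{\sc}\to\bar{\mathbf{G}}$, an $\mathbb{F}_q$-isogeny with central kernel $Z$ (a finite commutative $\mathbb{F}_q$-group scheme). Taking $\mathbb{F}_q$-points yields an exact sequence $1\to Z(\mathbb{F}_q)\to\bar{\mathbf{G}}^{\sc}(\mathbb{F}_q)\to\bar{\mathbf{G}}(\mathbb{F}_q)\to H^1(\mathbb{F}_q,Z)$; since $\mathbb{F}_q$ has cohomological dimension $1$ and $Z$ is finite, $H^1(\mathbb{F}_q,Z)$ is a finite abelian group. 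Hence $\pi(\bar{\mathbf{G}}^{\sc}(\mathbb{F}_q))$ is a normal subgroup of $\bar{\mathbf{G}}(\mathbb{F}_q)$ with abelian quotient, and it is itself a central quotient of $\bar{\mathbf{G}}^{\sc}(\mathbb{F}_q)$; therefore $\bar{\mathbf{G}}(\mathbb{F}_q)$ and $\bar{\mathbf{G}}^{\sc}(\mathbb{F}_q)$ have the same non-cyclic composition factors, with multiplicity. Using the $\mathbb{F}_q$-decomposition $\bar{\mathbf{G}}^{\sc}=\prod_{i=1}^{k}\bar{\mathbf{G}}_i^{\sc}$ together with $\bar{\mathbf{G}}_i^{\sc}=\mathrm{Res}_{\mathbb{F}_{q^{m_i}}/\mathbb{F}_q}(\bar{\mathbf{H}}_i^{\sc})$ and $\bar{\mathbf{G}}_i^{\sc}(\mathbb{F}_q)=\bar{\mathbf{H}}_i^{\sc}(\mathbb{F}_{q^{m_i}})$ recalled above, one gets $\bar{\mathbf{G}}^{\sc}(\mathbb{F}_q)=\prod_{i=1}^{k}\bar{\mathbf{H}}_i^{\sc}(\mathbb{F}_{q^{m_i}})$. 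Because $q^{m_i}=\ell^{fm_i}$ with $\ell\ge 5$, none of the finitely many small exceptional cases occurs, so by Tits' simplicity theorem each $\bar{\mathbf{H}}_i^{\sc}(\mathbb{F}_{q^{m_i}})$ is perfect, the kernel of its map to $\bar{\mathbf{H}}_i^{\ad}(\mathbb{F}_{q^{m_i}})$ is central (hence abelian), and its unique non-cyclic composition factor $\bar\Gamma_i$ is a finite simple group of Lie type of characteristic $\ell$, namely $\bar\Gamma_i=\bar{\mathbf{H}}_i^{\ad}(\mathbb{F}_{q^{m_i}})^{\der}$. This already gives the assertion on composition factors of $\bar{\mathbf{G}}(\mathbb{F}_q)$.

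For the rank identities, additivity of $\rk_\ell^{\mathfrak g}$ over composition factors and its vanishing on cyclic groups give $\rk_\ell^{\mathfrak g}\bar{\mathbf{G}}(\mathbb{F}_q)=\sum_{i=1}^{k}\rk_\ell^{\mathfrak g}\bar\Gamma_i$. Here $\bar{\mathbf{H}}_i^{\ad}$ is adjoint, absolutely almost simple of some type $\mathfrak h_i$, over the field $\mathbb{F}_{q^{m_i}}=\mathbb{F}_{\ell^{fm_i}}$, and $\bar{\mathbf{H}}_i^{\ad}\times_{\mathbb{F}_{q^{m_i}}}\bar{\mathbb{F}}_\ell$ is a single almost-simple factor of type $\mathfrak h_i$, so Remark \ref{333} yields $\rk_\ell^{\mathfrak g}\bar\Gamma_i=fm_i\cdot\rk\mathfrak g$ if $\mathfrak g=\mathfrak h_i$ and $0$ otherwise. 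On the other side, $\bar{\mathbf{G}}\times_{\mathbb{F}_q}\bar{\mathbb{F}}_q$ is, up to a central isogeny that changes neither types nor ranks, isomorphic to $\prod_i\big(\mathrm{Res}_{\mathbb{F}_{q^{m_i}}/\mathbb{F}_q}\bar{\mathbf{H}}_i^{\ad}\big)\times_{\mathbb{F}_q}\bar{\mathbb{F}}_q\cong\prod_i\prod^{m_i}\big(\bar{\mathbf{H}}_i^{\ad}\times\bar{\mathbb{F}}_q\big)$, so the number of its almost-simple factors of type $\mathfrak g$ is $m=\sum_{i\,:\,\mathfrak h_i=\mathfrak g}m_i$ and $\rk\bar{\mathbf{G}}=\sum_i m_i\rk\mathfrak h_i$. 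Combining, $\rk_\ell^{\mathfrak g}\bar{\mathbf{G}}(\mathbb{F}_q)=\sum_{i\,:\,\mathfrak h_i=\mathfrak g}fm_i\rk\mathfrak g=fm\cdot\rk\mathfrak g$, and summing over all Lie types $\mathfrak g$ gives $\rk_\ell\bar{\mathbf{G}}(\mathbb{F}_q)=f\sum_i m_i\rk\mathfrak h_i=f\cdot\rk\bar{\mathbf{G}}$. The only genuinely delicate points are that both the kernel and the cokernel of $\bar{\mathbf{G}}^{\sc}(\mathbb{F}_q)\to\bar{\mathbf{G}}(\mathbb{F}_q)$ must be verified to be abelian, and that Tits' simplicity theorem must be invoked in the correct generality; both rely on $\ell\ge 5$ to rule out the small degenerate groups of Lie type, and everything else is bookkeeping with restriction of scalars and the definitions in \ref{rank} and \ref{333}.
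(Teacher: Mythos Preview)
Your argument is correct and follows essentially the same route as the paper: pass to the simply connected cover, use that the kernel and cokernel of $\bar{\mathbf{G}}^{\sc}(\mathbb{F}_q)\to\bar{\mathbf{G}}(\mathbb{F}_q)$ are abelian (the paper cites Steinberg \cite[12.6]{Stein} directly rather than the Galois-cohomology sequence you use, but the content is the same), decompose via restriction of scalars, and compute with the definition of $\rk_\ell^{\mathfrak g}$. One small inaccuracy in your closing remarks: the abelianness of the kernel and cokernel does \emph{not} depend on $\ell\ge 5$---$Z(\mathbb{F}_q)$ and $H^1(\mathbb{F}_q,Z)$ are abelian for any $\ell$ since $Z$ is commutative---so the hypothesis $\ell\ge 5$ is used only to ensure that each $\bar{\mathbf{H}}_i^{\sc}(\mathbb{F}_{q^{m_i}})$ has a unique non-cyclic composition factor which is genuinely simple.
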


\begin{proof} Since the kernel and the cokernel of $\pi:\bar{\mathbf{G}}^{\sc}(\mathbb{F}_q)\rightarrow \bar{\mathbf{G}}(\mathbb{F}_q)$ are both abelian \cite[12.6]{Stein},  the composition factors of $\bar{\mathbf{G}}(\mathbb{F}_q)$ and $\prod_{i=1}^k\bar{\mathbf{H}}_i^{\sc}(\mathbb{F}_{q^{m_i}})$ defined above are identical modulo cyclic groups.
Hence,
the composition factors of $\bar{\mathbf{G}}(\mathbb{F}_q)$ are cyclic groups and finite simple groups of Lie type of characteristic $\ell$ by the table.
Let \begin{equation*}
\{\bar{\mathbf{H}}_1^{\sc},\bar{\mathbf{H}}_2^{\sc},...,\bar{\mathbf{H}}_j^{\sc}\}
\end{equation*}
be the subset of $\{\bar{\mathbf{H}}_1^{\sc},...,\bar{\mathbf{H}}_k^{\sc}\}$ of type $\mathfrak{g}$. The equation 
$$m_1+m_2+\cdots+ m_j=m$$
follows immediately from the fact that each $\bar{\mathbf{G}}_i^{\sc}$ is a direct product of $m_i$ copies of $\bar{\mathbf{H}}_i^{\sc}$ over $\bar\F_q$. Since $\bar{\mathbf{H}}_i^{\sc}$ is almost simple over $\bar\F_q$, we obtain by Definition \ref{rank} that
the $\mathfrak{g}$-type $\ell$-rank
$$\rk_\ell^\mathfrak{g}\bar{\mathbf{G}}(\F_q)= \sum_{i=1}^k \rk_\ell^\mathfrak{g}\bar{\mathbf{H}}_i^{\sc}(\mathbb{F}_{q^{m_i}}) =\sum_{i=1}^j  m_if\cdot\rk\mathfrak{g}=mf\cdot\rk\mathfrak{g}.$$
and therefore the total $\ell$-rank 
$$\rk_\ell\bar{\mathbf{G}}(\F_q)=f\cdot\rk\bar{\mathbf{G}}.$$\end{proof}

We can now prove our main results.

\begin{customthm}{A}(Main Theorem)  \textit{Let $K$ be a number field and $\{\phi_\ell:\mathrm{Gal}_K\rightarrow\mathrm{GL}_N(\mathbb{F}_\ell)\}_{\ell\in\mathscr{P}}$ a strictly
compatible system of mod $\ell$ Galois representations  arising from \'etale cohomology (Definition \ref{arise},\ref{comsys}). There exists a finite normal extension $L$ of $K$ such that if we denote $\phi_\ell(\mathrm{Gal}_K)$ and $\phi_\ell(\mathrm{Gal}_L)$ by respectively $\bar\Gamma_\ell$ and $\bar{\gamma}_\ell$ for all $\ell$, and let $\bar{\mathbf{S}}_\ell\subset\GL_{N,\F_\ell}$ be the connected $\mathbb{F}_\ell$-semisimple subgroup associated to $\bar{\gamma}_\ell$ (or $\bar\Gamma_\ell$) by Nori's theory for $\ell\gg1$, then the following hold for $\ell\gg1:$
\begin{enumerate}
\item[(i)] The formal character of $\bar{\mathbf{S}}_\ell\hookrightarrow \mathrm{GL}_{N,\mathbb{F}_\ell}$ is independent of $\ell$ (Definition \ref{fc}') and is equal to the formal character of  $(\mathbf{G}_\ell^\circ)^{\mathrm{der}}\hookrightarrow \mathrm{GL}_{N,\Q_\ell}$, where $(\mathbf{G}_\ell^\circ)^{\mathrm{der}}$ is the derived group of the identity component of $\mathbf{G}_\ell$, the algebraic monodromy group of the semi-simplified representation $\Phi_\ell^{\ss}$.
\item[(ii)] The composition factors of $\bar{\gamma}_\ell$ and $\bar{\mathbf{S}}_\ell(\mathbb{F}_\ell)$ are identical modulo cyclic groups. Therefore, the composition factors of $\bar{\gamma}_\ell$ are finite simple groups of Lie type of characteristic $\ell$ and cyclic groups. 
\end{enumerate}}
\end{customthm}

\begin{proof} By Proposition \ref{N2}(i), $\bar{\mathbf{S}}_\ell\subset\GL_{N,\F_\ell}$ 
is a connected $\mathbb{F}_\ell$-semisimple subgroup for $\ell\gg1$. 
Part (i) is proved by Theorem \ref{321}. Since there is a finite normal extension $L/K$  
such that $\bar{\gamma}_\ell:=\phi_\ell(\mathrm{Gal}_L)$ is a subgroup of $\bar{\mathbf{G}}_\ell(\mathbb{F}_\ell)$ 
of uniform bounded index by Theorem \ref{redenv} and $\bar{\mathbf{S}}_\ell$ 
is the derived group of $\bar{\mathbf{G}}_\ell$, the composition factors of 
$\bar{\gamma}_\ell$ and $\bar{\gamma}_\ell\cap \bar{\mathbf{S}}_\ell(\mathbb{F}_\ell)$ 
are identical modulo cyclic groups. 
Together with $\bar{\mathbf{S}}_\ell(\mathbb{F}_\ell)/\bar{\mathbf{S}}_\ell(\mathbb{F}_\ell)^+$ abelian and normal series
\begin{equation*}
  \bar{\mathbf{S}}_\ell(\mathbb{F}_\ell)^+=\bar{\gamma}_\ell^+ \hspace{.05in} \triangleleft\hspace{.05in}  \bar{\gamma}_\ell\cap \bar{\mathbf{S}}_\ell(\mathbb{F}_\ell)\hspace{.05in} \triangleleft \hspace{.05in}  \bar{\mathbf{S}}_\ell(\mathbb{F}_\ell)
\end{equation*}
for $\ell\gg1$ by Theorem \ref{N1} and Remark \ref{ind}, we conclude that the composition factors of $\bar{\gamma}_\ell$ and $\bar{\mathbf{S}}_\ell(\F_\ell)$ are identical modulo cyclic groups. Since Proposition \ref{334} implies the non-cyclic composition factors of $\bar{\mathbf{S}}_\ell(\mathbb{F}_\ell)$ are finite simple groups of Lie type of characteristic $\ell$, we obtain (ii).\end{proof}

\begin{customcor}{B} \textit{Let $\bar{\mathbf{S}}_\ell$ be defined as above, then the following hold for $\ell\gg1:$
\begin{enumerate} 
\item[(i)] The total $\ell$-rank $\rk_\ell\bar{\Gamma}_\ell$ of $\bar\Gamma_\ell$ (Definition \ref{rank}) is equal to the rank of $\bar{\mathbf{S}}_\ell$ and is therefore independent of $\ell$.
\item[(ii)] The $A_n$-type $\ell$-rank $\rk_\ell^{A_n}\bar{\Gamma}_\ell$ of $\bar\Gamma_\ell$ (Definition \ref{rank}) for $n\in\N\backslash\{1,2,3,4,5,7,8\}$ and the parity of 
 $(\rk_\ell^{A_4}\bar{\Gamma}_\ell)/4$ are independent of $\ell$. 
\end{enumerate}}
\end{customcor}

\begin{proof} Since $\bar\gamma_\ell$ is a normal subgroup of $\bar\Gamma_\ell$ of index bounded by $[L:K]$,
they have equal total $\ell$-rank and $\mathfrak{g}$-type $\ell$-rank for all sufficiently large $\ell$. It suffices to prove
(i) and (ii) for $\bar\gamma_\ell$.
Since (i) is a direct consequence of Proposition \ref{334} and Theorem \ref{main} and (ii) follows easily from Theorem \ref{332}, Proposition \ref{334}, and Theorem \ref{main}, we are done.\end{proof}

\section*{Acknowledgments} It is my great pleasure to acknowledge my adviser, Michael Larsen, for useful conversations during the course of the work and helpful comments on an earlier preprint of this project. 
I am also grateful to the anonymous referee for many helpful comments and suggestions, which largely improved the exposition of the paper.

\end{document}